\newtheorem{theo}{Theorem}
\newtheorem{defi}[theo]{Definition}
\newtheorem{lemm}[theo]{Lemma}
\newtheorem{coro}[theo]{Corollary}
\newtheorem{prop}[theo]{Proposition}
\numberwithin{theo}{section} 
\numberwithin{equation}{section}
\newcommand{\C}{\mathbb{C}}
\newcommand{\N}{\mathbb{N}}
\newcommand{\Norm}{\operatorname{Norm}}
\newcommand{\Span}{\operatorname{span}}
\newcommand{\sgn}{\operatorname{sgn}}
\newcommand{\diag}{\operatorname{diag}}
\begin{document}

\begin{center}
{\Large \bf
Bases for infinite dimensional simple $\mathfrak{osp}(1|2n)$-modules respecting the branching $\mathfrak{osp}(1|2n)\supset \mathfrak{gl}(n)$
}
\vskip 1cm
{Asmus K. BISBO~$^\dag$
and Joris VAN DER JEUGT~$^\dag$}
\end{center}
\vskip 1cm
{$^\dag$~Department of Applied Mathematics, Computer Science and Statistics, Ghent University, Krijgs\-laan 281-S9, B-9000 Gent, Belgium}\\
{\href{mailto:Asmus.Bisbo@UGent.be}{Asmus.Bisbo@UGent.be}, 
\href{mailto:Joris.VanderJeugt@UGent.be}{Joris.VanderJeugt@UGent.be}
} 

\begin{abstract}
We study the effects of the branching $\mathfrak{osp}(1|2n)\supset \mathfrak{gl}(n)$ on a particular class of simple infinite-dimensional $\mathfrak{osp}(1|2n)$-modules $L(p)$ characterized by a positive integer $p$.
In the first part we use combinatorial methods such as Young tableaux and Young subgroups to construct a new basis for $L(p)$ that respects this branching and we express the basis elements explicitly in two distinct ways. First as monomials of negative root vectors of $\mathfrak{gl}(n)$ acting on the $\mathfrak{gl}(n)$-highest weight vectors in $L(p)$ and then as polynomials in the generators of $\mathfrak{osp}(1|2n)$ acting on the $\mathfrak{osp}(1|2n)$-lowest weight vector in $L(p)$.
In the second part we use extremal projectors and the theory of Mickelsson-Zhelobenko algebras to give new explicit constructions of raising and lowering operators related to the branching $\mathfrak{osp}(1|2n)\supset \mathfrak{gl}(n)$. We use the raising operators to give new expressions for the elements of the Gel'fand-Zetlin basis for $L(p)$ as monomials of operators from $U(\mathfrak{osp}(1|2n))$ acting on the $\mathfrak{osp}(1|2n)$-lowest weight vector in $L(p)$. 
We observe that the Gel'fand-Zetlin basis for $L(p)$ is related to the basis constructed earlier in the paper by a triangular transition matrix. We end the paper with a detailed example treating the case $n=3$.
	\end{abstract}

\section{Introduction}
\label{sec1}

In both the finite- and infinite-dimensional representation theory of Lie superalgebras many questions remain open.
In the particular case of infinite-dimensional modules of the orthosymplectic Lie superalgebra $\mathfrak{osp}(1|2n)$, where we have classification theorems \cite{Dobrev-Salom-2017,Dobrev-Zhang-2005} and some knowledge of character formulas \cite{Cheng-Zhang-2004}, there is only one class of simple modules that has been studied in detail. These are the simple lowest weight modules $L(p)$ of lowest weight $(\frac{p}{2},\dots,\frac{p}{2})$, for $p\in \N$, sometimes referred to as the paraboson Fock spaces. 
Character formulas and Gel'fand-Zetlin bases related to the branching $\mathfrak{osp}(1|2n)\supset \mathfrak{gl}(n)\supset \cdots \supset \mathfrak{gl}(1)$ were obtained for the modules $L(p)$ in \cite{Lievens-Stoilova-VanderJeugt-2008}. Currently no expressions are known for the elements of these Gel'fand-Zetlin bases in terms of polynomials of operators in $U(\mathfrak{osp}(1|2n))$ acting on the lowest weight vector of $L(p)$ (i.e.\ on the vacuum state of the paraboson Fock space). 
The search for a basis for $L(p)$ whose elements are given as polynomials of operators acting on a lowest weight vector has led to the discovery of the only other known basis for $L(p)$, see \cite{Bisbo-DeBie-VanderJeugt-2021}. 

The goal of this paper is two-fold. 
In the first part of the paper we identify the $\mathfrak{gl}(n)$-highest weight vectors of $L(p)$ and use this information to construct a new basis for $L(p)$ related to the branching $\mathfrak{osp}(1|2n)\supset \mathfrak{gl}(n)$. This basis restricts to a monomial basis on each of the simple $\mathfrak{gl}(n)$-submodules of $L(p)$. In \cite{Molev-Yakimova-2018}, such monomial bases were considered as PBW-parametrizations for various bases of simple $\mathfrak{gl}(n)$-modules. 
We proceed to construct a second expression for the elements of this new basis for $L(p)$ using Young subgroups to write each basis element as a polynomial in the generators of $\mathfrak{osp}(1|2n)$ acting on the lowest weight vector of $L(p)$.
In the second part of the paper we use the theory of extremal projectors to construct raising and lowering operators that act on the space of $\mathfrak{gl}(n)$-highest weight vectors of $L(p)$. These operators generate the Mickelsson-Zhelobenko algebra $Z(\mathfrak{osp}(1|2n),\mathfrak{gl}(n))$. 
Using these operators we obtain results regarding the action of $\mathfrak{osp}(1|2n)$ on the new basis for $L(p)$. Additionally, we apply these raising and lowering operators to obtain operator expressions for elements of the Gel'fand-Zetlin basis for $L(p)$. We end this part of the paper with the observation that the new basis for $L(p)$ constructed in this paper and the Gel'fand-Zetlin basis for $L(p)$ are related by a triangular transition matrix.
Throughout the paper we point out relevant connections to the theory of parabosons.

The paper is organized as follows. In Section \ref{sec2} we give an initial overview of the Lie superalgebra $\mathfrak{osp}(1|2n)$ and its subalgebra $\mathfrak{gl}(n)$. We present the classes of simple infinite-dimensional $\mathfrak{osp}(1|2n)$-modules $L(p)$ and simple finite-dimensional $\mathfrak{gl}(n)$-modules $V(\lambda+\frac{p}{2})$ that we will be working with throughout the paper. At the end of the section we introduce bases for the modules $V(\lambda+\frac{p}{2})$ and use them to construct a novel basis for $L(p)$. In Section \ref{sec3} we use combinatorial methods to obtain two distinct expressions for the elements of the basis for $L(p)$ which we introduced in Section~\ref{sec2}.
In Section \ref{sec4} we take a more general look at the branching $\mathfrak{osp}(1|2n)\supset \mathfrak{gl}(n)$ and construct novel expressions for the raising and lowering operators which generate the Mickelsson-Zhelobenko algebra $Z(\mathfrak{osp}(1|2n), \mathfrak{gl}(n))$ and which can be used to study the $\mathfrak{gl}(n)$-highest weight vectors in any given $\mathfrak{osp}(1|2n)$-module. We apply the properties of the Mickelsson-Zhelobenko algebra to the module $L(p)$ and use the raising operators to give new expressions for the elements of the Gel'fand-Zetlin basis for $L(p)$, relating this basis to the one we constructed in Section \ref{sec2} in the process.
In Section \ref{sec5} we provide a detailed treatment of the case $n=3$ while emphasizing connections to parabosons. Here we illustrate explicitly the main results of the paper and provide further interesting results. This includes the calculation of the matrix elements of the action of $\mathfrak{osp}(1|6)$ on the $\mathfrak{gl}(3)$-highest weight weight vectors in $L(p)$ and the explicit calculation of the transition matrix between the Gel'fand-Zetlin basis for $L(p)$ and the basis constructed in Section \ref{sec2}. 
With this transition matrix we are able to explicitly express the Gel'fand-Zetlin basis states of the paraboson Fock space $L(p)$ as polynomials in the parabosonic creation operators acting on the vacuum state.

\section{The Lie superalgebra $\mathfrak{osp}(1|2n)$ and its subalgebra $\mathfrak{gl}(n)$}
\label{sec2}

In this section we present the preliminary details regarding the Lie superalgebra $\mathfrak{osp}(1|2n)$ and its subalgebra $\mathfrak{gl}(n)$. This includes definitions of the algebras in terms of generators and relations, root systems and simple modules. We end the section with an initial discussion of bases for the simple $\mathfrak{osp}(1|2n)$-modules we will be studying.

\subsection{Definitions and root systems}
\label{sec2.1}

When it was introduced by Kac in \cite{Kac-1977}, the Lie superalgebra $\mathfrak{osp}(1|2n)$ was defined as a matrix algebra. For our purposes it will be more useful to instead use the equivalent definition in terms of generators and relations given by Ganchev and Palev in \cite{Ganchev-Palev-1980}.

\begin{theo}
The Lie superalgebra $\mathfrak{osp}(1|2n)$ is the superalgebra over $\C$ generated by the $2n$ odd elements $B_j^+$ and $B_j^-$, for $j\in\{1,\dots,n\}$, satisfying the relations
\begin{equation}
\label{sec2_eq_relations-osp}
[\{B_i^\xi,B_j^\eta\},B_l^\epsilon]=(\epsilon-\eta)\delta_{jl}B_i^\xi+ (\epsilon-\xi)\delta_{il}B_j^\eta,
	\end{equation}
for $i,j,l\in \{1,\dots,n\}$ and $\eta,\epsilon,\xi\in\{+,-\}$. Here $\pm$ is to be interpreted as $\pm 1$ in the algebraic expressions $\epsilon-\eta$ and $\epsilon-\xi$. Additionally the notations $[\cdot,\cdot]$ and $\{\cdot,\cdot\}$ are used for the commutator and anticommutator brackets.
	\end{theo}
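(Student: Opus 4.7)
The plan is to identify the Ganchev--Palev presentation with Kac's original matrix realization of $\mathfrak{osp}(1|2n)$. I would begin by recalling that in Kac's construction $\mathfrak{osp}(1|2n)$ is the Lie superalgebra of operators on $\C^{1|2n}$ preserving a non-degenerate supersymmetric bilinear form whose even component is symplectic. Its even subalgebra is $\mathfrak{sp}(2n)$, of dimension $n(2n+1)$, and its odd part is the $2n$-dimensional natural $\mathfrak{sp}(2n)$-module, so that $\dim \mathfrak{osp}(1|2n) = 2n^{2}+3n$.

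First I would exhibit $2n$ explicit odd matrices $B_j^{+},B_j^{-}$ that form a basis of the odd part in Kac's realization and verify by a direct matrix computation that they satisfy \eqref{sec2_eq_relations-osp}. Since the anticommutator of two odd elements of $\mathfrak{osp}(1|2n)$ lies in $\mathfrak{sp}(2n)$ and $\mathfrak{sp}(2n)$ acts on the odd part by the defining representation, the right-hand side of \eqref{sec2_eq_relations-osp} is precisely the evaluation of $\{B_i^{\xi},B_j^{\eta\}}$ on $B_l^{\epsilon}$; this reduces the verification to a finite check over the indices $i,j,l$ and the signs $\xi,\eta,\epsilon$.

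Next I would argue that these $2n$ elements generate all of $\mathfrak{osp}(1|2n)$ as a Lie superalgebra. By construction the odd part is linearly spanned by $\{B_j^{\pm}\}$, so it suffices to see that the anticommutators $\{B_i^{\xi},B_j^{\eta}\}$ span $\mathfrak{sp}(2n)$. The symmetrized product map $S^{2}(\text{odd})\to \mathfrak{sp}(2n)$ given by anticommutation is $\mathfrak{sp}(2n)$-equivariant and between spaces of equal dimension $n(2n+1)$; a direct inspection of the explicit matrices $\{B_i^{\xi},B_j^{\eta}\}$ (equivalently, irreducibility of $\mathfrak{sp}(2n)$ as an adjoint module) shows the map is surjective, and hence a linear isomorphism.

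The main obstacle is the third step: establishing that \eqref{sec2_eq_relations-osp} are the \emph{defining} relations. Let $\mathfrak{L}$ be the quotient of the free Lie superalgebra on odd generators $B_j^{\pm}$ by the ideal generated by \eqref{sec2_eq_relations-osp}. The previous steps yield a surjection $\mathfrak{L} \twoheadrightarrow \mathfrak{osp}(1|2n)$, so it suffices to bound $\dim \mathfrak{L} \le 2n^{2}+3n$. The key observation is that \eqref{sec2_eq_relations-osp} rewrites any triple bracket $[\{B_i^{\xi},B_j^{\eta}\},B_l^{\epsilon}]$ as a linear combination of single generators; combined with the super-Jacobi identity this collapses every iterated bracket to a sum of $B_l^{\epsilon}$'s and of anticommutators $\{B_i^{\xi},B_j^{\eta}\}$. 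Taking into account the supersymmetry $\{B_i^{\xi},B_j^{\eta}\}=\{B_j^{\eta},B_i^{\xi}\}$, the number of independent generators of $\mathfrak{L}$ is at most $2n + \binom{2n+1}{2} = 2n + n(2n+1) = 2n^{2}+3n$, matching $\dim \mathfrak{osp}(1|2n)$ and forcing the surjection to be an isomorphism. The one technical point that requires care is ensuring that no hidden linear dependencies among the anticommutators arise solely from the super-Jacobi identity that would not already be accounted for inside $\mathfrak{osp}(1|2n)$; this is handled by pulling back the known linear independence of the matrices $\{B_i^{\xi},B_j^{\eta}\}$ in $\mathfrak{sp}(2n)$ through the surjection.
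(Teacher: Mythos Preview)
The paper does not prove this theorem: it is quoted from Ganchev--Palev \cite{Ganchev-Palev-1980} and simply stated as background, so there is no ``paper's own proof'' to compare against. Your proposal is therefore a reconstruction, and as such it is essentially correct and follows the natural line of argument (matrix realization $\Rightarrow$ surjection from the abstract presentation $\Rightarrow$ dimension bound via the observation that the relations collapse all iterated brackets to degree $\le 2$).

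One point of phrasing deserves tightening. In your final paragraph you worry about ``hidden linear dependencies among the anticommutators'' and say this is handled by ``pulling back the known linear independence \dots\ through the surjection.'' Linear independence does not pull back through surjections; what you actually want is the simpler observation that the surjection $\mathfrak{L}\twoheadrightarrow\mathfrak{osp}(1|2n)$ already forces $\dim\mathfrak{L}\ge 2n^{2}+3n$, while your spanning argument gives $\dim\mathfrak{L}\le 2n^{2}+3n$. The two bounds together give equality and hence the isomorphism; no separate argument about hidden dependencies is needed. You might also make explicit the step that brackets of two even elements $[\{B_i^{\xi},B_j^{\eta}\},\{B_k^{\alpha},B_l^{\beta}\}]$ reduce to anticommutators via the super-Jacobi identity, since your sketch only spells out the triple-bracket case.
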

	
There is a natural Hermitian anti-involution for $\mathfrak{osp}(1|2n)$ determined by $(B_i^\pm)^*=B_i^\mp$.
The Lie superalgebra $\mathfrak{osp}(1|2n)$ contains a subalgebra isomorphic to the Lie algebra $\mathfrak{gl}(n)$. 
\begin{prop}
The elements $E_{ij}:=\frac{1}{2}\{B_i^+,B_j^-\}$, for $i,j\in\{1,\dots,n\}$, form a basis for the subalgebra $\mathfrak{gl}(n)\subset \mathfrak{osp}(1|2n)$, satisfying the relations
\begin{equation}
\label{sec2_eq_relations-gl}
[E_{ij},E_{kl}]=  \delta_{jk}E_{il} - \delta_{il}E_{kj},\quad (i,j,k,l\in\{1,\dots,n\}),
\end{equation}
and conform to $E_{ij}^*=E_{ji}$.	
\end{prop}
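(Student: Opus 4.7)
The plan is to verify the three assertions embedded in the proposition: (i) the commutation relations \eqref{sec2_eq_relations-gl}, which simultaneously show that the span of the $E_{ij}$ is closed under bracket, forms a subalgebra, and carries the $\mathfrak{gl}(n)$ structure; (ii) linear independence of the $n^2$ elements $E_{ij}$; and (iii) the Hermitian conjugation identity $E_{ij}^{*}=E_{ji}$.

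For (i), I would first compute the two building-block commutators $[E_{ij},B_l^{\pm}]$. Setting $\xi=+$ and $\eta=-$ in the defining relation \eqref{sec2_eq_relations-osp}, specializing $\epsilon=+$ gives $[2E_{ij},B_l^{+}]=2\delta_{jl}B_i^{+}$, hence $[E_{ij},B_l^{+}]=\delta_{jl}B_i^{+}$; specializing $\epsilon=-$ gives $[2E_{ij},B_l^{-}]=-2\delta_{il}B_j^{-}$, hence $[E_{ij},B_l^{-}]=-\delta_{il}B_j^{-}$. Since $E_{ij}$ is an even element of the superalgebra, the super-Leibniz rule reduces to the ordinary derivation rule, and in particular
\[
[E_{ij},\{B_k^{+},B_l^{-}\}]=\{[E_{ij},B_k^{+}],B_l^{-}\}+\{B_k^{+},[E_{ij},B_l^{-}]\}.
\]
Substituting the two commutators above and pulling out the $\tfrac{1}{2}$ factor recombines the anticommutators into $E_{il}$ and $E_{kj}$, yielding $[E_{ij},E_{kl}]=\delta_{jk}E_{il}-\delta_{il}E_{kj}$. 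This is the only genuinely computational step and is the main (mild) obstacle: one needs to track the signs and the even/odd parities carefully, but no deeper input is required.

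For (ii), once the bracket closes as above, the span of the $E_{ij}$ is a homomorphic image of $\mathfrak{gl}(n)$. To upgrade this to an isomorphism, I would exhibit a representation of $\mathfrak{osp}(1|2n)$ on which the $E_{ij}$ act as linearly independent operators. The natural choice is the paraboson Fock representation $L(p)$ that the paper will focus on (or, more concretely, the ordinary boson Fock space corresponding to $p=1$, where $B_j^{\pm}$ act as the usual creation/annihilation operators and $E_{ij}$ becomes, up to a shift, the standard matrix unit $a_i^{\dagger}a_j$). Linear independence in such a module forces linear independence in $U(\mathfrak{osp}(1|2n))$, so the $n^2$ elements $E_{ij}$ form a basis of the subalgebra they generate.

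For (iii), one extends $(B_i^{\pm})^{*}=B_i^{\mp}$ as a graded anti-involution, so that $(XY)^{*}=Y^{*}X^{*}$ on homogeneous elements. Then
\[
E_{ij}^{*}=\tfrac{1}{2}(B_i^{+}B_j^{-}+B_j^{-}B_i^{+})^{*}=\tfrac{1}{2}(B_j^{+}B_i^{-}+B_i^{-}B_j^{+})=\tfrac{1}{2}\{B_j^{+},B_i^{-}\}=E_{ji},
\]
which is immediate once the anti-involution property on products is granted. The bulk of the work is therefore concentrated in step (i); steps (ii) and (iii) are essentially bookkeeping.
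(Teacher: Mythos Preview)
The paper states this proposition without proof; it is treated as a standard fact (implicitly deferred to Ganchev--Palev and the matrix realization of $\mathfrak{osp}(1|2n)$). Your argument is correct and supplies exactly the verification the paper omits: the computation of $[E_{ij},B_l^\pm]$ from \eqref{sec2_eq_relations-osp} followed by the derivation property is the natural route to \eqref{sec2_eq_relations-gl}, and the checks of linear independence and of $E_{ij}^*=E_{ji}$ are fine. One minor remark on step (ii): rather than invoking a Fock representation, it is slightly cleaner to appeal to the original matrix definition of $\mathfrak{osp}(1|2n)$ from Kac, where the $E_{ij}$ are literally the $\mathfrak{gl}(n)$ matrix units and linear independence is immediate; but your representation-theoretic argument is equally valid.
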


We denote the Cartan subalgebra of $\mathfrak{osp}(1|2n)$ by $\mathfrak{h}$. This is simultaneously the Cartan subalgebra of $\mathfrak{gl}(n)$. The elements $E_{ii}$, for $i\in\{1,\dots,n\}$, form a basis for $\mathfrak{h}$. We denote the corresponding dual basis for $\mathfrak{h}^*$ by $\epsilon_i$, for $i\in\{1,\dots,n\}$.
The notation $(\mu_1,\dots,\mu_n)$ will be used to represent any weight $\mu=\sum_{i=1}^n\mu_i\epsilon_i\in\mathfrak{h}^*$.

The Lie superalgebra $\mathfrak{osp}(1|2n)$ has root system 
\begin{equation}
\big\{ \epsilon_i- \epsilon_j,\pm\epsilon_k \pm\epsilon_l, \pm\epsilon_m : i,j,k,l,m\in\{1,\dots,n\}, i\neq j,k\leq l \big\},
	\end{equation}
for which we choose the simple root system 
\begin{equation}
\{\epsilon_1-\epsilon_2,\dots,\epsilon_{n-1}-\epsilon_{n}, \epsilon_n\}.
	\end{equation}
Here $E_{ij}$ is a root vector of the root $e_i-e_j$, $\{B_k^\pm,B_l^\pm\}$ is a root vector of the root $\pm e_k\pm e_l$ and $B_m^\pm$ is a root vector of the root $\pm\epsilon_m$.

The root system and simple root system of $\mathfrak{gl}(n)$ are subsets of those for $\mathfrak{osp}(1|2n)$, specifically $\mathfrak{gl}(n)$ has root system 
\begin{equation}
\big\{ \epsilon_i- \epsilon_j : i,j\in\{1,\dots,n\}, i\neq j \big\}
	\end{equation}
for which we choose the simple root system 
\begin{equation}
\{\epsilon_1-\epsilon_2,\dots,\epsilon_{n-1}-\epsilon_{n}\}.
	\end{equation}
We let $\mathfrak{t}^+$ and $\mathfrak{t}^-$ denote the subalgebras of $\mathfrak{gl}(n)$ consisting of positive and negative root vectors respectively, in particular
\begin{equation}
\label{sec2_eq_positive-root-algebra}
\mathfrak{t}^+=\Span_\C\{E_{ij} : i<j\}
	\end{equation}
and
\begin{equation}
\label{sec2_eq_negative-root-algebra}
\mathfrak{t}^-=\Span_\C\{E_{ij} : i>j\}.
	\end{equation}

\subsection{Simple modules of $\mathfrak{osp}(1|2n)$ and $\mathfrak{gl}(n)$}
\label{sec2.2}

We will use the following notation for the simple modules of $\mathfrak{osp}(1|2n)$ and $\mathfrak{gl}(n)$ that will be relevant in this paper.

\begin{defi}
For any positive integer $p\in\N$ we let $L(p)$ denote the simple lowest weight module of $\mathfrak{osp}(1|2n)$ with lowest weight vector $v_0$ of weight $(\frac{p}{2},\dots,\frac{p}{2})$.

For any integral dominant weight $\mu\in\mathfrak{h}^*$ with $\mu_i\in\N_0$ and $\mu_1\geq \cdots \geq \mu_n$, we let $V(\mu+\frac{p}{2})$ denote the simple finite-dimensional highest weight module of $\mathfrak{gl}(n)$ with highest weight vector $\xi_\mu$ of weight $\mu+\frac{p}{2}:=(\mu_1+\frac{p}{2},\dots,\mu_n+\frac{p}{2})\in\mathfrak{h}^*$.
	\end{defi}

In the paper \cite{Ganchev-Palev-1980} it was observed that $L(p)$ describes the Fock space of $n$ parabosonic particles of order $p$. In this context the generators $B_j^+$ and $B_j^-$ of $\mathfrak{osp}(1|2n)$ may be considered as parabosonic creation and annihilation operators and $v_0$ may be considered as the vacuum state $|0\rangle$. In the case $p=1$, parabosons become usual bosons and $L(1)$ becomes the usual boson Fock space.
The relations \eqref{sec2_eq_relations-osp}, satisfied by $B_j^+$ and $B_j^-$, originate from the initial papers on para-particles \cite{Green-1953, Greenberg-Messiah-1965}. 

The paper \cite{Bisbo-DeBie-VanderJeugt-2021} presents a realization of $L(p)$ as a space of Clifford algebra valued polynomials on which $B_j^+$ and $B_j^-$ act as $p$-dimensional vector variables and Dirac operators respectively. Combinatorial methods are used to construct a basis for this realization of $L(p)$ whose elements are expressed as polynomials in the $B_j^+$-operators acting on the lowest weight vector. Other works related to this realization of $L(p)$ include \cite{Lavicka-Soucek-2017, Lavicka-2019, Salom-2013}.

In \cite{Lievens-Stoilova-VanderJeugt-2008} the module $L(p)$ is realized as a quotient of an induced lowest weight module of $\mathfrak{osp}(1|2n)$. This realization is used to obtain a character formula and Gel'fand-Zetlin (GZ) basis for $L(p)$. In Section \ref{sec4.4}, we will express the elements of the GZ-basis as monomials of operators in $U(\mathfrak{osp}(1|2n))$ acting on the lowest weight vector. Such expressions are hitherto unknown in the literature.

Given a partition $\lambda$, that is, a non-increasing sequence $(\lambda_1,\lambda_2,\dots)$ of non-negative integers, we let $\ell(\lambda)$ denote the length of $\lambda$. The length is the number of non-zero entries in $\lambda$.
We let $\mathbb{P}$ denote the set of partitions of length at most $n$. For any partition $\lambda\in \mathbb{P}$ it is thus only necessary to specify the first $n$ entries. For that reason we write $\lambda= (\lambda_1,\dots,\lambda_n)$, when $\lambda\in \mathbb{P}$. We will denote the conjugate partition of $\lambda$ by $\lambda'$. 
Any partition can be uniquely described as a combinatorial object known as a Young diagram. This is a collection of empty boxes organized into rows and columns. The Young diagram of a partition $\lambda$ has $\lambda_i$ boxes in the $i$'th row (counted from top to bottom). As an example consider the partition $\lambda=(4,3,1)$. The corresponding Young diagrams for $\lambda$ and for its conjugate are then
\begin{equation}
\ytableausetup{centertableaux,smalltableaux}
\lambda=\ydiagram{4,3,1}
\quad \text{ and } \quad
\lambda'=\ydiagram{3,2,2,1}.
	\end{equation}
See \cite{Macdonald-2015} for more information on partitions and Young diagrams.

It is known that the $\mathfrak{osp}(1|2n)$-module $L(p)$ decomposes into a multiplicity free sum of simple $\mathfrak{gl}(n)$-modules. 
This follows directly from the character fomula of $L(p)$ obtained in \cite{Lievens-Stoilova-VanderJeugt-2008}.
\begin{theo}
\label{sec2_theo_decomposition-into-gl(n)-modules}
Considered as $\mathfrak{gl}(n)$-module we have the following isomorphism
\begin{equation}
L(p)\cong \bigoplus_{\lambda\in\mathbb{P},\ \ell(\lambda)\leq p} V\left(\lambda+\frac{p}{2}\right).
	\end{equation}
	\end{theo}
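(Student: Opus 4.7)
My plan is to deduce the decomposition from the character formula for $L(p)$ in [Lievens-Stoilova-VanderJeugt-2008], together with complete reducibility of the finite-dimensional representations of the reductive Lie algebra $\mathfrak{gl}(n)$.

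First I would verify that $L(p)$ decomposes as a direct sum of finite-dimensional simple $\mathfrak{gl}(n)$-modules. Since $L(p)$ is a lowest weight $\mathfrak{osp}(1|2n)$-module generated from $v_0$ by the action of the positive root vectors $B_i^+$ and $\{B_j^+,B_k^+\}$, and since every positive root has non-negative $\epsilon_i$-coordinates, every $\mathfrak{h}$-weight $\mu$ of $L(p)$ satisfies $\mu_i\ge p/2$ with $\mu_i-p/2\in\mathbb{Z}_{\ge 0}$. A PBW argument applied to $v_0$ shows that each weight space is finite-dimensional, so $L(p)$ is locally finite as a $\mathfrak{gl}(n)$-module and hence completely reducible. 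All $\mathfrak{gl}(n)$-highest weights that can occur must then be of the form $\lambda+\tfrac{p}{2}$ for some partition $\lambda\in\mathbb{P}$, yielding
$$L(p)\cong\bigoplus_{\lambda\in\mathbb{P}} m_\lambda\, V\!\left(\lambda+\tfrac{p}{2}\right),\qquad m_\lambda\in\mathbb{Z}_{\ge 0}.$$

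Next I would compare formal characters in $x_1,\dots,x_n$. Each summand $V(\lambda+\tfrac{p}{2})$ contributes $(x_1\cdots x_n)^{p/2}s_\lambda(x_1,\dots,x_n)$, where $s_\lambda$ is the Schur polynomial. The theorem is therefore equivalent to the identity
$$\mathrm{ch}\,L(p)\;=\;(x_1\cdots x_n)^{p/2}\sum_{\lambda\in\mathbb{P},\ \ell(\lambda)\le p} s_\lambda(x_1,\dots,x_n),$$
which is precisely (a rewriting of) the character formula of Lievens--Stoilova--Van der Jeugt. Linear independence of Schur polynomials then forces $m_\lambda=1$ whenever $\ell(\lambda)\le p$ and $m_\lambda=0$ otherwise, giving both the direct sum and its multiplicity-freeness in a single stroke.

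The main obstacle is the length restriction $\ell(\lambda)\le p$. For $p\ge n$ this condition is automatic on $\mathbb{P}$ and the character identity collapses to a classical Cauchy/Littlewood-type Schur function sum, making the decomposition essentially formal. For $p<n$ the truncation is genuinely non-trivial: it reflects the para-statistical structure of $L(p)$ (Green's ansatz) and is exactly what the character formula of [Lievens-Stoilova-VanderJeugt-2008] encodes, via an analysis of singular vectors in the induced module of which $L(p)$ is the simple quotient. Thus all the substantive work is packaged inside that cited character formula; granted it, the branching decomposition is obtained as above.
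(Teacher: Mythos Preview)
Your proposal is correct and follows essentially the same approach as the paper, which does not give a proof but simply remarks that the decomposition ``follows directly from the character formula of $L(p)$ obtained in \cite{Lievens-Stoilova-VanderJeugt-2008}.'' You supply more detail than the paper does; the only step that could be tightened is the passage from ``finite-dimensional weight spaces'' to ``locally finite,'' which is cleanest by observing that $\mathfrak{gl}(n)$ preserves the $\mathbb{Z}_{\ge 0}$-grading by total degree $\sum_i(\mu_i-\tfrac{p}{2})$, so each graded piece is already a finite-dimensional $\mathfrak{gl}(n)$-submodule.
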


\subsection{Semistandard Young tableaux and bases for $V\left(\lambda+\frac{p}{2}\right)$ and $L(p)$}
\label{sec2.3}

The simplest way to obtain a basis for $L(p)$ respecting the branching $\mathfrak{osp}(1|2n)\supset \mathfrak{gl}(n)$ is to choose a basis for each of the modules $V(\lambda+\frac{p}{2})$, for $\lambda\in\mathbb{P}$ with $\ell(\lambda)\leq p$. The union of these bases will then be a basis for $L(p)$. 
Doing this at each step of the branching $\mathfrak{osp}(1|2n)\supset \mathfrak{gl}(n)\supset \cdots \supset \mathfrak{gl}(1)$, is essentially how one obtains a GZ-basis for $L(p)$, see \cite{Lievens-Stoilova-VanderJeugt-2008}. In particular, a GZ-basis for $L(p)$ is a union of GZ-bases for the modules $V(\lambda+\frac{p}{2})$.

However, we will not at first be considering GZ-bases for the modules $V(\lambda+\frac{p}{2})$. 
In the first part of this paper we will instead be working with a simpler basis for $V(\lambda+\frac{p}{2})$, namely a Poincaré–Birkhoff–Witt (PBW) type basis, see Theorem \ref{sec2_theo_PBW-type-basis-u(n)}. 
Later we will discuss how the two bases for $L(p)$ obtained from the GZ- and PBW-type bases for $V(\lambda+\frac{p}{2})$ are related to each other, see Theorem \ref{sec4_theo_triangular-transition-matrix}.

To introduce the PBW-type basis for $V(\lambda+\frac{p}{2})$ we need the combinatorial objects known as semistandard (s.s.) Young tableaux. For a detailed reference see \cite{Macdonald-2015}.
Most of the notations and concepts that we shall use regarding s.s.\ Young tableaux appear there. The rest will be introduced when needed.
Let $\mathbb{Y}$ denote the set of s.s.\ Young tableaux with entries in $\{1,\dots,n\}$. 
Given $A\in \mathbb{Y}$, we let $\mu_A\in\N_0^n$ and $\lambda_A\in\mathbb{P}$ denote the weight and shape of $A$ respectively.
We furthermore let $\mathbb{Y}(\lambda)$ denote the set of s.s.\ Young tableaux in $\mathbb{Y}$ of shape $\lambda$. That is,
\begin{equation}
\mathbb{Y}(\lambda):=\{A\in\mathbb{Y}:\lambda_A=\lambda \}.
	\end{equation}
A s.s. Young tableau $A\in\mathbb{Y}$ can be regarded as a filling of the Young diagram $\lambda_A$ with numbers from $\{1,\dots,n\}$ such that the entries are non-decreasing from left to right along each row and strictly increasing from top to bottom along each column. The weight $\mu_A$ then determines the number of times each entry appears, $(\mu_A)_i$ being the number of $i$-entries. An example of a s.s. Young tableau of shape $\lambda_A=(4,3,1)$ and weight $\mu_A=(2,1,2,3)$ is
\begin{equation}
\ytableausetup{centertableaux,smalltableaux}
A=\ytableaushort{1134,244,3}.
	\end{equation}
We identify a partition $\lambda$ with the set of coordinates of the boxes in the corresponding Young diagram, letting the coordinate $(k,l)$ refer to the box in the $k$'th row (counted top to bottom) and in the $l$'th column (counted left to right). We then have
\begin{equation}
\lambda = \big\{ (k,l) : k\in\{1,\dots \ell(\lambda)\},\ l\in \{1,\dots, \lambda_k\} \big\}.
	\end{equation}
Given a s.s. Young tableau $A\in\mathbb{Y}(\lambda)$ we shall use the notation $A(k,l)$, for $(k,l)\in\lambda$, to refer to the entry of $A$ in the $k$'th row and $l$'th column.

The subset $\mathbb{Y}(\lambda)$ of $\mathbb{Y}$ indexes any basis of the module $V(\lambda+\frac{p}{2})$. This is because the character of $V(\lambda+\frac{p}{2})$ is a Schur function $s_\lambda(x_1,\dots,x_n)$, and $s_\lambda$ has an expression in terms of s.s. Young tableaux.
 In light of Theorem \ref{sec2_theo_decomposition-into-gl(n)-modules} this means that the set $\cup_{\lambda\in\mathbb{P},\ \ell(\lambda)\leq p}\mathbb{Y}(\lambda)$ indexes any basis of $L(p)$. 
The final piece of notation we shall need regarding s.s.\ Young tableaux is the exponent matrix. 
Given $A\in\mathbb{Y}$ the exponent matrix $\gamma_A\in M_{nn}(\N_0)$ of $A$ is the $n\times n$ lower triangular non-negative integer matrix with entries
\begin{equation}
\begin{split}
\label{sec2_eq_exponent-matrix}
(\gamma_A)_{ij}
	&:=\#\{  k\in\{1,\dots,\lambda_j\}: A(k,j)=i  \}\\
	&=\#\{ \text{ $i$'s in the $j$'th row of $A$ } \},
	\end{split}
	\end{equation}
for all $i,j\in \{1,\dots,n\}$.
Exponent matrices satisfy the following uniqueness property.
\begin{lemm}
\label{sec2_lemm_unique-tableau-exponent-matrix}
Let $A,B\in \mathbb{Y}$, then $\gamma_A=\gamma_B$ if and only if $A=B$.
	\end{lemm}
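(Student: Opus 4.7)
The forward implication is tautological, so the content is the converse. My plan is to exploit the fact that in a semistandard Young tableau the entries along each row are weakly increasing, which means a row is uniquely determined by the multiset of numbers appearing in it. The column $j$ of the exponent matrix $\gamma_A$ records exactly this multiset for row $j$: the integer $i$ appears $(\gamma_A)_{ij}$ times in row $j$. Hence once $\gamma_A$ is known, each row of $A$ can be reconstructed by listing, from left to right, the entry $j$ exactly $(\gamma_A)_{jj}$ times, then $j+1$ exactly $(\gamma_A)_{j+1,j}$ times, and so on (the lower triangular shape of $\gamma_A$, which follows from the fact that column entries of $A$ strictly increase and so row $j$ contains only values $\ge j$, guarantees this list covers all of row $j$).

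The one detail to verify before the reconstruction step is that the shape $\lambda_A$ itself is encoded in $\gamma_A$: summing the $j$-th column of $\gamma_A$ gives $\sum_{i=1}^n (\gamma_A)_{ij} = \lambda_j$, the length of the $j$-th row. So if $\gamma_A = \gamma_B$ then in particular $\lambda_A = \lambda_B$, so $A$ and $B$ have the same underlying Young diagram, and the row-by-row reconstruction above produces identical fillings for $A$ and $B$. Thus $A=B$.

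There is no real obstacle here; the lemma is almost a restatement of the definition once one uses that within a single row semistandardness forces the entries to be written in weakly increasing order. The only small point worth being explicit about is why the reconstruction stays within the semistandard regime (it does, because the columns will be strictly increasing automatically once the rows of $A$ and $B$ coincide and $A,B$ were assumed semistandard to begin with). I would keep the proof to two short paragraphs: one deriving the shape and the multiset-per-row from $\gamma_A$, and one observing that weak increase along rows turns the multiset into the unique row and hence pins down the tableau.
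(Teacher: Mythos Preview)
Your argument is correct. The paper states this lemma without proof, treating it as an elementary observation, so there is no original proof to compare against. Your reconstruction---recovering the shape from the column sums of $\gamma_A$ and then each row from its multiset of entries via the weakly-increasing condition---is exactly the natural way to see it, and would serve perfectly well as the omitted proof. The remark about lower triangularity is accurate but not strictly needed: even without it, knowing the full multiset $\{(\gamma_A)_{1j},\dots,(\gamma_A)_{nj}\}$ for row $j$ already determines that row once you sort the entries.
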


For any matrix $\gamma\in M_{nn}(\N_0)$ we consider the following element in the universal enveloping algebra $U(\mathfrak{t}^-)\subset U(\mathfrak{gl}(n))$:
\begin{equation}
\label{sec2_eq_E-gamma-matrix}
E^\gamma:=\prod_{k=2,\dots,n}^\rightarrow 
	(E_{k1}^{\gamma_{k1}}\cdots E_{k,k-1}^{\gamma_{k,k-1}}),
	\end{equation}
where the arrow refers to the ordering of the product ($k=2$ is leftmost and $k=n$ is rightmost).

With this notation in place we can now define the PBW-type bases for the modules $V(\lambda+\frac{p}{2})$. 
The following theorem comes from \cite{Molev-Yakimova-2018}.

\begin{theo}
\label{sec2_theo_PBW-type-basis-u(n)}
The simple unitary module $V(\lambda+\frac{p}{2})$ has basis
\begin{equation}
\big\{ E^{\gamma_A}\xi_\lambda : A\in \mathbb{Y}(\lambda) \big\},
	\end{equation}
where $\xi_\lambda$ is the highest weight vector of $V(\lambda+\frac{p}{2})$.
	\end{theo}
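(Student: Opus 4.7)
The plan is to combine three ingredients: a PBW spanning statement that reduces everything to monomials in $U(\mathfrak{t}^-)$ acting on $\xi_\lambda$, a Schur-function dimension count, and a combinatorial straightening that restricts the spanning set to monomials indexed by s.s.\ Young tableaux.

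First, since $\mathfrak{t}^+\xi_\lambda=0$ and $E_{ii}\xi_\lambda=(\lambda_i+\frac{p}{2})\xi_\lambda$, simplicity of $V(\lambda+\frac{p}{2})$ together with the triangular decomposition $U(\mathfrak{gl}(n))=U(\mathfrak{t}^-)U(\mathfrak{h})U(\mathfrak{t}^+)$ yields $V(\lambda+\frac{p}{2})=U(\mathfrak{t}^-)\xi_\lambda$. The PBW theorem applied to $\mathfrak{t}^-$ in the ordering used in \eqref{sec2_eq_E-gamma-matrix} makes the monomials $E^\gamma$, for $\gamma\in M_{nn}(\N_0)$ strictly lower triangular, a basis of $U(\mathfrak{t}^-)$, so the full family $\{E^\gamma\xi_\lambda\}$ already spans $V(\lambda+\frac{p}{2})$. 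Since the $\mathfrak{gl}(n)$-character of $V(\lambda+\frac{p}{2})$ equals $(x_1\cdots x_n)^{p/2}\,s_\lambda(x_1,\dots,x_n)$ with $x_i=e^{\epsilon_i}$, specialization at $x_i=1$ gives $\dim V(\lambda+\frac{p}{2})=|\mathbb{Y}(\lambda)|$. A direct computation shows that $E^{\gamma_A}\xi_\lambda$ is a weight vector of weight $\mu_A+\frac{p}{2}$, and the number of $A\in\mathbb{Y}(\lambda)$ with $\mu_A=\mu$ equals the Kostka number $K_{\lambda\mu}$, which is the dimension of the $(\mu+\frac{p}{2})$-weight space of $V(\lambda+\frac{p}{2})$. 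The proof can therefore be carried out one weight at a time, and it suffices to show that for each weight the vectors $E^{\gamma_A}\xi_\lambda$ with $A$ of the appropriate content span.

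The main obstacle is this last step, a straightening argument. Concretely, I would choose a monomial order on strictly lower-triangular exponent matrices refining the natural dominance on weights, and use the Chevalley--Serre-type relations hidden in \eqref{sec2_eq_relations-gl} together with $E_{ij}\xi_\lambda=0$ for $i<j$ to rewrite any non-SSYT monomial $E^\gamma\xi_\lambda$ modulo strictly smaller monomials, then argue that the rewriting terminates at monomials $E^{\gamma_B}\xi_\lambda$ indexed by genuine s.s.\ Young tableaux. Combined with Lemma~\ref{sec2_lemm_unique-tableau-exponent-matrix}, which guarantees that the map $A\mapsto\gamma_A$ is injective, this would complete the argument. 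The required straightening is exactly the PBW-type parametrization result of \cite{Molev-Yakimova-2018}, which I would invoke directly to finish the proof.
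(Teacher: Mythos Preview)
The paper does not prove this theorem itself; it states it as a known result imported from \cite{Molev-Yakimova-2018} (with additional references to \cite{Makhlin-2018,Reineke-1997}). Your proposal ends up in the same place: after setting up the PBW spanning argument and the dimension/weight-multiplicity count via Schur functions, you explicitly invoke the straightening result of \cite{Molev-Yakimova-2018} to finish. So the approaches agree, with your write-up supplying more of the surrounding scaffolding than the paper does.
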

In the literature this basis can be found in \cite{Makhlin-2018,Molev-Yakimova-2018, Reineke-1997}. In \cite{Molev-Yakimova-2018, Reineke-1997} it appears as PBW parametrization of the GZ-basis \cite{Gelfand-Zetlin-1950,Molev-2002} and of the canonical (crystal) basis \cite{Lusztig-1990-1,Lusztig-1990-2,Kashiwara-1990}.

Using Theorem \ref{sec2_theo_decomposition-into-gl(n)-modules} and Theorem \ref{sec2_theo_PBW-type-basis-u(n)} the following result gives a new basis for $L(p)$ provided one can characterize the $\mathfrak{gl}(n)$-highest weight vectors of $L(p)$.
\begin{coro}
\label{sec2_coro_PBW-type-basis-osp}
The simple unitary module $L(p)$ contains a unique (up to a scalar) $\mathfrak{gl}(n)$-highest weight vector $\xi_\lambda$ of weight $\lambda+\frac{p}{2}$, for each $\lambda\in \mathbb{P}$ with $\ell(\lambda)\leq p$. Consequently, $L(p)$ has a basis
\begin{equation}
\big\{ E^{\gamma_A}\xi_{\lambda_A} : A\in \mathbb{Y},\ \ell(\lambda_A)\leq p \big\}.
	\end{equation}
	\end{coro}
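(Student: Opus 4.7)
The corollary is essentially a direct consequence of the two results immediately preceding it, so my plan is to assemble it from Theorem \ref{sec2_theo_decomposition-into-gl(n)-modules} and Theorem \ref{sec2_theo_PBW-type-basis-u(n)} rather than prove anything substantively new.

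First I would argue uniqueness of $\xi_\lambda$. By Theorem \ref{sec2_theo_decomposition-into-gl(n)-modules}, as a $\mathfrak{gl}(n)$-module
\begin{equation*}
L(p)\cong \bigoplus_{\lambda\in\mathbb{P},\ \ell(\lambda)\leq p} V\Bigl(\lambda+\frac{p}{2}\Bigr),
\end{equation*}
and crucially this decomposition is multiplicity free. Each simple finite-dimensional $\mathfrak{gl}(n)$-module $V(\lambda+\frac{p}{2})$ contains, up to scalar, a unique highest weight vector (the one spanning its one-dimensional highest weight space). Since the summand $V(\lambda+\frac{p}{2})$ appears exactly once in the decomposition, the $(\lambda+\frac{p}{2})$-weight space of $L(p)$ that is annihilated by $\mathfrak{t}^+$ is one-dimensional. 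This yields a distinguished (up to scalar) $\mathfrak{gl}(n)$-highest weight vector $\xi_\lambda\in L(p)$ for every $\lambda\in\mathbb{P}$ with $\ell(\lambda)\leq p$, and no highest weight vectors of other weights occur.

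Next I would assemble the basis. For each $\lambda$ with $\ell(\lambda)\leq p$, the vector $\xi_\lambda$ obtained above is the highest weight vector of the summand $V(\lambda+\frac{p}{2})$, so by Theorem \ref{sec2_theo_PBW-type-basis-u(n)} the set $\{E^{\gamma_A}\xi_\lambda : A\in \mathbb{Y}(\lambda)\}$ is a basis for this summand. Taking the union over all admissible $\lambda$ and using the direct sum decomposition then gives a basis for $L(p)$. Rewriting the indexing set as
\begin{equation*}
\bigcup_{\lambda\in\mathbb{P},\ \ell(\lambda)\leq p}\mathbb{Y}(\lambda)=\{A\in\mathbb{Y}:\ell(\lambda_A)\leq p\},
\end{equation*}
and noting that $\xi_{\lambda_A}$ is well defined for such $A$, produces exactly the claimed basis $\{E^{\gamma_A}\xi_{\lambda_A}:A\in\mathbb{Y},\ \ell(\lambda_A)\leq p\}$.

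There is no real obstacle here; the only thing to verify with care is that different tableaux $A$ yield linearly independent basis elements across the whole of $L(p)$, but this is immediate from Lemma \ref{sec2_lemm_unique-tableau-exponent-matrix} together with the fact that vectors from different direct summands $V(\lambda+\frac{p}{2})$ cannot cancel. All the heavy lifting (the multiplicity-free decomposition and the PBW-type basis) has been done in the cited theorems, so the corollary is essentially a bookkeeping statement.
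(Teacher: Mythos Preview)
Your proposal is correct and matches the paper's own treatment: the paper presents the corollary as an immediate consequence of Theorem~\ref{sec2_theo_decomposition-into-gl(n)-modules} and Theorem~\ref{sec2_theo_PBW-type-basis-u(n)} without giving a separate proof, and your argument spells out exactly this reasoning.
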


One should note that in the notation of Corollary \ref{sec2_coro_PBW-type-basis-osp} the $\mathfrak{gl}(n)$-highest weight vector $\xi_0\in L(p)$, with the index $0$ referring to the empty s.s. Young tableau, is equal to the $\mathfrak{osp}(1|2n)$-lowest weight vector $v_0$ of $L(p)$ up to a non-zero scalar factor.

\section{Two explicit constructions of the new basis for $L(p)$}
\label{sec3}

In this section we consider the basis $\{E^{\gamma_A}\xi_{\lambda_A}\}$ for $L(p)$. We present two explicit constructions of the elements of this basis in terms of polynomials of operators from $U(\mathfrak{osp}(1|2n))$ acting on the lowest weight vector $v_0$ of $L(p)$.

We begin by constructing a set of vectors $\{\Omega_A\}$ in $L(p)$ that are defined as polynomials in the positive generators $B_j^+$ of $\mathfrak{osp}(1|2n)$ acting on $v_0$. These vectors are indexed by the set $\mathbb{E}$ of Young tableaux with entries in $\{1,\dots,n\}$. The set $\mathbb{E}$ of Young tableaux includes the set of $\mathbb{Y}$ of s.s. Young tableaux.
Among the vectors $\{\Omega_A\}$ we identify the $\mathfrak{gl}(n)$-highest weight vectors of $L(p)$, which we denote $\Omega_\lambda$, for $\lambda\in\mathbb{P}$ with $\ell(\lambda)\leq p$. We then use this information to prove the following identity which gives two explicit constructions of the elements of the basis $\{E^{\gamma_A}\xi_{\lambda_A}\}$ for $L(p)$ defined in Corollary \ref{sec2_coro_PBW-type-basis-osp}.
\begin{equation}
E^{\gamma_A}\Omega_{\lambda_A} 
	= \frac{(\lambda_A)_1!\dots(\lambda_A)_n!}{(\gamma_A)_{11}!\cdots (\gamma_A)_{nn}!}\Omega_A,
	\end{equation}
for $A\in\mathbb{Y}$ with $\ell(\lambda_A)\leq p$.

\subsection{The vectors $\Omega_A$}
\label{sec3.1}

Given $\lambda\in \mathbb{P}$ we let $\mathbb{E}$ denote the set of Young tableaux with entries in $\{1,\dots,n\}$ and let $\mathbb{E}(\lambda)$ denote the set of Young tableaux in $\mathbb{E}$ of shape $\lambda$.
Like the s.s.\ Young tableaux in $\mathbb{Y}(\lambda)$, a Young tableau $A\in \mathbb{E}(\lambda)$ can be defined as a filling of the Young diagram of shape $\lambda$ by the numbers in $\{1,\dots,n\}$, however contrary to the tableaux of $\mathbb{Y}(\lambda)$ we put no conditions on the placement of the entries of $A$.
The exponent matrix $\gamma_A$, shape $\lambda_A$ and weight $\mu_A$ of a Young tableau is defined in the same way as for a s.s. Young tableau. 
It should be noted that $\mathbb{Y}(\lambda)$ is a subset of $\mathbb{E}(\lambda)$.

To define the vectors $\Omega_A$, for $A\in \mathbb{E}(\lambda)$, we need a way to describe permutations of the rows and columns of a tableau in $\mathbb{E}(\lambda)$. 
To this end we consider the group of permutations
\begin{equation}
S_\lambda:= S_{\lambda_1}\times \dots \times S_{\lambda_{\ell(\lambda)}}
	\end{equation}
and let $\sgn(\sigma):=\sgn(\sigma_1)\cdots\sgn(\sigma_{\ell(\lambda)})$ denote the sign of $\sigma\in S_\lambda$. The group $S_\lambda$ is commonly known as the Young subgroup corresponding to the partition $\lambda$. Young subgroups are widely used in the classification of simple modules of the symmetric group, see \cite{James-1978}.

The group $S_\lambda$ acts on the set $\mathbb{E}(\lambda)$ by permuting the rows of Young tableaux. Given $A\in\mathbb{E}(\lambda)$ and $\tau\in S_\lambda$ we define the row permuted Young tableau $A^\tau$ to be the Young tableau with entries 
\begin{equation}
\label{sec3_eq_row-permutation}
A^\tau(k,l):=A(k,\tau_k(l)), \quad \big( (k,l)\in\lambda)\big).
	\end{equation}
Similarly the group $S_{\lambda'}$ acts on $\mathbb{E}(\lambda)$ by permuting the columns of Young tableaux. Given $A\in\mathbb{E}(\lambda)$ and $\sigma\in S_{\lambda'}$ we define the column permuted Young tableau $A_\sigma$ to be the Young tableau with entries 
\begin{equation}
A_\sigma(k,l):=A(\sigma_l(k),l), \quad \big( (k,l)\in\lambda)\big).
	\end{equation}
	
We can now define the vectors $\Omega_A$, for $A\in \mathbb{E}(\lambda)$. Let $A\in\mathbb{E}(\lambda)$ and consider the following element in $U(\mathfrak{osp}(1|2n))$
\begin{equation}
\label{sec3_eq_def-X_A}
B_A^+:= \prod_{l=1,\dots,\ell(\lambda')}^\rightarrow B_{A(1,l)}^+\cdots B_{A(\lambda_l',l)}^+
	\end{equation}
together with the vectors
\begin{equation}
\label{sec3_eq_def-omega_A}
\omega_A:=\sum_{\sigma\in S_{\lambda'}} \sgn(\sigma)B_{A_\sigma}^+v_0
	\end{equation}
and
\begin{equation}
\label{sec3_eq_def-Omega_A}
\Omega_A:=\sum_{\tau\in S_{\lambda}} \omega_{A^\tau}
	=\sum_{\tau\in S_{\lambda}}\sum_{\sigma\in S_{\lambda'}} 			\sgn(\sigma)B_{(A^\tau)_\sigma}^+v_0.
	\end{equation}
The arrow in \eqref{sec3_eq_def-X_A} refers to the ordering of the product ($l=1$ is leftmost and $l=\ell(\lambda')$ is rightmost).

The vectors $\omega_A$, for $A\in\mathbb{Y}$ with $\ell(\lambda_A)\leq p$, would form the basis for $L(p)$ considered in \cite{Bisbo-DeBie-VanderJeugt-2021} if we had defined $B_A^+$ with the opposite multiplication order $\prod_{l=1,\dots,\ell(\lambda')}^\leftarrow B_{A(1,l)}^+\cdots B_{A(\lambda_l',l)}^+$. Repeating the line of reasoning from that paper, it can be proven that the $\omega_A$'s defined here also provide a basis for $L(p)$.
The basis $\{\omega_A: A\in\mathbb{Y}, \ell(\lambda_A)\leq p\}$ for $L(p)$ does however not respect the branching $\mathfrak{osp}(1|2n)\supset \mathfrak{gl}(n)$. 
That is, an element of this basis is not in general an element of one of the simple $\mathfrak{gl}(n)$-module components $V(\lambda+\frac{p}{2})$ of $L(p)$, but is instead a linear combination of vectors from multiple of these components.

On the other hand, the vectors $\Omega_A$, for $A\in\mathbb{Y}$ with $\ell(\lambda_A)\leq p$, turn out to be scalar multiples of the vectors of the basis defined in Corollary \ref{sec2_coro_PBW-type-basis-osp}.

It should meanwhile be noted that the vectors $B_A^+v_0$, for $A\in\mathbb{Y}$ with $\ell(\lambda_A)\leq p$, can be proven not to form a basis for $L(p)$ in general.

\subsection{Correspondence between the vectors $\Omega_A$ and $E^{\gamma_A}\xi_{\lambda_A}$}
\label{sec3.2}

Looking back at the definition of the elements of the basis $\{E^{\gamma_A}\xi_{\lambda_A}\}$ from Corollary \ref{sec2_coro_PBW-type-basis-osp} we see that the exponent matrices $\gamma_A$ play an essential role in their definition.
To relate $\Omega_A$ with $E^{\gamma_A}\xi_{\lambda_A}$, we are thus compelled to take  a closer look at the dependence of $\Omega_A$ on the exponent matrix $\gamma_A$.

We begin by noting that any two tableaux $A,B\in\mathbb{E}(\lambda)$ have the same exponent matrix, that is, $\gamma_A=\gamma_B$, if and only if $A=B^\tau$ for some permutation $\tau\in S_\lambda$. 
We write $A\sim B$ if this is the case. This defines an equivalence relation on $\mathbb{E}(\lambda)$. We let $[A]$ denote the equivalence class of $A$.

Consider a partition $\lambda\in\mathbb{P}$ and a matrix $\gamma\in M_{nn}(\N_0)$ with column sum $\lambda$, that is, $\lambda_j=\sum_{i=1}^n \gamma_{ij}$, for all $j\in\{1,\dots,n\}$. We then let $D(\gamma)$ denote the unique tableau in $\mathbb{E}(\lambda)$ which has exactly $\gamma_{ij}$ $i$-valued entries in its $j$'th row, for any $i,j\in \{1,\dots,n\}$, and whose entries are non-decreasing from left to right along each row. For example, if
\begin{equation}
\gamma=\left(
\begin{matrix}
1&0&1\\
1&1&1\\
2&2&0
\end{matrix}
\right),
\quad \text{ then } \quad 
D(\gamma)=
\ytableausetup{smalltableaux,centertableaux}\ytableaushort{1233,233,12}.
	\end{equation}
The tableaux $D(\gamma)$ give a complete classification of the equivalence classes in $\mathbb{E}(\lambda)$.

\begin{lemm}
\label{sec3_lemm_exponent-matrix-tableau}
Let $\lambda\in\mathbb{P}$, then the map $\gamma\mapsto D(\gamma)$ defines a bijection from the matrices in $M_{nn}(\N_0)$ with column sum $\lambda$ to the equivalence classes of\ $\mathbb{E}(\lambda)$.
Furthermore, $\Omega_{A}=\Omega_{D(\gamma)}$, for all $A\in [D(\gamma)]$.
	\end{lemm}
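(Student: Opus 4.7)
The plan is to handle the two assertions separately. Both are essentially combinatorial and follow by unpacking the definitions, once one chooses a canonical representative for each equivalence class and exploits the $S_\lambda$-symmetry built into $\Omega_A$.

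For the bijection, I would first reformulate the equivalence relation $\sim$ as already anticipated in the paragraph preceding the lemma: two tableaux $A,B\in\mathbb{E}(\lambda)$ satisfy $\gamma_A=\gamma_B$ precisely when, for each $j\in\{1,\dots,\ell(\lambda)\}$, rows $j$ of $A$ and of $B$ carry the same multiset of entries of $\{1,\dots,n\}$, which (because $S_\lambda$ acts by permuting entries within each row) is the same as saying that $A=B^\tau$ for some $\tau\in S_\lambda$. Hence equivalence classes in $\mathbb{E}(\lambda)/\!\sim$ are faithfully labelled by the exponent matrices that actually arise. Since the $j$th row of any $A\in\mathbb{E}(\lambda)$ has exactly $\lambda_j$ boxes, one has $\sum_{i=1}^{n}(\gamma_A)_{ij}=\lambda_j$, so the relevant matrices are precisely those in $M_{nn}(\mathbb{N}_0)$ whose $j$th column sums to $\lambda_j$. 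Conversely, every such $\gamma$ is realized by $D(\gamma)$, and by construction $\gamma_{D(\gamma)}=\gamma$. This gives injectivity of $\gamma\mapsto[D(\gamma)]$, and for any $A$ we have $D(\gamma_A)\sim A$ since $\gamma_{D(\gamma_A)}=\gamma_A$, giving surjectivity.

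For the second assertion it suffices to show $\Omega_A=\Omega_{A^\rho}$ for every $\rho\in S_\lambda$, after which the statement follows by taking $\rho$ so that $A^\rho = D(\gamma_A)$. Directly from \eqref{sec3_eq_row-permutation} one checks that $(A^\rho)^\tau=A^{\rho\tau}$, where $\rho\tau=(\rho_1\tau_1,\dots,\rho_{\ell(\lambda)}\tau_{\ell(\lambda)})$ is the componentwise product in $S_\lambda=S_{\lambda_1}\times\dots\times S_{\lambda_{\ell(\lambda)}}$. Reindexing the sum \eqref{sec3_eq_def-Omega_A} by the bijection $\tau\mapsto\rho\tau$ of $S_\lambda$ then yields
\begin{equation*}
\Omega_{A^\rho}=\sum_{\tau\in S_\lambda}\omega_{(A^\rho)^\tau}=\sum_{\tau\in S_\lambda}\omega_{A^{\rho\tau}}=\sum_{\tau'\in S_\lambda}\omega_{A^{\tau'}}=\Omega_A,
\end{equation*}
as required.

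None of the steps is a genuine obstacle; the only point demanding mild care is keeping the conventions straight, namely that $(\gamma_A)_{ij}$ counts entries in a \emph{row} (so that $\lambda_j$ appears as a \emph{column} sum of $\gamma$), and that composition in the Young subgroup $S_\lambda$ is componentwise, which is exactly what makes the identity $(A^\rho)^\tau=A^{\rho\tau}$ hold and allows the reindexing in the last display.
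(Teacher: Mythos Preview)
Your proof is correct. The paper does not actually give a proof of this lemma; it is stated without proof, with the key observation (that $\gamma_A=\gamma_B$ iff $A=B^\tau$ for some $\tau\in S_\lambda$) recorded in the paragraph immediately preceding the lemma and the edge cases $\gamma_{D(\gamma)}=\gamma$ and $D(\gamma_B)\in[B]$ noted in the paragraph immediately following it. Your argument is precisely the natural expansion of these remarks into a full proof: you use the first observation to identify equivalence classes with exponent matrices, the column-sum condition to pin down which matrices occur, and the construction of $D(\gamma)$ to exhibit a canonical representative; for the second assertion your reindexing via $(A^\rho)^\tau=A^{\rho\tau}$ is exactly the $S_\lambda$-invariance built into the definition \eqref{sec3_eq_def-Omega_A}. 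There is nothing to compare against beyond noting that the paper treats the result as routine, and your write-up confirms that it is.
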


At this point it is beneficial to remark some notational edge cases. Namely $\gamma_{D(\gamma)}=\gamma$ for all $\gamma\in M_{nn}(\N_0)$ with column sum $\lambda\in \mathbb{P}$, and $D(\gamma_B)\in [B]$, for all $B\in\mathbb{E}(\lambda)$.

The point of Lemma \ref{sec3_lemm_exponent-matrix-tableau} is that it lets us study $\Omega_{D(\gamma_A)}$ in place of $\Omega_A$, since $\Omega_{D(\gamma_A)}=\Omega_A$. The main benefit is the more explicit dependence on the exponent matrix $\gamma_A$.

As the following lemma illustrates, we can give a simple expression for the action of $E_{ij}$ on $\Omega_{D(\gamma)}$.
Shortly explained the lemma tells us that $E_{ii}$ acts on $\Omega_A$ with the scalar $(\frac{p}{2}+(\mu_A)_i)$ and that the action of $E_{ij}$, for $i\neq j$, on $\Omega_A$ is a sum over the $\Omega_B$'s for which $B$ can be obtained from $A$ by replacing a $j$-valued entry with an $i$-valued entry. For example,

\begin{equation}
\ytableausetup{smalltableaux,centertableaux}
E_{33}\Omega_{\ytableaushort{223,33}}
	= (\frac{p}{2}+3)\Omega_{\ytableaushort{223,33}}
\end{equation}
and
\begin{equation}
\ytableausetup{smalltableaux,centertableaux}
E_{43}\Omega_{\ytableaushort{223,33}}
	= \Omega_{\ytableaushort{223,43}}
	+ \Omega_{\ytableaushort{223,34}}
	+ \Omega_{\ytableaushort{224,33}}
	= 2\Omega_{\ytableaushort{223,34}}
	+ \Omega_{\ytableaushort{224,33}}
	\end{equation}

To state the lemma, we need to introduce notation for unit matrices.
For any $i,j\in \{1,\dots,n\}$ we let $e_{ij}\in M_{nn}(\N_0)$ denote the unit matrix with entries $(e_{ij})_{i'j'}:=\delta_{ii'}\delta_{jj'}$, for all $i',j'\in\{1,\dots,n\}$.

\begin{lemm}
\label{sec3_lemm_u(n)-action-on-Omega}
Let $\lambda\in \mathbb{P}$ and $\gamma\in M_{nn}(\N_0)$ with column sum $\lambda$. Then
\begin{equation}
E_{ij}\Omega_{D(\gamma)}= \delta_{ij}\frac{p}{2}\Omega_{D(\gamma)}+ \sum_{k=1}^n \gamma_{jk}\Omega_{D(\gamma+e_{ik}-e_{jk})},
	\end{equation}
for all $i,j\in\{1,\dots,n\}$.
	\end{lemm}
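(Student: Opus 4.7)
The plan is to reduce everything to one clean commutation formula and then carefully track how the replacement of a single entry interacts with the row and column symmetrizations.

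The starting point is the identity $[E_{ij}, B_l^+] = \delta_{jl}\, B_i^+$, obtained by specializing the defining relation \eqref{sec2_eq_relations-osp} with $\xi=+$, $\eta=-$, $\epsilon=+$ and using $E_{ij}=\tfrac12\{B_i^+,B_j^-\}$. Combined with the observation that $v_0$ is a weight vector of weight $(\tfrac{p}{2},\dots,\tfrac{p}{2})$, so that $E_{ij}v_0=\delta_{ij}\tfrac{p}{2}v_0$, a straightforward induction on the length of the monomial $B_A^+$ yields the basic replacement formula
\begin{equation}
E_{ij}\, B_A^+ v_0 = \delta_{ij}\frac{p}{2}\, B_A^+ v_0 \;+\; \sum_{(k,l)\in\lambda:\, A(k,l)=j} B_{A[(k,l)\to i]}^+ v_0,
\end{equation}
where $A[(k,l)\to i]$ denotes the Young tableau obtained from $A$ by replacing the entry in box $(k,l)$ by $i$.

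Next, I would lift this to $\omega_A$ and then to $\Omega_A$. The key compatibility is that changing an entry commutes with both column and row permutations in the obvious sense: $(A_\sigma)[(k,l)\to i] = \bigl(A[(\sigma_l(k),l)\to i]\bigr)_\sigma$ and $(A^\tau)[(k,l)\to i] = \bigl(A[(k,\tau_k(l))\to i]\bigr)^\tau$. Applying the replacement formula inside each summand of $\omega_A=\sum_\sigma \sgn(\sigma)B_{A_\sigma}^+v_0$ and reindexing the position $(k,l)$ of the change by the appropriate column permutation, one obtains $E_{ij}\omega_A = \delta_{ij}\tfrac{p}{2}\omega_A + \sum_{(k,l):\,A(k,l)=j}\omega_{A[(k,l)\to i]}$. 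Summing over the row permutations $\tau\in S_\lambda$ and using the analogous compatibility yields
\begin{equation}
E_{ij}\,\Omega_A = \delta_{ij}\frac{p}{2}\,\Omega_A \;+\; \sum_{(k,l)\in\lambda:\, A(k,l)=j}\Omega_{A[(k,l)\to i]}.
\end{equation}

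Finally, I would specialize $A=D(\gamma)$. By construction, row $k$ of $D(\gamma)$ contains exactly $\gamma_{jk}$ entries equal to $j$, and replacing any one of them by $i$ produces a tableau whose exponent matrix is $\gamma+e_{ik}-e_{jk}$; by Lemma \ref{sec3_lemm_exponent-matrix-tableau} this tableau lies in the equivalence class of $D(\gamma+e_{ik}-e_{jk})$, so all $\gamma_{jk}$ such positions contribute the same vector $\Omega_{D(\gamma+e_{ik}-e_{jk})}$. Grouping the sum by rows gives the desired identity. The only place that requires real care is the bookkeeping step that passes the entry-replacement through the row and column symmetrizations; once the two change-of-variable formulas above are verified, the rest is mechanical counting.
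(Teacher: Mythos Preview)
Your proposal is correct and follows essentially the same route as the paper: start from $[E_{ij},B_l^+]=\delta_{jl}B_i^+$ and $E_{ij}v_0=\delta_{ij}\tfrac{p}{2}v_0$ to get an entry-replacement formula, verify that this replacement commutes with the column and row symmetrizations so that the formula lifts from $B_A^+v_0$ to $\omega_A$ and then to $\Omega_A$, and finally specialize to $A=D(\gamma)$ and group the $j$-entries row by row. The only cosmetic difference is that the paper packages the column symmetrization via the multi-bracket expression for $\omega_A$, whereas you pass through $B_A^+v_0$ first; the bookkeeping and the change-of-variable identities are the same.
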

\begin{proof}
Let $A$ be a tableau in the equivalence class $[D(\gamma)]$. Then $A$ has shape $\lambda$ and \eqref{sec3_eq_def-omega_A} tells us that 
\begin{equation}
\label{sec3_eq_multi-bracket-description}
\omega_A = 
[B_{A(1,1)}^+,\dots,B_{A(\lambda_1',1)}^+]\cdots [B_{A(1,\ell(\lambda'))}^+,\dots,B_{A(\lambda_{\ell(\lambda')}',\ell(\lambda'))}^+]v_0,
	\end{equation}
where we use the following notation:
\begin{equation}
\label{sec3_eq_def-multi-bracket}
[B_{i_1}^+,\dots,B_{i_k}^+]
	:=
	\sum_{\sigma\in S_k} \sgn(\sigma) B_{i_{\sigma(1)}}^+\dots B_{i_{\sigma(k)}}^+.
	\end{equation}

Using the relation $[E_{ij},B_k^+]=\delta_{jk}B_i^+$ repeatedly we get
\begin{equation}
\label{sec3_eq_action-on-multi-bracket}
\begin{split}
\left[ E_{ij},[B_{A(1,l)}^+,\dots,B_{A(\lambda_l',l)}^+]\right]
	=\sum_{k=1}^{\lambda_l'} \delta_{j,A(k,l)}
	[B_{A(1,l)}^+,\dots,B_{A(k-1,l)}^+,B_i^+,B_{A(k+1,l)}^+,\dots,B_{A(\lambda_l',l)}^+]
	\end{split},
	\end{equation}
for any $l\in \{1,\dots,\ell(\lambda')\}$.
Keeping in mind that $E_{ij}v_0=\delta_{ij}\frac{p}{2}v_0$ we get the following identity through repeated use of \eqref{sec3_eq_action-on-multi-bracket}
\begin{equation}
E_{ij}\omega_A	=
	\delta_{ij}\frac{p}{2}\omega_A
	+\sum_{\substack{(k,l)\in\lambda,\\ A(k,l)=j}} 
	\omega_{A_{(k,l)\to i}},
	\end{equation}
where $A_{(k,l)\to i}$ is the tableau obtained by replacing the entry of $A$ at coordinate $(k,l)$ with an $i$. In particular, for any $(s,t)\in\lambda$,
\begin{equation}
A_{(k,l)\to i}(s,t)=
\begin{cases}
A(s,t), & \text{ if } (s,t)\neq (k,l),\\
i, & \text{ if } (s,t)=(k,l).
	\end{cases}
	\end{equation}
Recalling the action of $S_\lambda$ on $\mathbb{E}(\lambda)$ defined in \eqref{sec3_eq_row-permutation} we note that 
\begin{equation}
(A_{(k,l)\to i})^\tau = (A^\tau)_{(k,\tau_k^{-1}(l))\to i}.
	\end{equation}
A short calculation then implies
\begin{equation}
E_{ij}\omega_{A^\tau}=
	\delta_{ij}\frac{p}{2}\omega_A
	+\sum_{\substack{(k,l)\in\lambda,\\ A(k,l)=j}} 
	\omega_{(A_{(k,l)\to i})^\tau}.
	\end{equation}
Using \eqref{sec3_eq_def-Omega_A} we then get
\begin{equation}
E_{ij}\Omega_{A}=
	\delta_{ij}\frac{p}{2}\Omega_A
	+\sum_{\substack{(k,l)\in\lambda,\\ A(k,l)=j}} 
	\Omega_{A_{(k,l)\to i}}.
	\end{equation}
To get the lemma from this it is enough to recall that $\Omega_A=\Omega_{D(\gamma)}$ by Lemma \ref{sec3_lemm_exponent-matrix-tableau} and to realize that if $A(k,l)=j$, then $\Omega_{A_{(k,l)\to i}}=\Omega_{D(\gamma+e_{ik}-e_{jk})}$. 
	\end{proof}

An exciting consequence of Lemma \ref{sec3_lemm_u(n)-action-on-Omega} is that we can now identify the $\mathfrak{gl}(n)$-highest weight vectors in $L(p)$.

Given a partition $\lambda\in \mathbb{P}$, we consider the diagonal matrix $\gamma_\lambda\in M_{nn}(\N_0)$ whose entries are $(\gamma_\lambda)_{ij}:=\lambda_i\delta_{ij}$, for all $i,j\in\{1,\dots,n\}$. Alternatively we can write $\gamma_\lambda=\sum_{i=1}^n\lambda_i e_{ii}$.
The tableau $D(\gamma_\lambda)$ is then the s.s.\ Young tableau in $\mathbb{Y}(\lambda)$ with all $1$'s in the first row, all $2$'s in the second row and all $k$'s in the $k$'th row.
We shall make use of the shorthand notation $\Omega_{\lambda}:= \Omega_{D(\gamma_\lambda)}$ and similarly $\omega_{\lambda}:= \omega_{D(\gamma_\lambda)}$, noting here that $\Omega_\lambda=(\lambda_1!\cdots\lambda_n!)\omega_\lambda$.

\begin{prop}
\label{sec3_prop_highest-weight-vector}
Let $\lambda\in \mathbb{P}$. If $\ell(\lambda)\leq p$, then the vector $\Omega_{\lambda}$ is non-zero and a $\mathfrak{gl}(n)$-highest weight vector in $L(p)$ of weight $\frac{p}{2}+\lambda$. If $\ell(\lambda)>p$, then $\Omega_\lambda=0$.
This classifies all the $\mathfrak{gl}(n)$-highest weight vectors in $L(p)$, up to multiples by scalar factors.
	\end{prop}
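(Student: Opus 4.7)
The plan is to apply Lemma~\ref{sec3_lemm_u(n)-action-on-Omega} directly to $\Omega_\lambda=\Omega_{D(\gamma_\lambda)}$ to verify the $\mathfrak{gl}(n)$-highest weight conditions, and then use Theorem~\ref{sec2_theo_decomposition-into-gl(n)-modules} together with the basis property of $\{\omega_A\}$ recalled just before the proposition to settle non-vanishing and vanishing. Substituting $(\gamma_\lambda)_{jk}=\lambda_j\delta_{jk}$ into Lemma~\ref{sec3_lemm_u(n)-action-on-Omega} collapses the sum to a single term and gives
\[
E_{ij}\Omega_\lambda \;=\; \delta_{ij}\tfrac{p}{2}\,\Omega_\lambda \,+\, \lambda_j\,\Omega_{D(\gamma_\lambda+e_{ij}-e_{jj})}.
\]
Setting $i=j$ yields $E_{ii}\Omega_\lambda=(\tfrac{p}{2}+\lambda_i)\Omega_\lambda$, so $\Omega_\lambda$ carries weight $\tfrac{p}{2}+\lambda$, as desired.

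For $i<j$ I would analyse the tableau $A:=D(\gamma_\lambda+e_{ij}-e_{jj})$. It has shape $\lambda$, row $i$ consisting entirely of $i$'s, and row $j$ equal to $(i,j,\ldots,j)$. Thanks to the partition inequality $\lambda_i\geq\lambda_j$, for every $\tau\in S_\lambda$ the row-permuted tableau $A^\tau$ has an $i$ at row $i$ and an $i$ at row $j$ in the same column $l^*\in\{1,\ldots,\lambda_j\}$, namely the column to which $\tau_j$ sends the unique $i$ of row $j$ (column $l^*$ is present in row $i$ because $l^*\leq\lambda_j\leq\lambda_i$). Writing $\omega_{A^\tau}$ as the product of column multi-brackets as in \eqref{sec3_eq_multi-bracket-description}, the $l^*$-th multi-bracket has two equal arguments and therefore vanishes by the standard antisymmetrization identity for \eqref{sec3_eq_def-multi-bracket} (pair $\rho\leftrightarrow (i,j)\cdot\rho$ in $S_{\lambda'_{l^*}}$ to obtain a sign-reversing bijection that fixes the summand). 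Hence $\omega_{A^\tau}=0$ for every $\tau$, so $\Omega_A=0$ and $E_{ij}\Omega_\lambda=0$ for all $i<j$, meaning $\Omega_\lambda$ is annihilated by $\mathfrak{t}^+$.

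It remains to handle non-vanishing when $\ell(\lambda)\leq p$, vanishing when $\ell(\lambda)>p$, and the classification statement. Since every row of $D(\gamma_\lambda)$ is constant, $D(\gamma_\lambda)^\tau=D(\gamma_\lambda)$ for all $\tau\in S_\lambda$, so $\Omega_\lambda=(\lambda_1!\cdots\lambda_n!)\,\omega_\lambda$. The paper has recalled that $\{\omega_A : A\in\mathbb{Y},\ \ell(\lambda_A)\leq p\}$ is a basis of $L(p)$, and since $D(\gamma_\lambda)\in\mathbb{Y}(\lambda)$ whenever $\ell(\lambda)\leq p$, the vector $\omega_\lambda$ is then a basis element and hence non-zero; consequently $\Omega_\lambda\neq 0$. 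Conversely, if $\ell(\lambda)>p$, the computation above shows $\Omega_\lambda$ is either zero or a $\mathfrak{gl}(n)$-highest weight vector of weight $\tfrac{p}{2}+\lambda$; but by Theorem~\ref{sec2_theo_decomposition-into-gl(n)-modules} the multiplicity-free decomposition of $L(p)$ contains $\mathfrak{gl}(n)$-highest weights only of the form $\tfrac{p}{2}+\mu$ with $\ell(\mu)\leq p$, so no such vector exists and $\Omega_\lambda=0$. Finally, that same multiplicity-free decomposition shows that every $\mathfrak{gl}(n)$-highest weight vector in $L(p)$ spans the one-dimensional top weight space of some summand $V(\mu+\tfrac{p}{2})$ with $\ell(\mu)\leq p$, which we have just identified with $\C\,\Omega_\mu$.

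The main technical obstacle I expect is the antisymmetry step in the second paragraph: one must verify both that the column-collision at rows $i,j$ persists for every row permutation $\tau$ (which is where $\lambda_i\geq\lambda_j$ is used decisively), and that \eqref{sec3_eq_multi-bracket-description} reduces the vanishing of $\omega_{A^\tau}$ to the vanishing of a single multi-bracket with a repeated argument.
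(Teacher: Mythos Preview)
Your proof is correct and follows essentially the same approach as the paper's. The paper invokes Lemma~\ref{sec3_lemm_u(n)-action-on-Omega} and simply asserts that both the weight and the annihilation by $\mathfrak{t}^+$ ``follow directly'' from it; you have correctly filled in the one step the paper leaves implicit, namely that $\Omega_{D(\gamma_\lambda+e_{ij}-e_{jj})}=0$ for $i<j$ via the column-collision/antisymmetry argument for the multi-bracket, and your non-vanishing, vanishing, and classification arguments coincide with the paper's.
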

\begin{proof}
Consider first the case $\ell(\lambda)\leq p$.
As remarked following \eqref{sec3_eq_def-Omega_A} the set $\{\omega_A:A\in\mathbb{Y}, \ell(\lambda_A)\}$ is a basis for $L(p)$. Since $\omega_\lambda$ is an element of this basis it follows that $\omega_\lambda\neq 0$. This in turn implies that $\Omega_\lambda\neq 0$.
We need to check two things, namely that $\Omega_{\lambda}$ has the right weight and that it is annihilated by $\mathfrak{t}^+$.
Both follow directly from Lemma \ref{sec3_lemm_u(n)-action-on-Omega} when we recall that the positive root vectors of $\mathfrak{gl}(n)$ are the $E_{ij}$'s with $i<j$. So the vectors $\Omega_{\lambda}$ with $\ell(\lambda)\leq p$ are $\mathfrak{gl}(n)$-highest weight vectors in $L(p)$. By Theorem \ref{sec2_theo_decomposition-into-gl(n)-modules}, any other $\mathfrak{gl}(n)$-highest weight vector in $L(p)$ must be a scalar multiple of one of these.

If $\ell(\lambda)>p$, then $\Omega_\lambda$ is annihilated by $\mathfrak{t}^+$. By Theorem \ref{sec2_theo_decomposition-into-gl(n)-modules} no $\mathfrak{gl}(n)$-highest weight vector of this weight exists in $L(p)$. We conclude that $\Omega_\lambda=0$.
	\end{proof}

Since we now have explicit constructions of the $\mathfrak{gl}(n)$-highest weight vectors in $L(p)$ we can give the first expression of the elements of the basis from Corollary \ref{sec2_coro_PBW-type-basis-osp} in terms of operators acting on $v_0$. Specifically the elements of this basis are $E^{\gamma_A}\Omega_{\lambda_A}$, for $A\in \mathbb{Y}$ with $\ell(\lambda_A)\leq p$.

In the special case where all $j$-valued entries of $D(\gamma)$ are located in the same row Lemma \ref{sec3_lemm_u(n)-action-on-Omega} can be made more precise.
\begin{lemm}
\label{sec3_lemm_u(n)-action-on-Omega-multi}
Let $\lambda\in \mathbb{P}$ and $\gamma\in M_{nn}(\N_0)$ with column sum $\lambda$. Let $j\in\{1,\dots,n\}$ and suppose there exists $k\in \{1,\dots,n\}$ such that $\gamma_{jk'}=0$ when $k'\neq k$. Then 
\begin{equation}
E_{ij}^m\Omega_{D(\gamma)}= \frac{\gamma_{jk}!}{(\gamma_{jk}-m)!} \Omega_{D(\gamma+m(e_{ik}-e_{jk}))},
	\end{equation}
for all $i\neq j$, and $m\leq \gamma_{jk}$. If $m>\gamma_{jk}$, then $E_{ij}^m\Omega_{D(\gamma)}=0$.
	\end{lemm}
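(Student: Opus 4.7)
My plan is to prove the identity by induction on $m \geq 0$, using Lemma~\ref{sec3_lemm_u(n)-action-on-Omega} at each step. The base case $m=0$ is trivial (both sides equal $\Omega_{D(\gamma)}$, with the convention $0!/0!=1$). For $m=1$, since $i\neq j$ the $\delta_{ij}$ term in Lemma~\ref{sec3_lemm_u(n)-action-on-Omega} vanishes, and by the hypothesis the sum over $k'$ collapses to the single term at $k'=k$, yielding $E_{ij}\Omega_{D(\gamma)} = \gamma_{jk}\Omega_{D(\gamma+e_{ik}-e_{jk})}$, which matches the claimed formula.

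For the inductive step the key observation is that the hypothesis ``all $j$-valued entries lie in column $k$'' propagates. Setting $\gamma' := \gamma + m(e_{ik}-e_{jk})$, the $j$-th row of $\gamma'$ is unchanged except at column $k$, because $i\neq j$ so $e_{ik}$ does not modify row $j$. Therefore $(\gamma')_{jk} = \gamma_{jk}-m$ and $(\gamma')_{jk'} = 0$ for $k'\neq k$, so $\gamma'$ satisfies the same single-row hypothesis as $\gamma$. The induction hypothesis gives
\begin{equation*}
E_{ij}^{m+1}\Omega_{D(\gamma)} = \frac{\gamma_{jk}!}{(\gamma_{jk}-m)!}\,E_{ij}\Omega_{D(\gamma')},
\end{equation*}
and applying Lemma~\ref{sec3_lemm_u(n)-action-on-Omega} to $\gamma'$ collapses the sum to
\begin{equation*}
E_{ij}\Omega_{D(\gamma')} = (\gamma_{jk}-m)\,\Omega_{D(\gamma+(m+1)(e_{ik}-e_{jk}))}.
\end{equation*}
Combining these two identities gives the formula at level $m+1$, with the prefactor simplifying to $\gamma_{jk}!/(\gamma_{jk}-m-1)!$, valid as long as $m+1 \leq \gamma_{jk}$.

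For the vanishing assertion, note that at $m=\gamma_{jk}$ the matrix $\gamma' = \gamma + \gamma_{jk}(e_{ik}-e_{jk})$ satisfies $(\gamma')_{jk'}=0$ for every $k'$. Lemma~\ref{sec3_lemm_u(n)-action-on-Omega} then gives $E_{ij}\Omega_{D(\gamma')}=0$, since every summand has coefficient $(\gamma')_{jk'}=0$ and the $\delta_{ij}$ term is absent. Hence $E_{ij}^{\gamma_{jk}+1}\Omega_{D(\gamma)}=0$ and $E_{ij}^m\Omega_{D(\gamma)}=0$ for all $m > \gamma_{jk}$.

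I do not anticipate any real obstacle beyond the bookkeeping of how $\gamma$ evolves under repeated application of $E_{ij}$; the whole proof is a straightforward iteration of Lemma~\ref{sec3_lemm_u(n)-action-on-Omega} in the degenerate situation where its sum contains a single non-zero term. The crucial structural input is that $i\neq j$ ensures each application of $E_{ij}$ merely shifts mass from entry $(j,k)$ to entry $(i,k)$ without spreading it across other columns, so the hypothesis on $\gamma$ is stable under the induction.
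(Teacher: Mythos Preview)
Your proof is correct and follows precisely the approach the paper intends: the lemma is stated without proof, introduced merely as the special case of Lemma~\ref{sec3_lemm_u(n)-action-on-Omega} in which all $j$-valued entries of $D(\gamma)$ lie in a single row, and your induction on $m$ makes this explicit. One cosmetic slip: in your explanatory sentence you write ``all $j$-valued entries lie in column $k$'', but by the definition of the exponent matrix $(\gamma_A)_{jk}$ counts $j$'s in the $k$-th \emph{row} of $D(\gamma)$; the mathematics is unaffected since you work directly with the matrix entries.
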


We are now in a position to prove the main theorem of this section.

\begin{theo}
\label{sec3_theo_PBW-type-basis-vector-variable-case}
Let $A\in \mathbb{E}(\lambda)$ and suppose $\gamma=\gamma_A$ is lower triangular. Then

\begin{equation}
E^{\gamma} \Omega_\lambda=\frac{\lambda!}{\diag (\gamma)!}\Omega_{D(\gamma)}=\frac{\lambda!}{\diag (\gamma)!}\Omega_A
	\end{equation}
where $\diag (\gamma)!:=\gamma_{11}!\cdots\gamma_{nn}!$ and $\lambda!=\lambda_1!\cdots\lambda_n!$.
	\end{theo}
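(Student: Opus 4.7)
The plan is to compute $E^\gamma\Omega_\lambda = E^\gamma\Omega_{D(\gamma_\lambda)}$ iteratively by applying the factors of $E^\gamma$ to $\Omega_\lambda$ in the prescribed right-to-left order, invoking Lemma~\ref{sec3_lemm_u(n)-action-on-Omega-multi} at each step. According to the definition of $E^\gamma$ in \eqref{sec2_eq_E-gamma-matrix}, the order of application is: for $k$ decreasing from $n$ to $2$, and within each such $k$ for $j$ decreasing from $k-1$ to $1$, we apply $E_{k,j}^{\gamma_{k,j}}$. Since $\gamma = \gamma_A$ is lower triangular with column sums $\lambda_1,\dots,\lambda_n$, these are exactly the factors of $E^\gamma$ that must be processed.

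The central step is to prove by induction along this sequence the invariant that, immediately before applying $E_{k,j}^{\gamma_{k,j}}$, the running matrix $\gamma^{\mathrm{cur}}$ has row $j$ with a single nonzero entry located at column $j$, equal to $\lambda_j - \sum_{k''=k+1}^n \gamma_{k'',j}$. The key observation is that row $j$ of the running matrix can only be altered by an operator $E_{k'',j'}^{\gamma_{k'',j'}}$ for which either $j' = j$ or $k'' = j$; in the former case $k'' > j$, and the operator has been processed iff $k'' > k$, while in the latter case the operator lies in the $k=j$ block, which is processed after the current block and so has not yet been applied. Under this invariant, Lemma~\ref{sec3_lemm_u(n)-action-on-Omega-multi} applied with lower index $j$, upper index $k$, unique column equal to $j$, and exponent $m = \gamma_{k,j}$ yields
\[
E_{k,j}^{\gamma_{k,j}}\Omega_{D(\gamma^{\mathrm{cur}})} = \frac{\gamma^{\mathrm{cur}}_{j,j}!}{(\gamma^{\mathrm{cur}}_{j,j} - \gamma_{k,j})!}\,\Omega_{D(\gamma^{\mathrm{cur}} + \gamma_{k,j}(e_{k,j} - e_{j,j}))},
\]
and the inductive step updates $\gamma^{\mathrm{cur}}$ and preserves the invariant for the next application.

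After all factors are applied, the accumulated matrix is exactly $\gamma$: every $\gamma_{k,j}$ with $k>j$ has been placed at position $(k,j)$, and the diagonal entry $(j,j)$ has decreased from $\lambda_j$ to $\lambda_j - \sum_{k>j}\gamma_{k,j} = \gamma_{j,j}$ by the column-sum condition. The product of scalar factors telescopes column by column,
\[
\prod_{k=j+1}^{n} \frac{(\lambda_j - \sum_{k''=k+1}^n \gamma_{k'',j})!}{(\lambda_j - \sum_{k''=k}^n \gamma_{k'',j})!} = \frac{\lambda_j!}{\gamma_{j,j}!},
\]
and multiplying over $j$ yields the overall scalar $\lambda!/\diag(\gamma)!$. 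The second equality $\Omega_{D(\gamma)} = \Omega_A$ is immediate from Lemma~\ref{sec3_lemm_exponent-matrix-tableau}, since $\gamma = \gamma_A$.

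The main obstacle is verifying the invariant above; one must check that the right-to-left processing order is precisely what keeps each row compatible with the single-column hypothesis of Lemma~\ref{sec3_lemm_u(n)-action-on-Omega-multi}. The lower-triangularity of $\gamma$ is essential here, as it guarantees that the only factors appearing in $E^\gamma$ are $E_{k,j}^{\gamma_{k,j}}$ with $k > j$, so that each row $j$ is acted upon exclusively in the manner described. Once this bookkeeping is settled, the rest is the telescoping computation indicated above.
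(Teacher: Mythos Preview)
Your proof is correct and follows essentially the same approach as the paper's own proof: both apply the factors of $E^\gamma$ to $\Omega_{D(\gamma_\lambda)}$ in the prescribed right-to-left order, invoke Lemma~\ref{sec3_lemm_u(n)-action-on-Omega-multi} at each step (exploiting that the relevant row of the running matrix is concentrated on the diagonal), and then observe that the resulting scalar factors telescope column by column to $\lambda!/\diag(\gamma)!$. Your formulation of the invariant and its verification via the bookkeeping on which operators can alter row~$j$ is exactly the content of the paper's step-by-step computation, just stated more explicitly as an induction.
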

\begin{proof}
Note first that $\Omega_A=\Omega_{D(\gamma)}$. 
Recall that $\Omega_\lambda=\Omega_{D(\gamma_\lambda)}$, where $\gamma_\lambda=\sum_{i=1}^n\lambda_i e_{ii}$. In a similar way we can write
\begin{equation}
\gamma= \sum_{i=1}^n\sum_{j=1}^i \gamma_{ij}e_{ij}
	= \gamma_\lambda +\sum_{i=2}^n\sum_{j=1}^{i-1} \gamma_{ij}(e_{ij}-e_{jj}).
	\end{equation}

Proving the theorem is then a matter of consecutive application of the operators $E_{ij}^{\gamma_{ij}}$ in the order given by the definition of $E^\gamma$ in \eqref{sec2_eq_E-gamma-matrix}, noting at each successive step that the conditions of Lemma \ref{sec3_lemm_u(n)-action-on-Omega-multi} are satisfied. To this end it should be noted that $\gamma$ has column sum $\lambda$ since $A\in \mathbb{E}(\lambda)$.
In broad strokes we proceed as follows. First note that 
\begin{equation}
\begin{split}
E_{n1}^{\gamma_{n1}}\cdots E_{n,n-1}^{\gamma_{n,n-1}}\Omega_{D(\gamma_\lambda)}
	&=
	\frac{\lambda_{n-1}!}{(\lambda_{n-1}-\gamma_{n,n-1})!}
	E_{n1}^{\gamma_{n1}}\cdots E_{n,n-2}^{\gamma_{n,n-2}}
	\Omega_{D(\gamma_\lambda+\gamma_{n,n-1}(e_{n,n-1}-e_{n-1,n-1}))}
	\\&=
	\left(
	\prod_{j=1}^{n-1}\frac{\lambda_j!}{(\lambda_j-\gamma_{nj})!}
	\right)
	\Omega_{D(\gamma_\lambda+\sum_{j=1}^{n-1}\gamma_{nj}(e_{nj}-e_{jj}))}.
	\end{split}
	\end{equation}
Continuing in the same way we get
\begin{equation*}
\begin{split}
E^\gamma\Omega_{D(\gamma_\lambda)} 
	&= 
	\prod_{k=2,\dots,n}^\rightarrow 
	(E_{k1}^{\gamma_{k1}}\cdots E_{k,k-1}^{\gamma_{k,k-1}})
	\Omega_{D(\gamma_\lambda)} 
	\\&=
	\left(
	\prod_{j=1}^{n-1}\frac{\lambda_j!}{(\lambda_j-\gamma_{nj})!}
	\right)
	\prod_{k=2,\dots,n-1}^\rightarrow 
	(E_{k1}^{\gamma_{k1}}\cdots E_{k,k-1}^{\gamma_{k,k-1}})
	\Omega_{D(\gamma_\lambda+\sum_{j=1}^{n-1}\gamma_{nj}(e_{nj}-e_{jj}))}
	\\&=
	\left(
	\prod_{j=1}^{n-1}\frac{\lambda_j!}{(\lambda_j-\gamma_{nj})!}\frac{(\lambda_j-\gamma_{nj})!}{(\lambda_j-\gamma_{nj}-\gamma_{n-1,j})!}\cdots\frac{(\lambda_j-\sum_{i=j+2}^n \gamma_{ij})!}{\lambda_j-\sum_{i=j+1}^n \gamma_{ij})!}
	\right)
	\Omega_{D(\gamma_\lambda+\sum_{i=2}^n\sum_{j=1}^{i-1}\gamma_{ij}(e_{ij}-e_{jj}))}
	\\&=
	\frac{\lambda!}{\diag (\gamma)!}
	\Omega_{D(\gamma)}.
	\end{split}
	\end{equation*}
This proves the theorem.
	\end{proof}

We note here that if $A$ is a s.s.\ Young tableau, then $\gamma_A$ is automatically lower triangular and Theorem \ref{sec3_theo_PBW-type-basis-vector-variable-case} applies. This leads to the following corollary which states the two expressions we have obtained for the elements of the basis $\{E^{\gamma_A}\xi_{\lambda_A}\}$ for $L(p)$ defined in Corollary \ref{sec2_coro_PBW-type-basis-osp}.
\begin{coro}
\label{sec3_coro_PBW-bases-Omega_A}
The simple $\mathfrak{osp}(1|2n)$-module $L(p)$ has a basis consisting of the elements 
\begin{equation}
\label{sec3_eq_PBW-bases-Omega_A}
E^{\gamma_A} \Omega_{\lambda_A}=\frac{\lambda_A!}{\diag(\gamma_A)!}\Omega_A
	\end{equation}
for all $A\in\mathbb{Y}$ with $\ell(\lambda_A)\leq p$. 
	\end{coro}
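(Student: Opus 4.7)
The plan is to derive this corollary directly from three earlier ingredients: Corollary \ref{sec2_coro_PBW-type-basis-osp}, Proposition \ref{sec3_prop_highest-weight-vector}, and Theorem \ref{sec3_theo_PBW-type-basis-vector-variable-case}. Theorem \ref{sec3_theo_PBW-type-basis-vector-variable-case} already performs the core combinatorial computation; what remains is to verify that its hypothesis is satisfied when $A$ is a semistandard Young tableau and to identify $\Omega_{\lambda_A}$ with the $\mathfrak{gl}(n)$-highest weight vector $\xi_{\lambda_A}$ appearing in Corollary \ref{sec2_coro_PBW-type-basis-osp}.

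First I would verify that $\gamma_A$ is lower triangular for every $A\in \mathbb{Y}$. This is immediate from the defining properties of a s.s.\ Young tableau: strict increase down each column forces the entry at position $(k,l)$ to be at least $k$, so every entry in row $k$ is at least $k$, giving $(\gamma_A)_{ik}=0$ whenever $i<k$. With this hypothesis in place, Theorem \ref{sec3_theo_PBW-type-basis-vector-variable-case} applies and yields the equality $E^{\gamma_A}\Omega_{\lambda_A}=\tfrac{\lambda_A!}{\diag(\gamma_A)!}\Omega_A$ appearing in \eqref{sec3_eq_PBW-bases-Omega_A}.

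Next I would connect $\Omega_{\lambda_A}$ to $\xi_{\lambda_A}$ by combining Proposition \ref{sec3_prop_highest-weight-vector} with Theorem \ref{sec2_theo_decomposition-into-gl(n)-modules}. The proposition supplies a non-zero $\mathfrak{gl}(n)$-highest weight vector $\Omega_{\lambda_A}\in L(p)$ of weight $\lambda_A+\tfrac{p}{2}$, while the theorem guarantees that the decomposition of $L(p)$ into simple $\mathfrak{gl}(n)$-modules is multiplicity-free, so such a highest weight vector is unique up to a non-zero scalar. Consequently $\Omega_{\lambda_A}=c_{\lambda_A}\xi_{\lambda_A}$ for some $c_{\lambda_A}\in\C\setminus\{0\}$, and applying $E^{\gamma_A}$ shows that $E^{\gamma_A}\Omega_{\lambda_A}$ is a non-zero scalar multiple of $E^{\gamma_A}\xi_{\lambda_A}$.

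Finally, since Corollary \ref{sec2_coro_PBW-type-basis-osp} provides $\{E^{\gamma_A}\xi_{\lambda_A}:A\in \mathbb{Y},\ \ell(\lambda_A)\leq p\}$ as a basis of $L(p)$, rescaling each element by a non-zero scalar preserves the basis property, so $\{E^{\gamma_A}\Omega_{\lambda_A}:A\in \mathbb{Y},\ \ell(\lambda_A)\leq p\}$ is also a basis. Together with the identity from the second step this gives exactly \eqref{sec3_eq_PBW-bases-Omega_A}. I do not anticipate any substantial obstacle: this corollary is essentially a repackaging of Theorem \ref{sec3_theo_PBW-type-basis-vector-variable-case} specialized to s.s.\ tableaux, with the identification $\Omega_\lambda \propto \xi_\lambda$ supplied by Proposition \ref{sec3_prop_highest-weight-vector}.
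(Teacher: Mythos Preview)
Your proposal is correct and follows essentially the same approach as the paper: the paper notes that for a s.s.\ Young tableau $A$ the exponent matrix $\gamma_A$ is automatically lower triangular, so Theorem \ref{sec3_theo_PBW-type-basis-vector-variable-case} applies, and then invokes Corollary \ref{sec2_coro_PBW-type-basis-osp} together with the identification of $\Omega_{\lambda_A}$ as the $\mathfrak{gl}(n)$-highest weight vector from Proposition \ref{sec3_prop_highest-weight-vector}. You have simply spelled out these steps in slightly more detail than the paper does.
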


The two expressions for the basis elements presented in Corollary \ref{sec3_coro_PBW-bases-Omega_A} may be described as follows. On the one hand the expression $E^{\gamma_A}\Omega_{\lambda_A}$ gives the basis elements as monomials in the negative root vectors $E_{ij}$, for $i>j$, of $\mathfrak{gl}(n)$ acting on $\mathfrak{gl}(n)$-highest weight vectors in $L(p)$. 
This expression makes it clear how the basis behaves under the branching $\mathfrak{osp}(1|2n)\supset \mathfrak{gl}(n)$.
On the other hand the expression $\frac{\lambda_A!}{\diag(\gamma_A)!}\Omega_A$ gives the basis elements as polynomials in the positive generators $B_j^+$ of $\mathfrak{osp}(1|2n)$ acting on the $\mathfrak{osp}(1|2n)$-lowest weight vector $v_0$.
Such expressions are useful in the context of parastatistics where $L(p)$ is identified with the paraboson Fock space, the $B_j^+$'s are interpreted as creation operators and $v_0$ as the vacuum. Using the second expression of the basis we can then present any state in the paraboson Fock space as a polynomial of creation operators acting on the vacuum.

An illustrative example of the expressions \eqref{sec3_eq_PBW-bases-Omega_A} can be found in Section \ref{sec5} were we describe the results of this paper in the case $n=3$.

To end this section we present the necessary and sufficient conditions on $\gamma$ for $D(\gamma)$ to be a s.s. Young tableau in $\mathbb{Y}(\lambda)$.
\begin{lemm}
\label{sec3_lemm_betweenness-like-condition}
The tableau $D(\gamma)$ is a s.s. Young tableau in $\mathbb{Y}(\lambda)$ if and only if $\gamma$ is a lower triangular matrix in $M_{nn}(\N_0)$ with column sum $\lambda\in \mathbb{P}$ and which satisfies 
\begin{equation}
\label{sec3_eq_betweenness-like-condition}
\sum_{k=i}^{j} \gamma_{ki}\geq \sum_{k=i+1}^{j+1}\gamma_{k,i+1},
	\end{equation}
for $1\leq i\leq j\leq n-1$. 
	\end{lemm}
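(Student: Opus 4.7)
The plan is to decode the semistandard condition directly in terms of $\gamma$. By construction of $D(\gamma)$, its shape equals the tuple of column sums of $\gamma$, and each of its rows is non-decreasing from left to right. Hence $D(\gamma)\in\mathbb{Y}(\lambda)$ if and only if $\gamma$ has column sum $\lambda\in\mathbb{P}$ and the columns of $D(\gamma)$ are strictly increasing from top to bottom. What must be shown is that this column-strict property is equivalent to $\gamma$ being lower triangular together with the inequalities \eqref{sec3_eq_betweenness-like-condition}.

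First I would dispose of the lower triangular condition. If the columns of $D(\gamma)$ are strictly increasing, then in particular $D(\gamma)(j,1)\geq j$ for each $j$; combined with non-decreasing rows this forces every entry in row $j$ to be $\geq j$, i.e.\ $\gamma_{ij}=0$ for $i<j$. Conversely, lower triangularity is precisely the statement that row $j$ has all entries $\geq j$, which is a necessary condition for column-strictness to be achievable at all.

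The main step is to translate column-strictness into a numerical inequality on $\gamma$. Define the prefix count $S_j(i):=\sum_{k=1}^{i}\gamma_{kj}$, which counts the positions in row $j$ whose entry is $\leq i$. Since rows are non-decreasing, these positions form the prefix $\{1,\dots,S_j(i)\}$. The condition $D(\gamma)(j,l)<D(\gamma)(j+1,l)$ for every $(j+1,l)\in\lambda$ is then equivalent to the statement that for every value $i$, any column $l$ with $D(\gamma)(j+1,l)\leq i$ must satisfy $D(\gamma)(j,l)\leq i-1$. Numerically this becomes $S_{j+1}(i)\leq S_j(i-1)$ for all relevant $i$ and $j$.

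Finally, under lower triangularity $S_j(i)=\sum_{k=j}^{\min(i,n)}\gamma_{kj}$ for $i\geq j$ and $S_j(i)=0$ otherwise. Writing $r=j$ and $s=i-1$, the inequality $S_{j+1}(i)\leq S_j(i-1)$ becomes $\sum_{k=r+1}^{s+1}\gamma_{k,r+1}\leq \sum_{k=r}^{s}\gamma_{kr}$, which is exactly \eqref{sec3_eq_betweenness-like-condition}. The cases $s<r$ hold automatically (both sides vanish by the lower triangular assumption and the empty-sum convention), and the boundary case $s=n$ degenerates to $\lambda_{j+1}\leq \lambda_j$, which is already guaranteed by $\lambda\in\mathbb{P}$; so the effective range is precisely $1\leq r\leq s\leq n-1$. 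This handles both directions of the equivalence simultaneously. The only real difficulty is bookkeeping the index shifts and the vanishing ranges of $\gamma$; I do not anticipate a deeper conceptual obstacle.
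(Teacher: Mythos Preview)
Your proof is correct and follows essentially the same approach as the paper's: both translate column-strictness of $D(\gamma)$ into the prefix-count inequality $S_{j+1}(i)\leq S_j(i-1)$ and then use lower triangularity to rewrite this as \eqref{sec3_eq_betweenness-like-condition}. Your version is in fact slightly more complete, since you explicitly argue that column-strictness forces lower triangularity (via $D(\gamma)(j,1)\geq j$), a direction the paper leaves implicit; the remark about ``$s=n$'' is harmless but unnecessary, since $s=i-1\leq n-1$ automatically.
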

\begin{proof}
The tableau $D(\gamma)$ has shape $\lambda$ if and only if $\gamma$ has column sum $\lambda$.
By definition $D(\gamma)$ only has entries in the set $\{1,\dots,n\}$ and the entries are non-decreasing from left to right along each row. 
It only remains to check that \eqref{sec3_eq_betweenness-like-condition} is a necessary and sufficient condition for the entries of $D(\gamma)$ to be increasing from top to bottom along each column.
Since the entries are non-decreasing along the rows, this is equivalent to requiring that for each pair $(i,j)$ with $1\leq i<j\leq n-1$, the number of entries in the $i$'th row of $D(\gamma)$ taking values $1,\dots,j$ is greater then the number of entries in the $(i+1)$'th row of $D(\gamma)$ taking values $1,\dots,j+1$. 
Recalling the definition of $D(\gamma)$, this happens precisely if 
\begin{equation}
\sum_{k=1}^{j} \gamma_{ki}\geq \sum_{k=1}^{j+1}\gamma_{k,i+1}.
	\end{equation}
By apply the assumption that $\gamma$ is lower triangular, we see that this is exactly the condition \eqref{sec3_eq_betweenness-like-condition}.
	\end{proof}

\section{Raising and lowering operators}
\label{sec4}

In this section we study the branching $\mathfrak{osp}(1|2n)\supset \mathfrak{gl}(n)$ in more detail. To do so, we will construct raising and lowering operators $z_j^\pm$ and $z_{ij}^\pm$, for $1\leq i\leq j\leq n$, that act on the space of $\mathfrak{gl}(n)$-highest weight vectors of any given $\mathfrak{osp}(1|2n)$-module. 
These raising and lowering operators generate an associative algebra $Z(\mathfrak{osp}(1|2n),\mathfrak{gl}(n))$ called the Mickelsson-Zhelobenko algebra. The literature on the application of raising and lowering operators and Mickelsson-Zhelobenko algebras to the theory of Lie superalgebras is limited and this work represents for the first time such constructions being applied to the branching $\mathfrak{osp}(1|2n)\supset\mathfrak{gl}(n)$. In particular, the identities obtained in Theorem \ref{sec4_theo_mickelsson-zhelobenko-generator} are entirely new.
In the context of Lie algebras, abstract algebras of raising and lowering operators where introduced by Mickelsson in \cite{Mickelsson-1973}. These algebras are commonly known as Mickelsson algebras. Zhelobenko expanded on the theory of Mickelsson algebras in \cite{Zhelobenko-1983,Zhelobenko-1989} using the extremal projector $\mathbf{p}$, introduced by Asherova, Smirnov and Tolstoy in \cite{Asherova-Smirnov-Tolstoy-1979}, to construct generators for the extension of the Mickelsson algebra by the field of fractions of the Cartan subalgebra. This extension is called the extended Mickelsson algebra or the Mickelsson-Zhelobenko algebra.
Many aspects of this theory extend to Lie superalgebras, which is the situation we are dealing with in this paper. Relevant papers on Lie superalgebraic applications include \cite{Molev-2011,Tolstoy-1985}. 


We begin the section with a brief explanation of the aforementioned concepts. Following that we present precise expressions of the operators $z_j^\pm$ and $z_{ij}^\pm$ that generate $Z(\mathfrak{osp}(1|2n),\mathfrak{gl}(n))$, see Theorem \ref{sec4_theo_mickelsson-zhelobenko-generator}.
We then apply the raising and lowering operators to the $\mathfrak{osp}(1|2n)$-module $L(p)$. By using the expressions of $z_j^\pm$ given in Theorem \ref{sec4_theo_mickelsson-zhelobenko-generator} we obtain formulas for the action of $\mathfrak{osp}(1|2n)$ on the 
vectors in $L(p)$ of the form $E^{\gamma}\Omega_{\lambda}$. 
We explain what these formulas yield about the matrix elements of the action of $\mathfrak{osp}(1|2n)$ on the elements of the basis $\{E^{\gamma_A}\Omega_{\lambda_A}\}$ for $L(p)$.

Finally we use the raising and lowering operators to express the elements of the GZ-basis for $L(p)$ as monomials of operators acting on the $\mathfrak{osp}(1|2n)$-lowest weight vector $v_0$ of $L(p)$, something that until now was missing in the literature.
Using results from \cite{Molev-Yakimova-2018} we conclude that there is a triangular transition matrix connecting the GZ-basis and the $\{E^{\gamma_A}\Omega_{\lambda_A}\}$ basis for $L(p)$, see Theorem \ref{sec4_theo_triangular-transition-matrix}. We briefly mention how such a transition matrix can be used to express any vector in the the GZ-basis as a polynomial in the generators $B_j^+$ of $\mathfrak{osp}(1|2n)$, for $j\in\{1,\dots,n\}$, acting on $v_0$. 
The entries of this transition matrix are calculated explicitly in the case $n=3$, this is done in Proposition~\ref{sec5_prop_transition-matrix-n=3}.

\subsection{The Mickelsson-Zhelobenko algebra $Z(\mathfrak{osp}(1|2n),\mathfrak{gl}(n))$}
\label{sec4.1}

To keep the notation in this section manageable, we introduce the shorthand notation $\mathfrak{g}:=\mathfrak{osp}(1|2n)$ and $\mathfrak{t}:=\mathfrak{gl}(n)$.
We recall that $\mathfrak{t}$ has triangular decomposition 
\begin{equation}
\mathfrak{t}= \mathfrak{t}^+\oplus \mathfrak{h} \oplus \mathfrak{t}^-,
	\end{equation}
where $\mathfrak{t}^+$ and $\mathfrak{t}^-$ denote the subalgebras spanned by the positive and negative roots vectors of $\mathfrak{t}$ respectively, see \eqref{sec2_eq_positive-root-algebra} and \eqref{sec2_eq_negative-root-algebra}.
We denote by $\mathfrak{J}=U(\mathfrak{g})\mathfrak{t}^+$ the left ideal of $U(\mathfrak{g})$ generated by $\mathfrak{t}^+$ and consider its normalizer $\Norm \mathfrak{J}$ as a subalgebra of $U(\mathfrak{g})$
\begin{equation}
\Norm \mathfrak{J} := \{ v\in U(\mathfrak{g}) : \mathfrak{J}v\subset \mathfrak{J} \}.
	\end{equation}
The ideal $\mathfrak{J}$ is then a two-sided ideal of $\Norm \mathfrak{J}$ and the Mickelsson algebra is defined as the quotient algebra
\begin{equation}
S(\mathfrak{g},\mathfrak{t}):= (\Norm \mathfrak{J})/\mathfrak{J}.
	\end{equation}
The Mickelsson algebra can be considered as the subalgebra of $\mathfrak{t}$-highest weight vectors in the quotient algebra
\begin{equation}
M(\mathfrak{g},\mathfrak{t}):= U(\mathfrak{g})/\mathfrak{J},
	\end{equation}
in the sense that $S(\mathfrak{g},\mathfrak{t})$ consists exactly of the elements in $M(\mathfrak{g},\mathfrak{t})$ that are annihilated by $\mathfrak{t}^+$.

Letting $R(\mathfrak{h})$ denote the field of fractions of the commutative algebra $U(\mathfrak{h})$ we define the extension of $S(\mathfrak{g},\mathfrak{t})$ by $R(\mathfrak{h})$ to be
\begin{equation}
Z(\mathfrak{g},\mathfrak{t})
	:= S(\mathfrak{g},\mathfrak{t})\otimes_{U(\mathfrak{h})} R(\mathfrak{h}).
	\end{equation} 
This extension is called the Mickelsson-Zhelobenko algebra or extended Mickelsson algebra. We could equivalently have defined it as the quotient algebra $(\Norm \mathfrak{J}')/\mathfrak{J}'$, where $\mathfrak{J}':=U'(\mathfrak{g})\mathfrak{t}^+$ and $U'(\mathfrak{g}):= U(\mathfrak{g})\otimes_{U(\mathfrak{h})} R(\mathfrak{h})$.
Similarly as to how we considered $S(\mathfrak{g},\mathfrak{t})$ as a subalgebra of $M(\mathfrak{g},\mathfrak{t})$, we can consider $Z(\mathfrak{g},\mathfrak{t})$ as a subalgebra of $M'(\mathfrak{g},\mathfrak{t}):=M(\mathfrak{g},\mathfrak{t})\otimes_{U(\mathfrak{h})} R(\mathfrak{h})$. 

To describe the elements and generators of the Mickelsson-Zhelobenko algebra $Z(\mathfrak{g},\mathfrak{t})$ we need the extremal projector of $\mathfrak{t}$.
To define this extremal projector we need to construct an algebra of formal series of monomials in the root vectors of $\mathfrak{t}$.
To this end we let $\Delta^+=\{\alpha_1,\dots,\alpha_m\}$ denote the set of positive roots of $\mathfrak{t}$ and let $E_\alpha$ denote the root vectors of $\alpha\in \Delta^+$. 
We consider for any weight $\mu\in\mathfrak{h}^*$ the vector space $F_\mu(\mathfrak{t})$ over $R(\mathfrak{h})$ of formal series of monomials of weight $\mu$, that is, monomials of the form
\begin{equation}
\label{sec4_eq_weight-monomial-t}
E_{-\alpha_1}^{s_1}\cdots E_{-\alpha_m}^{s_m}E_{\alpha_m}^{t_m}\cdots E_{\alpha_1}^{t_1},
	\end{equation} 
for which $s_1,t_1,\dots,s_m,t_m\in\N_0$ and
\begin{equation}
\mu=(t_1-s_1)\alpha_1+\cdots + (t_m-s_m)\alpha_m.
	\end{equation}
This definition is independent of the choice of ordering of the elements in $\Delta^+$.
We define the space $F(\mathfrak{t})$ to be the direct sum of the spaces $F_\mu(\mathfrak{t})$
\begin{equation}
F(\mathfrak{t}):=\bigoplus_{\mu\in\mathfrak{h}^*} F_\mu(\mathfrak{t}).
	\end{equation}
It can be proven that $F(\mathfrak{t})$ is an associative algebra with respect to multiplication of formal series, see \cite{Zhelobenko-1989}. We equip $F(\mathfrak{t})$ with a Hermitian anti-involution given by $E_\alpha^*:=E_{-\alpha}$ and $E_{ii}^*:=E_{ii}$.

Any element of $F(\mathfrak{t})$ can be considered as an operator acting on $M'(\mathfrak{g},\mathfrak{t})$. That is, 
\begin{equation}
\label{sec4_eq_action-formal-series}
fu:=\sum_{s,t}   E_{-\alpha_1}^{s_1}\cdots E_{-\alpha_m}^{s_m}E_{\alpha_m}^{t_m}\cdots E_{\alpha_1}^{t_1}q_{s,t} u,
	\end{equation}
for any $u\in M'(\mathfrak{g},\mathfrak{t})$ and $f\in F_\mu(\mathfrak{t})$. Here we expressed $f$ as a formal series of monomials \eqref{sec4_eq_weight-monomial-t} with coefficients $q_{s,t}\in R(\mathfrak{h})$. Due to the quotient by $\mathfrak{J}'$ in the definition of $M'(\mathfrak{g},\mathfrak{t})$ all but a finite number of summands in \eqref{sec4_eq_action-formal-series} are congruent to zero modulo $\mathfrak{J}'$. 

We can now define the extremal projector of $\mathfrak{t}$ as an element in $F(\mathfrak{t})$ of weight zero. See \cite{Molev-2002} for details on the extremal projector for $\mathfrak{t}$.

\begin{theo}
The extremal projector $\mathbf{p}\in F(\mathfrak{t})$ is the unique element (up to a factor in $R(\mathfrak{h})$) for which $\mathbf{p}^*=\mathbf{p}$, $\mathbf{p}^2=\mathbf{p}$ and
\begin{equation}
\label{sec4_eq_extremal_projector_identity}
E_\alpha \mathbf{p} = \mathbf{p} E_{-\alpha} = 0, \quad  (\alpha\in \Delta^+).
	\end{equation}
	\end{theo}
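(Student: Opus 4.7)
The plan is to follow the classical Asherova--Smirnov--Tolstoy construction: build $\mathbf{p}$ explicitly as an ordered product over $\Delta^+$ of $\mathfrak{sl}(2)$-type local factors, verify the annihilation identities, idempotence, and self-adjointness, and then argue uniqueness separately. Concretely, fix a normal (convex) ordering $\alpha_1 \prec \cdots \prec \alpha_m$ of $\Delta^+$ and, for each $\alpha \in \Delta^+$ with the associated $\mathfrak{sl}(2)$-triple $(E_\alpha, H_\alpha, E_{-\alpha})$, set
\[
\mathbf{p}_\alpha := \sum_{k=0}^{\infty}\frac{(-1)^k}{k!}\,\varphi_\alpha^{(k)}(H_\alpha)\, E_{-\alpha}^{k} E_{\alpha}^{k}\;\in\; F_0(\mathfrak{t}),
\]
where $\varphi_\alpha^{(k)}(H_\alpha) \in R(\mathfrak{h})$ is the unique rational function in $H_\alpha$ (a product of linear factors shifted by Coxeter-type constants) that is determined by the recurrence ensuring $E_\alpha \mathbf{p}_\alpha = 0$ at the $\mathfrak{sl}(2)$-level, with the normalization $\varphi_\alpha^{(0)} = 1$. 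Then define $\mathbf{p} := \mathbf{p}_{\alpha_1}\mathbf{p}_{\alpha_2}\cdots\mathbf{p}_{\alpha_m}$.

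For the annihilation identities, I would first establish the rank-one facts $E_\alpha \mathbf{p}_\alpha = 0 = \mathbf{p}_\alpha E_{-\alpha}$ by a direct $\mathfrak{sl}(2)$ computation: commuting $E_\alpha$ through each summand via $[E_\alpha, E_{-\alpha}^k] = k E_{-\alpha}^{k-1}(H_\alpha - k + 1)$ yields a telescoping series that collapses thanks to the chosen normalization of $\varphi_\alpha^{(k)}$. To promote these to $E_\beta \mathbf{p} = 0$ for every $\beta \in \Delta^+$, I would exploit the defining property of a normal ordering: a commutator $[E_\beta, E_{\pm\alpha_i}]$ with $\alpha_i \prec \beta$ produces only root vectors $E_\gamma$ whose roots lie strictly between $\alpha_i$ and $\beta$ in the ordering, so such corrections can be absorbed into the trailing factors. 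Moving $E_\beta$ stepwise past $\mathbf{p}_{\alpha_1},\dots,\mathbf{p}_{\alpha_{i-1}}$ and invoking the rank-one identity when $E_\beta$ meets $\mathbf{p}_\beta$ gives the full identity by induction on the ordering; the mirror computation on the right yields $\mathbf{p} E_{-\beta} = 0$.

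Idempotence follows by writing $\mathbf{p}^2 = \mathbf{p}\cdot\mathbf{p}$ and expanding the right factor in the PBW basis of $F(\mathfrak{t})$: every nonconstant weight-zero monomial contains either an $E_\alpha$ or, after reordering, a trailing $E_{-\beta}$ factor, which the left factor $\mathbf{p}$ annihilates, leaving only the leading coefficient $\varphi_\alpha^{(0)} = 1$. For self-adjointness one notes that $\mathbf{p}^*$ satisfies the same three conditions as $\mathbf{p}$, so by the uniqueness statement $\mathbf{p}^* = f\,\mathbf{p}$ for some $f \in R(\mathfrak{h})$; idempotence forces $f^2 = f$ in the integral domain $R(\mathfrak{h})$, hence $f = 1$. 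For uniqueness itself, I would expand any candidate $\mathbf{p}'$ satisfying $E_\alpha \mathbf{p}' = \mathbf{p}' E_{-\alpha} = 0$ as a formal series of weight-zero PBW monomials ordered by total degree $\sum(s_i + t_i)$, show that the leading term lies in $R(\mathfrak{h})$, and observe that the annihilation conditions recursively determine every higher coefficient from this leading scalar; therefore $\mathbf{p}' \in R(\mathfrak{h})\cdot\mathbf{p}$, and the remaining scalar is fixed by the idempotence constraint.

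The main obstacle is the stepwise commutation argument establishing $E_\beta \mathbf{p} = 0$ for roots $\beta$ different from the root of the factor being traversed. Its success depends essentially on both the precise rational-function form of $\varphi_\alpha^{(k)}$ (so that each local factor is a genuine $\mathfrak{sl}(2)$-extremal projector) and the convexity of the normal ordering (so that all intermediate commutators stay within $F(\mathfrak{t})$ as convergent formal series of weight-zero monomials). Once this combinatorial-algebraic identity is in place, all other assertions of the theorem follow from formal manipulation.
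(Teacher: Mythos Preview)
The paper does not prove this theorem at all: it is stated as a known result, and immediately afterwards the authors write ``The extremal projector for $\mathfrak{t}$ can be constructed explicitly as follows, see \cite{Molev-2002},'' giving the product formula $\mathbf{p}=\prod \mathbf{p}_{ij}$ over a normal ordering of $\Delta^+$ with references to Asherova--Smirnov--Tolstoy and Zhelobenko for the underlying theory. So there is no ``paper's own proof'' to compare against; the result is imported from the literature.

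Your sketch is precisely the classical construction found in those references, and the overall architecture (local $\mathfrak{sl}(2)$ projectors, ordered product, normal-ordering commutation argument, then idempotence and uniqueness) is correct. Two small remarks. First, your self-adjointness argument invokes uniqueness, so be sure your uniqueness proof uses only the annihilation identities $E_\alpha\mathbf{p}'=\mathbf{p}'E_{-\alpha}=0$ and not the condition $(\mathbf{p}')^*=\mathbf{p}'$; you do state it that way, but it is worth flagging the potential circularity. Second, the step you correctly identify as the main obstacle---pushing $E_\beta$ through the product $\mathbf{p}_{\alpha_1}\cdots\mathbf{p}_{\alpha_m}$---is genuinely delicate: the commutators $[E_\beta,E_{-\alpha_i}]$ produce root vectors that need not all lie strictly between $\alpha_i$ and $\beta$ in an arbitrary normal ordering, and the clean way through in the literature is often to first prove that the product is independent of the choice of normal ordering, then choose an ordering adapted to each simple $E_\beta$ so that $\mathbf{p}_\beta$ sits at the far left (resp.\ right). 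Your inductive picture is morally right but would need this extra ingredient to close.
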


The extremal projector for $\mathfrak{t}$ can be constructed explicitly as follows, see \cite{Molev-2002}. 
The positive roots of $\mathfrak{t}$ can be explicitly described as $\alpha_{ij}=\epsilon_i-\epsilon_j$, for $1\leq i<j\leq n$, that is, 
$\Delta^+=\{\alpha_{ij}:1\leq i<j\leq n\}$.
The root vector corresponding to $\alpha_{ij}$ is then $E_{ij}$.
Consider for each root $\alpha_{ij}$ the element $\mathbf{p}_{ij}\in F(\mathfrak{t})$ defined as
\begin{equation}
\label{sec4_eq_partial-extremal-projector}
\mathbf{p}_{ij}
	:= \sum_{k=0}^\infty \frac{(-1)^k}{k!} E_{ji}^k E_{ij}^k \frac{1}{(h_i-h_j+1)_k},
	\end{equation}
where $h_i:= E_{ii}-i+1$, for all $i\in\{1,\dots,n\}$, and where $(x)_k:=x(x+1)\cdots(x+k-1)$ is a Pochhammer symbol.
The extremal projector for $\mathfrak{t}$ is defined as a product 
\begin{equation}
\label{sec4_eq_extremal-projector}
\mathbf{p}:= \prod_{1\leq i<j\leq n} \mathbf{p}_{ij},
	\end{equation}
where the multiplication is determined by a normal order on the roots in $\Delta^+$. The choice of order does not matter as long as it is normal, meaning that for any composite root $\gamma=\alpha+\beta$, with $\alpha,\beta,\gamma\in\Delta^+$, it must hold that $\alpha<\gamma<\beta$ or $\beta<\gamma<\alpha$.
We shall make use of the normal ordering of $\Delta^+$ given by
\begin{equation}
\alpha_{12}<\alpha_{13}<\alpha_{23}<\cdots
	<\alpha_{1n}<\cdots<\alpha_{n-1,n}.
	\end{equation}	
The extremal projector for $\mathfrak{t}$ is then 
\begin{equation}
\mathbf{p}=\mathbf{p}_{12}\mathbf{p}_{13}\mathbf{p}_{23}\cdots\mathbf{p}_{1n}\cdots\mathbf{p}_{n-1,n}.
	\end{equation}	

Following a similar process one can define extemal projectors for many types of Lie (super)algebras, affine Kac-Moody (super)algebras and their quantum analogs, see \cite{Tolstoy-2010}.

The embedding $\mathfrak{t}\subset \mathfrak{g}$ is reductive, so we can decompose $\mathfrak{g}$ under the adjoint representation of $\mathfrak{t}$, that is, $\mathfrak{g}=\mathfrak{t}\oplus V$, where $V$ is the complementary $\mathfrak{t}$-module having a basis consisting of the elements
$B_j^\pm$ and $\{B_i^\pm,B_j^\pm\}$, for $1\leq i\leq j\leq n$.
We let $D$ denote the vector space given by the linear span over $R(\mathfrak{h})$ of the monomials in the basis elements of $V$.
There is then a natural imbedding of $D$ into $M'(\mathfrak{g},\mathfrak{t})$ such that 
\begin{equation}
M'(\mathfrak{g},\mathfrak{t})=D\oplus \mathfrak{t}^-M'(\mathfrak{g},\mathfrak{t}).
	\end{equation}
We let $\delta$ denote the projection from $M'(\mathfrak{g},\mathfrak{t})$ onto $D$ parallel to $\mathfrak{t}^-M'(\mathfrak{g},\mathfrak{t})$.

The following result is a direct translation from the standard Lie algebraic theory of extremal projectors and Mickelsson-Zhelobenko algebras as presented in \cite{Zhelobenko-1989}.
\begin{theo}
\label{sec4_theo_extremal-projector-isomorphism}
Then extremal projector $\mathbf{p}$ is an isomorphism of vector spaces from $D\to Z(\mathfrak{g},\mathfrak{t})$, whose inverse is the restriction of $\delta$ to $Z(\mathfrak{g},\mathfrak{t})$.
	\end{theo}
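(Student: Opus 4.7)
The plan is to verify the two one-sided inverse identities $\delta\circ \mathbf{p}\big|_D = \mathrm{id}_D$ and $\mathbf{p}\circ\delta\big|_{Z(\mathfrak{g},\mathfrak{t})} = \mathrm{id}_{Z(\mathfrak{g},\mathfrak{t})}$, which together upgrade $\mathbf{p}$ and $\delta$ to mutually inverse linear bijections between $D$ and $Z(\mathfrak{g},\mathfrak{t})$. Before that, I would record that $\mathbf{p}(D)\subseteq Z(\mathfrak{g},\mathfrak{t})$: the identity $E_\alpha\mathbf{p}=0$ for $\alpha\in\Delta^+$ implies that $\mathbf{p} d\in M'(\mathfrak{g},\mathfrak{t})$ is annihilated by every positive root vector of $\mathfrak{t}$, hence is a $\mathfrak{t}$-highest weight vector, i.e.\ an element of $Z(\mathfrak{g},\mathfrak{t})$; and $\delta$ obviously restricts to a map $Z(\mathfrak{g},\mathfrak{t})\to D$. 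Throughout, I invoke the well-definedness of the action \eqref{sec4_eq_action-formal-series} of $F(\mathfrak{t})$ on $M'(\mathfrak{g},\mathfrak{t})$ already established in the excerpt.

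For $\delta\circ\mathbf{p}\big|_D=\mathrm{id}_D$, I would expand each factor of the normally ordered product $\mathbf{p}=\mathbf{p}_{12}\mathbf{p}_{13}\cdots\mathbf{p}_{n-1,n}$ as $\mathbf{p}_{ij}=1+X_{ij}$, where
\begin{equation*}
X_{ij}=\sum_{k\geq 1}\frac{(-1)^k}{k!}\,E_{ji}^{k}E_{ij}^{k}\,\frac{1}{(h_i-h_j+1)_k}.
\end{equation*}
Every monomial in $X_{ij}$ begins on the left with $E_{ji}\in\mathfrak{t}^-$, so $X_{ij}$ maps $M'(\mathfrak{g},\mathfrak{t})$ into $\mathfrak{t}^- M'(\mathfrak{g},\mathfrak{t})$. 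Applying the factors of $\mathbf{p}$ from right to left to $d\in D$ therefore keeps the decomposition of the form $d+(\text{element of }\mathfrak{t}^- M'(\mathfrak{g},\mathfrak{t}))$ stable, yielding $\mathbf{p} d\equiv d\pmod{\mathfrak{t}^- M'(\mathfrak{g},\mathfrak{t})}$ and hence $\delta(\mathbf{p} d)=d$.

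For $\mathbf{p}\circ\delta\big|_{Z(\mathfrak{g},\mathfrak{t})}=\mathrm{id}_{Z(\mathfrak{g},\mathfrak{t})}$, I would fix $z\in Z(\mathfrak{g},\mathfrak{t})$ and write $z=d+w$ with $d=\delta(z)\in D$ and $w\in\mathfrak{t}^- M'(\mathfrak{g},\mathfrak{t})$. Since $w$ is a sum of terms of the form $E_{-\alpha}\cdot u$ with $\alpha\in\Delta^+$, $u\in M'(\mathfrak{g},\mathfrak{t})$, the identity $\mathbf{p} E_{-\alpha}=0$ forces $\mathbf{p} w=0$, so $\mathbf{p} z=\mathbf{p} d$. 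On the other hand, $z$ being a $\mathfrak{t}$-highest weight vector means $E_{ij}z=0$ for all $i<j$; the explicit formula \eqref{sec4_eq_partial-extremal-projector} then gives $\mathbf{p}_{ij}z=z$, and iterating through the normal product yields $\mathbf{p} z=z$. Combining, $\mathbf{p}(\delta(z))=\mathbf{p} d=z$, which completes the argument.

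The step that takes the most care is Step~2: one has to be sure that the formal-series manipulations descend correctly to $M'(\mathfrak{g},\mathfrak{t})$, and in particular that writing $X_{ij}M'(\mathfrak{g},\mathfrak{t})\subseteq \mathfrak{t}^- M'(\mathfrak{g},\mathfrak{t})$ is compatible with passing composed factors $\mathbf{p}_{i'j'}\mathbf{p}_{ij}$ past one another (which is already encoded in the normal-ordering convention). The superalgebra nature of $\mathfrak{g}$ introduces no additional obstruction here, because the positive root vectors entering the extremal projector belong to the even subalgebra $\mathfrak{t}=\mathfrak{gl}(n)$, so the combinatorics of $\mathbf{p}$ are identical to the classical Lie-algebra case treated in \cite{Zhelobenko-1989}, and the superalgebra only enters implicitly through the basis of $V$ used to define $D$.
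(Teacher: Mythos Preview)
The paper does not supply its own proof of this theorem; it merely states the result as ``a direct translation from the standard Lie algebraic theory of extremal projectors and Mickelsson-Zhelobenko algebras as presented in \cite{Zhelobenko-1989}.'' Your argument is correct and is precisely the standard Zhelobenko-style proof: the decomposition $\mathbf{p}_{ij}=1+X_{ij}$ with $X_{ij}M'\subseteq \mathfrak{t}^-M'$ gives $\delta\circ\mathbf{p}|_D=\mathrm{id}_D$, while $\mathbf{p}E_{-\alpha}=0$ together with $\mathbf{p}z=z$ for $\mathfrak{t}$-highest weight vectors $z$ gives the other inverse identity; your closing remark that the super structure is irrelevant here (since $\mathbf{p}$ is built entirely from the even subalgebra $\mathfrak{t}=\mathfrak{gl}(n)$) is exactly the justification the paper alludes to with the phrase ``direct translation.''
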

This result tells us in particular that $Z(\mathfrak{g},\mathfrak{t})$ is spanned by elements of the form $\mathbf{p}v$, for $v\in D$. As we shall see below in Corollary \ref{sec4_coro_mickelsson-zhelobenko-generator}, $Z(\mathfrak{g},\mathfrak{t})$ is in fact generated by the elements
\begin{equation}
\label{sec4_eq_mickelsson-zhelobenko-generator}
\mathbf{p}B_j^\pm
\quad\text{ and }\quad
\mathbf{p}\{B_i^\pm,B_j^\pm\}, \quad (1\leq i\leq j\leq n).
	\end{equation}

We will now give explicit expressions for these elements.
To do so, we need to introduce notation for relevant index sets.
For any $i,j\in\{1,\dots,n\}$ and $s\in\N$ we define the set
\begin{equation}
\mathcal{I}_{ij}(s):=
	\big\{
	I=(i_1,\dots,i_s): i=i_1<i_2<\dots<i_s=j
	\big\},
	\end{equation}
where $\mathcal{I}_{ii}(1)=\{(i)\}$.	
For any $I\in \mathcal{I}_{ij}(s)$ we let $I^\complement$ denote the ordered complement of $I$ in $\{i,\dots,j\}$. That is, $I^\complement:= (i_1',\dots,i_u')$ such that $i_1'<\dots<i_u'$ and $\{i_1',\dots,i_u'\}=\{i,\dots,j\}\backslash \{i_1,\dots,i_s\}$.
Consider for each $I\in \mathcal{I}_{ij}(s)$ the matrix in $M_{nn}(\N_0)$ given by $e_I=\sum_{u=1}^{s-1} e_{i_{u+1},i_u}$. 
The corresponding operator in $U(\mathfrak{t}^-)$, as defined by \eqref{sec2_eq_E-gamma-matrix}, is then
\begin{equation}
E^{e_I}= E_{i_2i_1}\cdots E_{i_si_{s-1}}.
	\end{equation}

\begin{theo}
\label{sec4_theo_mickelsson-zhelobenko-generator}
The elements $\mathbf{p}B_j^\pm\in Z(\mathfrak{g},\mathfrak{t})$, for $1\leq j\leq n$, satisfy 
the following identities modulo $\mathfrak{J}'$.
\begin{equation}
\label{sec4_eq_mickelsson-zhelobenko-generator-1}
\begin{split}
\mathbf{p}B_j^+
	&= 
	\sum_{i=1}^{j}\sum_{s=1}^{j-i+1}\sum_{I\in \mathcal{I}_{ij}(s)} (-1)^{s-1}E^{e_I}B_i^+
	\frac{\prod_{\ell\in I^\complement} (h_\ell-h_j-1)}{\prod_{\ell=i}^{j-1} (h_\ell-h_j)}
,
	\end{split}
	\end{equation}
\begin{equation}
\label{sec4_eq_mickelsson-zhelobenko-generator-2}
\begin{split}
\mathbf{p}B_j^-
	&= 
	\sum_{i=j}^{n}\sum_{s=1}^{i-j+1}\sum_{I\in \mathcal{I}_{ji}(s)} E^{e_I}B_i^-
	\frac{1}{\prod_{\ell\in I, \ell\neq j} (h_j-h_\ell)}
,
	\end{split}
	\end{equation}
Similarly there exists $H_{IJ}^\pm\in R(\mathfrak{h})$ such that the elements $\mathbf{p}\{B_i^\pm,B_j^\pm\}\in Z(\mathfrak{g},\mathfrak{t})$, for $1\leq i\leq j\leq n$, satisfy the following identities modulo $\mathfrak{J}'$.
\begin{equation}
\label{sec4_eq_mickelsson-zhelobenko-generator-3}
\begin{split}
\mathbf{p}\{B_i^+,B_j^+\}
	&=
	\sum_{k=1}^{i}\sum_{s=1}^{i-k+1}\sum_{I\in \mathcal{I}_{ki}(s)}
	\sum_{l=1}^{j}\sum_{t=1}^{j-l+1}\sum_{J\in \mathcal{I}_{lj}(t)} 
	E^{e_I+e_J}\{B_k^+,B_l^+\}H_{IJ}^+
	\end{split}
	\end{equation}
and
\begin{equation}
\label{sec4_eq_mickelsson-zhelobenko-generator-4}
\begin{split}
\mathbf{p}\{B_i^-,B_j^-\}
	&=
	\sum_{k=i}^{n}\sum_{s=1}^{k-i+1}\sum_{I\in \mathcal{I}_{ik}(s)}
	\sum_{l=j}^{n}\sum_{t=1}^{l-j+1}\sum_{J\in \mathcal{I}_{jl}(t)} 
	E^{e_I+e_J}\{B_k^-,B_l^-\}H_{IJ}^-.
	\end{split}
	\end{equation}
	\end{theo}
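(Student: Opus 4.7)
My plan is to verify each of the four identities by invoking Theorem~\ref{sec4_theo_extremal-projector-isomorphism}: the element $\mathbf{p}v$ (for any $v\in D$) is the unique element of $Z(\mathfrak{g},\mathfrak{t})$ whose image under the projection $\delta$ equals $v$. To establish (\ref{sec4_eq_mickelsson-zhelobenko-generator-1}), it therefore suffices to check that the proposed right-hand side satisfies (i) it is annihilated modulo $\mathfrak{J}'$ by every positive root vector $E_{kl}$ of $\mathfrak{t}$ (i.e.\ $k<l$), and (ii) its image under $\delta$ equals $B_j^+$. The same reduction applies to (\ref{sec4_eq_mickelsson-zhelobenko-generator-2})--(\ref{sec4_eq_mickelsson-zhelobenko-generator-4}).

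Property (ii) is a direct inspection. In the proposed expansion for $\mathbf{p}B_j^+$ the unique summand belonging to $D$ (rather than to $\mathfrak{t}^-M'(\mathfrak{g},\mathfrak{t})$) is the one with $s=1$ and $i=j$: then $I=(j)$, the monomial $E^{e_I}$ reduces to the empty product $1$, and both the numerator $\prod_{\ell\in I^\complement}(h_\ell - h_j - 1)$ and the denominator $\prod_{\ell=j}^{j-1}(h_\ell-h_j)$ are empty, so this summand contributes exactly $B_j^+$. The analogous check works for the three remaining identities, where in each case the leading summand corresponds to the shortest admissible path.

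Property (i) is the substantive content. To prove it I would fix a single positive root vector $E_{kl}$ with $k<l$ and apply it termwise to the right-hand side of (\ref{sec4_eq_mickelsson-zhelobenko-generator-1}), using $[E_{kl},E_{ab}]=\delta_{la}E_{kb}-\delta_{kb}E_{al}$ together with $[E_{kl},B_m^+]=\delta_{lm}B_k^+$ (which follows directly from (\ref{sec2_eq_relations-osp})). Commuting $E_{kl}$ past $E^{e_I}B_i^+$ produces a bounded number of contributions, each of the form $E^{e_J}B_{i'}^+$ for a neighboring path $J$ (plus tails that end in $\mathfrak{t}^+$ and hence lie in $\mathfrak{J}'$). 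Collecting contributions and demanding that the total coefficient of each fixed $E^{e_J}B_{i'}^+$ vanish yields a family of rational-function identities in $R(\mathfrak{h})$ satisfied by the proposed coefficients $\frac{\prod_{\ell\in I^\complement}(h_\ell-h_j-1)}{\prod_{\ell=i}^{j-1}(h_\ell-h_j)}$. Organized case by case according to how the indices of $E_{kl}$ overlap those of the path $I$, these identities reduce to standard partial-fraction manipulations at the simple poles $h_\ell=h_j$; carrying out this matching in full generality is the main technical obstacle of the proof.

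The identity (\ref{sec4_eq_mickelsson-zhelobenko-generator-2}) is treated identically, using $[E_{kl},B_m^-]=-\delta_{km}B_l^-$; the opposite sign flips the orientation of the paths in $\mathcal{I}_{ji}(s)$ and collapses the numerator to $1$. For the anticommutator identities (\ref{sec4_eq_mickelsson-zhelobenko-generator-3})--(\ref{sec4_eq_mickelsson-zhelobenko-generator-4}) the same highest-weight-vector argument applies, but $E_{kl}$ may now act on either of the two $B$-legs of $\{B_k^\pm,B_l^\pm\}$, which doubles the combinatorial bookkeeping and forces the expansion to involve two independent paths $I\in\mathcal{I}_{ki}(s)$ and $J\in\mathcal{I}_{lj}(t)$. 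The coefficients $H_{IJ}^\pm\in R(\mathfrak{h})$ are then uniquely determined by the analogous highest-weight recursion together with condition (ii), but a clean closed form is combinatorially unwieldy; this is precisely why the theorem asserts only their existence.
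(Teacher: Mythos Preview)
Your proposal is correct and follows essentially the same strategy as the paper. The only cosmetic difference is one of framing: the paper first argues from weight considerations that $\mathbf{p}B_j^+$ must a priori be a finite $R(\mathfrak{h})$-linear combination of the monomials $E^{e_I}B_i^+$ with $I\in\mathcal{I}_{ij}(s)$ (using that $\mathbf{p}\in F_0(\mathfrak{t})$ and $[E_{kl},B_j^+]=\delta_{jl}B_k^+$ to reduce modulo $\mathfrak{J}'$), and then determines the unknown coefficients $H_I^+$ by imposing $E_{kl}\,\mathbf{p}B_j^+=0$ for all $k<l$; you instead take the explicit formula as given and verify the same two conditions directly. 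Both routes land on the identical linear system in $R(\mathfrak{h})$ and leave its detailed resolution as the technical core, exactly as you note.
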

\begin{proof}
The extremal projector $\mathbf{p}$ is an element of the weight zero subspace $F_0(\mathfrak{t})$ of $F(\mathfrak{t})$. This means that $\mathbf{p}B_j^+$ has weight $\epsilon_j$, is an element of $F_{\epsilon_j}(\mathfrak{t})$ and can be written as an infinite linear combination over $R(\mathfrak{h})$ 
\begin{equation}
\label{sec4_eq_m.z.-generator-proof0}
\mathbf{p}B_j^+ = 
	\sum_{s_1,\dots,s_m\in\N_0}
E_{-\alpha_1}^{s_1}\cdots E_{-\alpha_m}^{s_m}E_{\alpha_m}^{-s_m}\cdots E_{\alpha_1}^{-s_1}B_j^+ H_{s_1,\dots,s_m},
	\end{equation} 
where $H_{s_1,\dots,s_m}\in R(\mathfrak{h})$. The extremal projector $\mathbf{p}$ may be considered as a map from $M'(\mathfrak{g},\mathfrak{t})$ to $Z(\mathfrak{g},\mathfrak{t})$, so by identifying $B_j^+$ with its isomorphism class in $M'(\mathfrak{g},\mathfrak{t})$ we may consider $\mathbf{p}B_j^+$ as an element of $Z(\mathfrak{g},\mathfrak{t})$ which satisfies the identity \eqref{sec4_eq_m.z.-generator-proof0} modulo $\mathfrak{J}'$.
Since $\{\alpha_1,\dots,\alpha_m\}=\Delta^+=\{\epsilon_k-\epsilon_l:1\leq k<l\leq n\}$ and $E_{kl}=E_{\epsilon_k-\epsilon_l}$ we know that, for any $i\in\{1,\dots,m\}$, there exists $k,l\in\{1,\dots,n\}$ with $k<l$ such that $E_{\alpha_i}=E_{kl}$.
The relations \eqref{sec2_eq_relations-osp} tells us that 
\begin{equation}
[E_{kl},B_j^+]= \delta_{jl}B_k^+,
	\end{equation}
for any $k,l\in\{1,\dots,n\}$ with $k<l$.
These observations imply that the sum \eqref{sec4_eq_m.z.-generator-proof0} can be rewritten as follows modulo $\mathfrak{J}'$
\begin{equation}
\label{sec4_eq_m.z.-generator-proof1}
\mathbf{p}B_j^+ = 
	\sum_{i=1}^j
	\sum_{\substack{s_1,\dots,s_m\in\N_0,\\ -s_1\alpha_1-\cdots-s_m\alpha_m+\epsilon_i=\epsilon_j}}
E_{-\alpha_1}^{s_1}\cdots E_{-\alpha_m}^{s_m}B_i^+ \hat{H}_{s_1,\dots,s_m}(i),
	\end{equation} 
where $\hat{H}_{s_1,\dots,s_m}(i)\in R(\mathfrak{h})$. 
Observe now that the identity 
\begin{equation}
\label{sec4_eq_m.z.-generator-proof2}
-s_1\alpha_1-\cdots-s_m\alpha_m+\epsilon_i=\epsilon_j
	\end{equation}
is satisfied if and only if there exists $I\in\mathcal{I}_{ij}(s)$ with $s\in\{1,\dots,j-i+1\}$ such that 
\begin{equation}
E_{-\alpha_1}^{s_1}\cdots E_{-\alpha_m}^{s_m}B_i^+
	=E_{i_2i_1}\cdots E_{i_si_{s-1}}B_i^+=E^{e_I}B_i^+.
	\end{equation}
This means specifically that there exist elements $H_I^+\in R(\mathfrak{h})$ such that \eqref{sec4_eq_m.z.-generator-proof1} can be rewritten as follows modulo $\mathfrak{J}'$
\begin{equation}
\label{sec4_eq_m.z.-generator-proof3}
\mathbf{p}B_j^+
	= 
	\sum_{i=1}^{j}\sum_{s=1}^{j-i+1}\sum_{I\in \mathcal{I}_{ij}(s)} E^{e_I}B_i^+H_I^+.
	\end{equation}
To get \eqref{sec4_eq_mickelsson-zhelobenko-generator-1} we need to determine the elements $H_I^+$. 
By applying a positive root vector $E_{kl}$, with $k<l$, to either side of the identity \eqref{sec4_eq_m.z.-generator-proof3} and using \eqref{sec4_eq_extremal_projector_identity} we get 
\begin{equation}
0	= 
	\sum_{i=1}^{j}\sum_{s=1}^{j-i+1}\sum_{I\in \mathcal{I}_{ij}(s)} E_{kl}E^{e_I}B_i^+H_I^+.
	\end{equation}
By comparing the identities obtained for each positive root vector $E_{kl}$ we get a set of linear equations which can be solved for the coefficients $H_I^+$. The identity \eqref{sec4_eq_mickelsson-zhelobenko-generator-1} is then obtained by substituting these solutions into \eqref{sec4_eq_m.z.-generator-proof3}.
The remaining identities \eqref{sec4_eq_mickelsson-zhelobenko-generator-2}, \eqref{sec4_eq_mickelsson-zhelobenko-generator-3} and \eqref{sec4_eq_mickelsson-zhelobenko-generator-4} are proven in a similar manner.
	\end{proof}

From the identities given in Theorem \ref{sec4_theo_mickelsson-zhelobenko-generator}, we can conclude that the elements $\mathbf{p}B_j^\pm$ and $\mathbf{p}\{B_i^\pm,B_j^\pm\}$ generate $Z(\mathfrak{g},\mathfrak{t})$.

\begin{coro}
\label{sec4_coro_mickelsson-zhelobenko-generator}
The Mickelsson-Zhelobenko algebra $Z(\mathfrak{g},\mathfrak{t})$ is generated by the elements $\mathbf{p}B_j^\pm$ and $\mathbf{p}\{B_i^\pm,B_j^\pm\}$, for $1\leq i\leq j\leq n$. 
	\end{coro}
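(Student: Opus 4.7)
The plan is to combine Theorem \ref{sec4_theo_extremal-projector-isomorphism} with an induction on monomial degree in $V$. By Theorem \ref{sec4_theo_extremal-projector-isomorphism}, every element of $Z(\mathfrak{g},\mathfrak{t})$ is of the form $\mathbf{p}v$ for some $v \in D$, and since $V$ has basis $\{B_j^\pm, \{B_i^\pm, B_j^\pm\} : 1 \le i \le j \le n\}$, the space $D$ is spanned over $R(\mathfrak{h})$ by monomials in these basis elements. It therefore suffices to show that for every such monomial $v = v_1 v_2 \cdots v_k$, the element $\mathbf{p}v$ lies in the subalgebra $A \subset Z(\mathfrak{g},\mathfrak{t})$ generated by the elements $\mathbf{p}B_j^\pm$ and $\mathbf{p}\{B_i^\pm,B_j^\pm\}$ together with $R(\mathfrak{h})$.

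I would proceed by induction on $k$, the case $k=1$ being immediate. For the inductive step, consider the product $z := (\mathbf{p}v_1)\bigl(\mathbf{p}(v_2 \cdots v_k)\bigr)$ computed in $Z(\mathfrak{g},\mathfrak{t})$; by the inductive hypothesis applied to $v_2 \cdots v_k$, the factor $\mathbf{p}(v_2\cdots v_k)$, and hence $z$ itself, belong to $A$. Applying the projection $\delta$ (the inverse of $\mathbf{p}|_Z$ by Theorem \ref{sec4_theo_extremal-projector-isomorphism}) and using linearity of $\mathbf{p}$, one obtains
\begin{equation*}
\mathbf{p}(v_1 v_2 \cdots v_k) \;=\; z \;-\; \mathbf{p}\bigl(\delta(z) - v_1 v_2 \cdots v_k\bigr).
\end{equation*}
The argument then reduces to showing that $\delta(z) - v_1 v_2 \cdots v_k$ is a linear combination of monomials in $V$ of total degree strictly less than $k$; a nested induction on degree would then ensure that $\mathbf{p}$ of these correction terms also lies in $A$, closing the step.

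To establish this leading-term claim, I would equip $M'(\mathfrak{g},\mathfrak{t})$ with the PBW-type filtration that assigns degree one to every element of $V$ and degree zero to every element of $\mathfrak{t}$. The explicit formulas in Theorem \ref{sec4_theo_mickelsson-zhelobenko-generator} show that each generator $\mathbf{p}w$, with $w$ a basis element of $V$, decomposes as $w$ plus a sum of terms lying in $\mathfrak{t}^- M'(\mathfrak{g},\mathfrak{t})$ of the same $V$-degree. Since $V$ is a $\mathfrak{t}$-submodule of $\mathfrak{g}$ under the adjoint action, commutators of elements of $V$ with elements of $\mathfrak{t}^-$ stay in $V$ and do not raise the $V$-degree. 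Expanding $z$ as a product of such decompositions, every summand either preserves the leading piece $v_1 v_2 \cdots v_k$, or lies in $\mathfrak{t}^- M'(\mathfrak{g},\mathfrak{t})$ (and is killed by $\delta$), or contributes a correction of strictly lower $V$-degree coming from a commutator.

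The main obstacle is precisely this filtration bookkeeping: one must check that the filtration descends to $Z(\mathfrak{g},\mathfrak{t})$ compatibly with the quotient by $\mathfrak{J}'$, and, more delicately, that no cancellations among the $R(\mathfrak{h})$-valued coefficients appearing in Theorem \ref{sec4_theo_mickelsson-zhelobenko-generator} can conspire to annihilate the leading term $v_1 v_2 \cdots v_k$ in $\delta(z)$. Once this nondegeneracy of the leading term is confirmed, both inductions close simultaneously and the corollary follows.
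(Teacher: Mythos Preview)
Your approach is genuinely different from the paper's, and the obstacle you flag at the end is a real gap, not just bookkeeping. The paper does not argue by induction on the $V$-degree of monomials at all. Instead, it observes that the formulas of Theorem~\ref{sec4_theo_mickelsson-zhelobenko-generator} are \emph{triangular in the index $j$}: the expression for $\mathbf{p}B_j^+$ involves $B_j^+$ together with terms $E^{e_I}B_i^+$ only for $i<j$ (and analogously for the other generators). Solving for $B_j^+$ and recursively substituting the already-obtained expressions for $B_i^+$ with $i<j$ yields the explicit inversion formulas \eqref{sec4_eq_mickelsson-zhelobenko-generator-identity-1}--\eqref{sec4_eq_mickelsson-zhelobenko-generator-identity-2}, which express each basis element of $V$ in $M'(\mathfrak{g},\mathfrak{t})$ as an $R(\mathfrak{h})$-combination of terms $E^{e_I}\,\mathbf{p}B_i^\pm$ (and similarly for the anticommutators). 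One then substitutes these into any monomial $v\in D$ and applies $\mathbf{p}$; since $\mathbf{p}E_{-\alpha}=0$, this rewrites $\mathbf{p}v$ as a combination of monomials in the $\mathbf{p}B_j^\pm$, $\mathbf{p}\{B_i^\pm,B_j^\pm\}$. The recursion is on the root-index, not on monomial length, and it terminates in finitely many steps by triangularity.

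Your degree argument, by contrast, does not close as stated. Write $\mathbf{p}(v_2\cdots v_k)=v_2\cdots v_k+u$ with $u\in\mathfrak{t}^-M'(\mathfrak{g},\mathfrak{t})$; then $\delta(z)$ picks up the contribution $\delta(v_1u)$. Commuting $v_1$ past the $\mathfrak{t}^-$-factors in $u$ produces terms of the form $[v_1,E_{-\alpha}]\cdot(\text{monomial of $V$-degree }k-1)$, and since $[v_1,E_{-\alpha}]\in V$ these are again of $V$-degree $k$, not $<k$. They are different degree-$k$ monomials from $v_1\cdots v_k$, so $\delta(z)-v_1\cdots v_k$ need not drop in $V$-degree, and your induction on degree alone does not terminate. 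To rescue the argument you would need a strictly finer well-ordering on monomials in $V$ (and a proof that the correction terms are smaller in that ordering), which you have not supplied. The paper's index-triangular inversion sidesteps this entirely.
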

\begin{proof}
By Theorem \ref{sec4_theo_extremal-projector-isomorphism} any element of $Z(\mathfrak{g},\mathfrak{t})$ is of the form $\mathbf{p}v$, where $v\in D$. Any element $v\in D$ is a linear combination of the monomials in the elements $B_j^\pm$ and $\{B_i^\pm,B_j^\pm\}$. 
By isolating these terms in the identities from Theorem \ref{sec4_theo_mickelsson-zhelobenko-generator} we get recursive relations that upon repeated use will let us expand the elements $B_j^\pm$ and $\{B_i^\pm,B_j^\pm\}$ as linear combinations of elements $\mathbf{p}B_j^\pm$ and $\mathbf{p}\{B_i^\pm,B_j^\pm\}$.
For the elements $B_j^\pm$ these expansions are
\begin{equation}
\label{sec4_eq_mickelsson-zhelobenko-generator-identity-1}
B_j^+=\mathbf{p}B_j^+
	+\sum_{i=1}^{j-1}\sum_{s=2}^{j-i+1}\sum_{I\in \mathcal{I}_{ij}(s)} E^{e_I}\mathbf{p}B_i^+
	\frac{\prod_{\ell\in I^\complement} (h_i-h_\ell+1)}{\prod_{\ell=i+1}^{j} (h_i-h_\ell)}
	\end{equation}
and 
\begin{equation}
\label{sec4_eq_mickelsson-zhelobenko-generator-identity-2}
B_j^-
	=\mathbf{p}B_j^- 
	 + \sum_{i=j+1}^{n}\sum_{s=2}^{i-j+1}\sum_{I\in \mathcal{I}_{ji}(s)} E^{e_I}\mathbf{p}B_i^-
	\frac{1}{\prod_{\ell\in I,\ell\neq i} (h_i-h_\ell)}.
	\end{equation}
Similar expansions can be obtained for the elements $\{B_i^\pm,B_j^\pm\}$.
Using these relations we can write $\mathbf{p}v$ as a linear combinations of monomials in the $\mathbf{p}B_j^\pm$'s and $\mathbf{p}\{B_i^\pm,B_j^\pm\}$'s.
	\end{proof}

For application to the study of $\mathfrak{g}$-modules it is beneficial to consider certain normalizations of the generators given in Corollary \ref{sec4_coro_mickelsson-zhelobenko-generator}. 
We can consider any element $x\in Z(\mathfrak{g},\mathfrak{t})$ as having coefficients in the fraction field $R(\mathfrak{h})$. This means that we can find an element $\pi\in U(\mathfrak{h})$ that acts as a right denominator for $x$ in such a way that $x\pi$ has coefficients in $U(\mathfrak{h})$ instead of in $R(\mathfrak{h})$. It follows that $x\pi$ can be considered an element of the Mickelsson algebra $S(\mathfrak{g},\mathfrak{t})$. 
Using such normalizations we can obtain generators for $S(\mathfrak{g},\mathfrak{t})$ from the generators of $Z(\mathfrak{g},\mathfrak{t})$ given above. Specifically this is done as follows.
\begin{equation}
\label{sec4_eq_mickelsson-zhelobenko-generator-identity-3}
\begin{split}
z_j^+&:= \mathbf{p}B_j^+(h_1-h_j)\cdots(h_{j-1}-h_j),\quad (1\leq j\leq n),\\
z_j^-&:= \mathbf{p}B_j^-(h_j-h_{j+1})\cdots(h_j-h_n),\quad (1\leq j\leq n),\\
z_{ij}^+&:= \mathbf{p}\{B_i^+,B_j^+\}\pi_{ij}^+
\\
z_{ij}^-&:= \mathbf{p}\{B_i^-,B_j^-\}\pi_{ij}^-
,
	\end{split}
	\end{equation}
where $\pi_{ij}^\pm$ are appropriate elements in $U(\mathfrak{h})$ of which the explicit form is not needed in this paper.

The unital associative algebra generated by the elements $z_j^\pm$ and $z_{ij}^\pm$ is thus either $Z(\mathfrak{g},\mathfrak{t})$ or $S(\mathfrak{g},\mathfrak{t})$ depending on whether it is considered as an algebra over $R(\mathfrak{h})$ or $U(\mathfrak{h})$. 
See \eqref{sec5_mickelsson-algebra-generators-explicit-n=3} for explicit calculation of the $z_j^\pm$'s 
in the case $n=3$.

\subsection{Application to lowest weight modules of $\mathfrak{osp}(1|2n)$}
\label{sec4.2}

The most obvious use for the raising and lowering operators $z_j^\pm$ and $z_{ij}^\pm$ is to study the $\mathfrak{t}$-highest weight vectors in $\mathfrak{g}$-modules. This includes branching problems where one wants to describe the structure of simple $\mathfrak{g}$-modules when they are viewed as $\mathfrak{t}$-modules. 
What follows is a collection of general observations regarding the use of raising and lowering operators in the study of $\mathfrak{t}$-highest weight vectors in $\mathfrak{g}$-modules.

Consider a simple lowest weight module $L$ of $\mathfrak{g}$ whose lowest weight vector is $w_0$. Since $\mathfrak{J}w_0=\{0\}$ it follows that the algebra $M(\mathfrak{g},\mathfrak{t})$ has a natural action on $L$ such that 
\begin{equation}
M(\mathfrak{g},\mathfrak{t})w_0=U(\mathfrak{g})w_0=L.
	\end{equation}
We can additionally describe the space $L^+$ spanned by all $\mathfrak{t}$-highest weight vectors in $L$ as follows 
\begin{equation}
L^+	:=\{ v\in L : \mathfrak{t}^+v=0\} 
	= S(\mathfrak{g},\mathfrak{t})w_0
	\end{equation}
This means that $S(\mathfrak{g},\mathfrak{t})$ has a natural action on $L^+$ and that, for any $x\in S(\mathfrak{g},\mathfrak{t})$, there exists an element $\hat{x}\in U(\mathfrak{g})$ that is unique modulo $\mathfrak{J}$ and for which $xv=\hat{x}v$, for all $v\in L^+$.

Let us denote by $L_\mu^+$ the weight subspace of $L^+$ corresponding to the weight $\mu\in\mathfrak{h}^*$, that is,
\begin{equation}
L_\mu^+:=\big\{ v\in L^+ : E_{ii}v=\mu_iv,\ i\in\{1,\dots,n\}\big\}.
	\end{equation} 
This lets us make the following observation regarding the generators of $S(\mathfrak{g},\mathfrak{t})$.
\begin{lemm}
\label{sec4_lemm_raising-lowering-action-weight-spaces}
For any $1\leq i\leq j\leq n$ and $\mu\in \mathfrak{h}^*$ we have
\begin{equation}
z_j^\pm L_\mu^+\subset L_{\mu\pm\epsilon_j}^+
\quad\text{ and }\quad	
z_{ij}^\pm L_\mu^+\subset L_{\mu\pm\epsilon_i\pm\epsilon_j}^+.
	\end{equation}
	\end{lemm}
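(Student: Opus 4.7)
The plan is to verify the lemma in two independent steps. First, show that the operators preserve the $\mathfrak{t}^+$-annihilation condition, so they map $L^+$ into $L^+$. Second, show that each generator is a weight vector whose weight equals the claimed shift, so it carries $L_\mu^+$ into the correct target weight subspace. Both ingredients are essentially formal consequences of the Mickelsson--Zhelobenko setup of Section~\ref{sec4.1}.

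For the first step, I would exploit the very definition of the Mickelsson algebra. The normalizing factors inserted in \eqref{sec4_eq_mickelsson-zhelobenko-generator-identity-3} are chosen precisely so that $z_j^\pm$ and $z_{ij}^\pm$ descend from $Z(\mathfrak{g},\mathfrak{t})$ to the Mickelsson algebra $S(\mathfrak{g},\mathfrak{t})=(\Norm\mathfrak{J})/\mathfrak{J}$; concretely, the factors $(h_k-h_j)$ and $\pi_{ij}^\pm$ clear the $R(\mathfrak{h})$-denominators coming from the formulas of Theorem~\ref{sec4_theo_mickelsson-zhelobenko-generator}. Fix a lift $\hat z\in\Norm\mathfrak{J}\subset U(\mathfrak{g})$ of any such generator. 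Since $\mathfrak{t}^+\subset\mathfrak{J}$ and $\mathfrak{J}\hat z\subset\mathfrak{J}=U(\mathfrak{g})\mathfrak{t}^+$ by the normalizer condition, one gets $\mathfrak{t}^+\hat z\subset U(\mathfrak{g})\mathfrak{t}^+$. For any $v\in L^+$ one has $\mathfrak{t}^+v=0$, hence $\mathfrak{J}v=0$, whence $\mathfrak{t}^+(\hat z v)\subset\mathfrak{J}v=\{0\}$, placing $\hat z v$ in $L^+$. Independence of the representative is automatic, since any two lifts differ by an element of $\mathfrak{J}$ and hence act identically on $L^+$.

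For the second step I would simply tally weights of factors. The extremal projector $\mathbf{p}$ lies in the weight-zero subspace $F_0(\mathfrak{t})\subset F(\mathfrak{t})$; the odd generators $B_j^\pm$ are root vectors of roots $\pm\epsilon_j$; the anticommutators $\{B_i^\pm,B_j^\pm\}$ are root vectors of roots $\pm\epsilon_i\pm\epsilon_j$; and the Cartan normalizers $(h_1-h_j)\cdots(h_{j-1}-h_j)$, $(h_j-h_{j+1})\cdots(h_j-h_n)$, and $\pi_{ij}^\pm$ all sit in $U(\mathfrak{h})$ and are therefore weight zero. Summing these, the operator $z_j^\pm$ acquires weight $\pm\epsilon_j$ and $z_{ij}^\pm$ acquires weight $\pm\epsilon_i\pm\epsilon_j$; thus if $v\in L_\mu^+$, then $z_j^\pm v$ lies in the weight space $L_{\mu\pm\epsilon_j}$ and $z_{ij}^\pm v$ in $L_{\mu\pm\epsilon_i\pm\epsilon_j}$. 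Combining with the first step yields the stated inclusions into $L_{\mu\pm\epsilon_j}^+$ and $L_{\mu\pm\epsilon_i\pm\epsilon_j}^+$.

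There is no real technical obstacle: once the extremal projector and Mickelsson algebra apparatus is available, both ingredients follow directly from definitions. The single small verification worth flagging is that the denominator-clearing factors in \eqref{sec4_eq_mickelsson-zhelobenko-generator-identity-3} genuinely belong to $U(\mathfrak{h})$ rather than merely to $R(\mathfrak{h})$; this is transparent for the listed $(h_k-h_j)$ factors and is an explicit assumption on the unspecified $\pi_{ij}^\pm$.
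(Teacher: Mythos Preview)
Your proposal is correct. The paper states this lemma without proof, treating it as an immediate consequence of the Mickelsson--Zhelobenko setup; your argument supplies exactly the standard details one would expect---that elements of $S(\mathfrak{g},\mathfrak{t})$ preserve $L^+$ by the normalizer property, and that the weight shift is read off from the weights of $B_j^\pm$, $\{B_i^\pm,B_j^\pm\}$, and the weight-zero factors $\mathbf{p}$ and $\pi_{ij}^\pm\in U(\mathfrak{h})$.
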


To apply these ideas in the context discussed in the rest of this paper we put $L=L(p)$. Proposition \ref{sec3_prop_highest-weight-vector} tells us that $L(p)^+$ has a basis consisting
of the vectors $\Omega_\lambda$, for $\lambda\in\mathbb{P}$ with $\ell(\lambda)\leq p$. In Corollary \ref{sec4_coro_action-raising-lowering-h.w.-vector} we describe explicitly the actions of the raising and lowering operators $z_j^\pm$, for $j\in\{1,\dots,n\}$, on the $\Omega_\lambda$'s and in \eqref{sec4_eq_gl-h.w.-vector-raising-operators} we express the $\Omega_\lambda$'s as monomials in the raising operators $z_j^+$, for $j\in\{1,\dots,n\}$, acting on the $\mathfrak{osp}(1|2n)$-lowest weight vector $v_0$ in $L(p)$.

\subsection{The action of $B_j^\pm$ on $E^\gamma\Omega_\lambda$}
\label{sec4.3}

We will now take a closer look at the problem of finding formulas for the actions of the operators $B_j^\pm$ on the vectors in $L(p)$ of the form $E^{\gamma}\Omega_\lambda$. We are able to express the resulting vectors $B_j^\pm E^{\gamma}\Omega_\lambda$ as linear combinations of vectors of the form $E^{\gamma'}\Omega_\mu$. However, such expansions are not in general expansions into linear combinations of the vectors from the basis $\{E^{\gamma_A}\Omega_A\}$ for $L(p)$. The problem of obtaining such basis expansions is discussed at the end of this section and is solved in Proposition~\ref{sec5_prop_action-h.w.-n=3} for the case $n=3$. 

In the following result we use the expressions for the raising and lowering operators $\mathbf{p}B_j^\pm$ given in Theorem \ref{sec4_theo_mickelsson-zhelobenko-generator} to get expansions of the vectors $B_j^\pm\Omega_\lambda$.

\begin{prop}
\label{sec4_prop_h.w.-action}
Let $\lambda\in \mathbb{P}$ and $j\in\{1,\dots,n\}$. Then
\begin{equation}
\label{sec4_eq_h.w.-action-1}
B_j^+\Omega_\lambda 
	= 
	\sum_{i=1}^{j}\sum_{s=1}^{j-i+1}
	\sum_{I\in\mathcal{I}_{ij}(s)}
	d_{i}^+(\lambda)
	\frac{\prod_{\ell\in I^\complement} (\lambda_i-\lambda_\ell-i+\ell+1)}{\prod_{\ell=i+1}^{j} (\lambda_i-\lambda_\ell-i+\ell)}
	E^{e_I}
	\Omega_{\lambda+\epsilon_i}
	\end{equation}
and
\begin{equation}
\label{sec4_eq_h.w.-action-2}
B_j^-\Omega_\lambda 
	= 
	\sum_{i=j}^{n}\sum_{s=1}^{i-j+1}
	\sum_{I\in\mathcal{I}_{ji}(s)}
	d_{i}^-(\lambda)
	\frac{1}{\prod_{\ell\in I,\ell\neq i} (\lambda_i-\lambda_\ell-i+\ell)}
	E^{e_I}\Omega_{\lambda-\epsilon_i},
	\end{equation}
where $\Omega_{\lambda\pm\epsilon_i}:=0$ if $\lambda\pm\epsilon_i\notin\mathbb{P}$,
\begin{equation}
d_{i}^+(\lambda)
	=
	\frac{(-1)^{\sum_{\alpha=i}^n(\alpha+1)(\lambda_\alpha-\lambda_{\alpha+1}+\delta_{\alpha i})}}{(\lambda_i+1)i}
	\bigg(
	\prod_{\ell=1}^{i-1}\frac{\lambda_i-\lambda_\ell-i+\ell+1}{\lambda_i-\lambda_\ell-i+\ell+[\lambda_i-\lambda_\ell]_2}
	\bigg)
	\end{equation}
and
\begin{equation}
d_{i}^-(\lambda)
	=
	\frac{(\lambda_i+1)i(\lambda_i+n-i+[\lambda_i]_2(p-n))}{(-1)^{\sum_{\alpha=i}^n(\alpha+1)(\lambda_\alpha-\lambda_{\alpha+1})}}
	\bigg(
	\prod_{\ell=i+1}^n\frac{\lambda_i-\lambda_\ell-i+\ell-1}{\lambda_i-\lambda_\ell-i+\ell-[\lambda_i-\lambda_\ell]_2}
	\bigg).
	\end{equation}
	\end{prop}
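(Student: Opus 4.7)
The plan is to derive identities \eqref{sec4_eq_h.w.-action-1} and \eqref{sec4_eq_h.w.-action-2} by applying the inverse expansions of $B_j^\pm$ obtained in the proof of Corollary~\ref{sec4_coro_mickelsson-zhelobenko-generator}, namely \eqref{sec4_eq_mickelsson-zhelobenko-generator-identity-1} and \eqref{sec4_eq_mickelsson-zhelobenko-generator-identity-2}, to the vector $\Omega_\lambda$. These identities hold only modulo the left ideal $\mathfrak{J}' = U'(\mathfrak{g})\mathfrak{t}^+$, but since $\mathfrak{t}^+\Omega_\lambda = 0$ we have $\mathfrak{J}'\Omega_\lambda = 0$, so the congruences become genuine equalities in $L(p)$ after evaluation on $\Omega_\lambda$. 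Moreover, elements of $R(\mathfrak{h})$ act as scalars on the weight vector $\Omega_\lambda$, so no denominator issues arise.

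Applying \eqref{sec4_eq_mickelsson-zhelobenko-generator-identity-1} to $\Omega_\lambda$ immediately exhibits $B_j^+\Omega_\lambda$ as a linear combination of vectors $E^{e_I}(\mathbf{p}B_i^+\Omega_\lambda)$, each summand multiplied by the value at the weight $\lambda + \frac{p}{2}$ of the rational Cartan prefactor $\frac{\prod_{\ell\in I^\complement}(h_i - h_\ell + 1)}{\prod_{\ell = i+1}^{j}(h_i - h_\ell)}$. Using $h_k\Omega_\lambda = (\lambda_k + \frac{p}{2} - k + 1)\Omega_\lambda$, this prefactor collapses to the rational scalar $\frac{\prod_{\ell\in I^\complement}(\lambda_i - \lambda_\ell - i + \ell + 1)}{\prod_{\ell = i+1}^{j}(\lambda_i - \lambda_\ell - i + \ell)}$ appearing in \eqref{sec4_eq_h.w.-action-1}; the denominator is nonzero because $\ell > i$ together with $\lambda_i \geq \lambda_\ell$ forces $\lambda_i - \lambda_\ell - i + \ell > 0$. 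The corresponding step for $B_j^-$ based on \eqref{sec4_eq_mickelsson-zhelobenko-generator-identity-2} is identical in structure.

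Next I would identify $\mathbf{p}B_i^+\Omega_\lambda$ using the extremal projector's defining property \eqref{sec4_eq_extremal_projector_identity}: the vector $\mathbf{p}B_i^+\Omega_\lambda$ is annihilated by $\mathfrak{t}^+$, hence it is either zero or a $\mathfrak{t}$-highest weight vector of weight $\lambda + \epsilon_i + \frac{p}{2}$. Proposition~\ref{sec3_prop_highest-weight-vector} then forces $\mathbf{p}B_i^+\Omega_\lambda = d_i^+(\lambda)\Omega_{\lambda+\epsilon_i}$ for a unique scalar $d_i^+(\lambda)\in\C$, with the convention that $\Omega_{\lambda+\epsilon_i} = 0$ when $\lambda + \epsilon_i \notin \mathbb{P}$ or $\ell(\lambda + \epsilon_i) > p$; the scalar $d_i^-(\lambda)$ is defined analogously. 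Combining this with the previous paragraph already establishes the overall shape of \eqref{sec4_eq_h.w.-action-1}--\eqref{sec4_eq_h.w.-action-2}.

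The main obstacle is the explicit evaluation of $d_i^\pm(\lambda)$. The natural strategy is to substitute the ordered product $\mathbf{p} = \mathbf{p}_{12}\mathbf{p}_{13}\cdots\mathbf{p}_{n-1,n}$ from \eqref{sec4_eq_extremal-projector} together with the series \eqref{sec4_eq_partial-extremal-projector} for each factor, and to apply each $\mathbf{p}_{kl}$ in turn to $B_i^+\Omega_\lambda$. Since $E_{kl}\Omega_\lambda = 0$ for $k < l$ and each commutator $[E_{kl},B_m^\pm]$ is supported on a single index pair, every inner series truncates after finitely many nonzero terms, and the result factors into contributions indexed by the pairs $(k,l)$. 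The parity brackets $[\lambda_i]_2$, $[\lambda_i - \lambda_\ell]_2$, together with the global sign $(-1)^{\sum_\alpha(\alpha+1)(\lambda_\alpha - \lambda_{\alpha+1} + \delta_{\alpha i})}$, then emerge from careful tracking of the sign changes produced by the odd generators $B_m^\pm$ and by the antisymmetrization built into $\Omega_\lambda = (\lambda_1!\cdots\lambda_n!)\omega_\lambda$. As a consistency check, the resulting closed forms can be tested against the explicit $n=3$ computations of Section~\ref{sec5} and against the known action of $B_j^\pm$ on the GZ-basis of $L(p)$ in \cite{Lievens-Stoilova-VanderJeugt-2008}.
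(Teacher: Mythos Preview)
Your derivation of the structural form of \eqref{sec4_eq_h.w.-action-1}--\eqref{sec4_eq_h.w.-action-2} matches the paper exactly: apply the inverse expansions \eqref{sec4_eq_mickelsson-zhelobenko-generator-identity-1}--\eqref{sec4_eq_mickelsson-zhelobenko-generator-identity-2} to $\Omega_\lambda$, use $\mathfrak{t}^+\Omega_\lambda=0$ to turn congruences into equalities, evaluate the Cartan denominators, and invoke Proposition~\ref{sec3_prop_highest-weight-vector} together with Lemma~\ref{sec4_lemm_raising-lowering-action-weight-spaces} to write $\mathbf{p}B_i^\pm\Omega_\lambda=d_i^\pm(\lambda)\Omega_{\lambda\pm\epsilon_i}$ for some scalars $d_i^\pm(\lambda)$.

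Where you diverge from the paper is in the evaluation of $d_i^\pm(\lambda)$, and here your proposal is underspecified. You suggest expanding $\mathbf{p}=\prod\mathbf{p}_{kl}$ and applying each factor to $B_i^+\Omega_\lambda$; while each series does truncate, tracking the interaction of the many factors $\mathbf{p}_{kl}$ with the full antisymmetrized expression for $\Omega_\lambda$ is not obviously tractable, and you give no mechanism for collapsing the result to the stated closed forms. The paper avoids this entirely. In Appendix~\ref{appA} it exploits the GZ-basis of \cite{Lievens-Stoilova-VanderJeugt-2008} not as a consistency check but as the main engine: writing $\Omega_\lambda=\kappa(\lambda)\,|m_\lambda)$ and using the known matrix elements $c_j^\pm(\lambda)$ from that paper gives $d_j^\pm(\lambda)=\frac{\kappa(\lambda)}{\kappa(\lambda\pm\epsilon_j)}c_j^\pm(\lambda)$, and since $c_j^-(\lambda)=c_j^+(\lambda-\epsilon_j)$ one obtains $d_j^+(\lambda)d_j^-(\lambda+\epsilon_j)=c_j^+(\lambda)^2$. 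The paper then reduces to an auxiliary partition $\mu=\sum_{\ell<j}(\lambda_\ell-\lambda_j)\epsilon_\ell$ (so that $\mu_j=0$), computes the single easy value $d_j^-(\mu+\epsilon_j)=(-1)^{j+1}j(p-j+1)$ directly from Lemma~\ref{appA_lemm_technical}, and bootstraps back to $d_j^+(\lambda)$ and $d_j^-(\lambda)$ via the relation above and a commutation of $B_j^+$ past the antisymmetrized blocks in \eqref{appA_eq_omega_lambda-omega_mu}. This indirect route is what produces the parity brackets and global signs cleanly; your direct projector computation would have to rediscover all of this structure from scratch.
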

\begin{proof}
Recall that the weight of $\Omega_\lambda$ is $\lambda+\frac{p}{2}$ and that $\lambda$ is a partition. Because of this one does not run into the problem of dividing by zero when acting on $\Omega_\lambda$ with operators of the form $\frac{1}{h_k-h_\ell}$, for $k\neq \ell$, which appear in the expressions \eqref{sec4_eq_mickelsson-zhelobenko-generator-1} and \eqref{sec4_eq_mickelsson-zhelobenko-generator-2} of $\mathbf{p}B_j^\pm$.
These observations imply  that the action of $\mathbf{p}B_j^\pm$ on $\Omega_\lambda$ is well defined.
%
Together Proposition \ref{sec3_prop_highest-weight-vector} and Lemma \ref{sec4_lemm_raising-lowering-action-weight-spaces} imply that there exist coefficients $d_{i}^\pm(\lambda)\in\C$ such that  
\begin{equation}
\label{sec4_eq_Zhelobenko-raising-lowering-action-Omega-lambda}
(\mathbf{p}B_i^\pm)\Omega_\lambda= 
\begin{cases}
d_{i}^\pm(\lambda)\Omega_{\lambda\pm\epsilon_i}, & \text{ if } \lambda\pm \epsilon_i\in \mathbb{P} \text{ and } \ell(\lambda \pm \epsilon_i)\leq p,\\
0, \text{ otherwise},
	\end{cases}
	\end{equation}
for any $i\in\{1,\dots,n\}$. 
Using the identities \eqref{sec4_eq_mickelsson-zhelobenko-generator-identity-1} and \eqref{sec4_eq_mickelsson-zhelobenko-generator-identity-2} together with \eqref{sec4_eq_Zhelobenko-raising-lowering-action-Omega-lambda} we immediately get the expansions \eqref{sec4_eq_h.w.-action-1} and \eqref{sec4_eq_h.w.-action-2}. The coefficients $d_i^\pm(\lambda)$ are calculated in Appendix \ref{appA}.
	\end{proof}
%
	
As a consequence of observations made in the proof of Proposition \ref{sec4_prop_h.w.-action} we get the following corollary:
\begin{coro}
\label{sec4_coro_action-raising-lowering-h.w.-vector}
Let $j\in\{1,\dots,n\}$ and $\lambda\in\mathbb{P}$ with $\ell(\lambda)\leq p$, then 
\begin{equation}
z_j^+\Omega_\lambda= 
\begin{cases}
d_{j}^+(\lambda)\prod_{\ell=1}^{j-1}(\lambda_\ell-\lambda_j-\ell+j)\Omega_{\lambda+\epsilon_j}, & \text{ if } \lambda+ \epsilon_j\in \mathbb{P} \text{ and } \ell(\lambda + \epsilon_j)\leq p,\\
0, \text{ otherwise},
	\end{cases}
	\end{equation}
and
\begin{equation}
z_j^-\Omega_\lambda= 
\begin{cases}
d_{j}^-(\lambda)\prod_{\ell=j+1}^{n}(\lambda_j-\lambda_\ell-j+\ell)\Omega_{\lambda-\epsilon_j}, & \text{ if } \lambda- \epsilon_j\in \mathbb{P} \text{ and } \ell(\lambda - \epsilon_j)\leq p,\\
0, \text{ otherwise}.
	\end{cases}
	\end{equation}
	\end{coro}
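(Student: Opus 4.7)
The plan is straightforward and reduces to unwinding the definitions together with results already established. By the normalizations in \eqref{sec4_eq_mickelsson-zhelobenko-generator-identity-3} we have
\begin{equation*}
z_j^+ = (\mathbf{p}B_j^+)\,(h_1-h_j)\cdots(h_{j-1}-h_j), \qquad z_j^- = (\mathbf{p}B_j^-)\,(h_j-h_{j+1})\cdots(h_j-h_n),
\end{equation*}
so I would evaluate these on $\Omega_\lambda$ in two steps, reading the products from right to left.

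First I would act by the Cartan product. Since all the $h_i = E_{ii} - i + 1$ lie in $U(\mathfrak{h})$ and commute, they act diagonally on the weight vector $\Omega_\lambda$. By Proposition \ref{sec3_prop_highest-weight-vector} the weight of $\Omega_\lambda$ is $\lambda + \frac{p}{2}$, and the $\frac{p}{2}$-shifts cancel pairwise in each difference $h_\ell - h_j$, so every such factor acts by the scalar $\lambda_\ell - \lambda_j - \ell + j$. This produces the scalars $\prod_{\ell=1}^{j-1}(\lambda_\ell-\lambda_j-\ell+j)$ and $\prod_{\ell=j+1}^{n}(\lambda_j-\lambda_\ell-j+\ell)$ in the raising and lowering cases respectively.

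Second I would invoke identity \eqref{sec4_eq_Zhelobenko-raising-lowering-action-Omega-lambda}, established inside the proof of Proposition \ref{sec4_prop_h.w.-action}, which asserts $(\mathbf{p}B_j^\pm)\Omega_\lambda = d_j^\pm(\lambda)\,\Omega_{\lambda\pm\epsilon_j}$ whenever $\lambda\pm\epsilon_j$ is a partition of length at most $p$, and vanishes otherwise. Multiplying the two scalar contributions with $\Omega_{\lambda\pm\epsilon_j}$ yields precisely the expressions claimed in the corollary.

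There is no real obstacle: the argument is essentially a direct bookkeeping of the Cartan eigenvalues. The only point requiring a moment of care is the boundary case, where one must check consistency when $\lambda\pm\epsilon_j\notin\mathbb{P}$ or its length exceeds $p$; in that situation $\Omega_{\lambda\pm\epsilon_j}=0$ by Proposition \ref{sec3_prop_highest-weight-vector}, so both sides of the identity vanish regardless of whether the Cartan scalar happens to be zero. Note also that if the product of $(h_\ell-h_j)$ contributes a zero (for instance when $\lambda_\ell-\lambda_j-\ell+j=0$), this is absorbed without issue, since the assertion is only that $z_j^\pm\Omega_\lambda$ is this particular scalar multiple of $\Omega_{\lambda\pm\epsilon_j}$.
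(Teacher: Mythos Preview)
Your proposal is correct and follows essentially the same approach as the paper: the corollary is presented there as an immediate consequence of the observations in the proof of Proposition~\ref{sec4_prop_h.w.-action}, namely identity~\eqref{sec4_eq_Zhelobenko-raising-lowering-action-Omega-lambda} combined with the normalization~\eqref{sec4_eq_mickelsson-zhelobenko-generator-identity-3}, which is exactly what you do. One minor remark: in the boundary case where $\lambda\pm\epsilon_j\notin\mathbb{P}$, the vanishing of $\Omega_{\lambda\pm\epsilon_j}$ is a convention set in Proposition~\ref{sec4_prop_h.w.-action} rather than a consequence of Proposition~\ref{sec3_prop_highest-weight-vector}, but since you already rely on~\eqref{sec4_eq_Zhelobenko-raising-lowering-action-Omega-lambda} for the vanishing of $(\mathbf{p}B_j^\pm)\Omega_\lambda$ this does not affect the argument.
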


In order to give formulas for the actions of $B_j^\pm$ on a vector $E^\gamma\Omega_\lambda$ in $L(p)$ we will need the following two technical lemmas in addition to Proposition \ref{sec4_prop_h.w.-action}. We forego giving explicit proofs of these lemmas since they result from simple yet tedious calculations using the relations \eqref{sec2_eq_relations-osp} and \eqref{sec2_eq_relations-gl}.

\begin{lemm}
\label{sec4_lemm_action_step1}
Let $\ell\in\{1,\dots,n\}$, $\gamma\in M_{nn}(\N_0)$ and $\lambda\in \mathbb{P}$ with $\ell(\lambda)\leq p$, then we have
\begin{equation}
B_\ell^+ E^\gamma\Omega_\lambda
	=
	\sum_{j=\ell}^n\sum_{t=1}^{j-\ell+1}\sum_{J\in\mathcal{I}_{\ell j}(t)}
	(-1)^{t-1}\prod_{u=1}^{t-1} \gamma_{j_{u+1}j_u}
	E^{\gamma - e_J}
	B_j^+\Omega_\lambda,
	\end{equation}
and
\begin{equation}
B_\ell^- E^\gamma\Omega_\lambda 
	=
	\sum_{j=1}^\ell  \gamma_{\ell j}^{1-\delta_{\ell j}}
	E^{\gamma - e_{\ell j}}
	B_j^-\Omega_\lambda
	\end{equation}
	\end{lemm}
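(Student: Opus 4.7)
The plan is to push $B_\ell^\pm$ rightward through the PBW-ordered monomial $E^\gamma$ given in \eqref{sec2_eq_E-gamma-matrix} by iterated use of the commutation rules obtained from \eqref{sec2_eq_relations-osp},
\begin{equation*}
[E_{ij},B_\ell^+]=\delta_{j\ell}B_i^+, \qquad [E_{ij},B_\ell^-]=-\delta_{i\ell}B_j^-.
\end{equation*}
These imply $B_\ell^+E_{ij}^m=E_{ij}^mB_\ell^+-m\delta_{j\ell}E_{ij}^{m-1}B_i^+$ and $B_\ell^-E_{ij}^m=E_{ij}^mB_\ell^-+m\delta_{i\ell}E_{ij}^{m-1}B_j^-$. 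A crucial observation is that $E_{ij}$ commutes with $B_i^+$ and with $B_j^-$ whenever $i\neq j$, so once a commutator fires, the newly created odd operator only interacts with a very limited subset of the remaining $E$-factors.

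For the first identity I would push $B_\ell^+$ through $E^\gamma$ row-block by row-block. Within row $k$ the only factor that can produce a nontrivial commutator with the current operator $B_{j_u}^+$ is $E_{k,j_u}$, which appears in $E^\gamma$ precisely when $k>j_u$. Choosing at each opportunity either to fire the commutator or to commute past freely yields a sum indexed by strictly increasing chains $I=(j_1,\dots,j_t)\in\mathcal{I}_{\ell j}(t)$ with $j_1=\ell$: along chain $I$ we fire in each row $j_2,\dots,j_t$, consuming one copy of $E_{j_{u+1}j_u}$ and replacing $B_{j_u}^+$ by $-\gamma_{j_{u+1}j_u}B_{j_{u+1}}^+$ at the $u$-th step. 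The accumulated sign is $(-1)^{t-1}$, the numerical coefficient is $\prod_{u=1}^{t-1}\gamma_{j_{u+1}j_u}$, and the untouched $E$-factors reassemble in their original PBW order to give $E^{\gamma-e_I}$. Acting on $\Omega_\lambda$ and summing over chains gives the first identity; an induction on $|\gamma|$, peeling off factors from the right end of $E^\gamma$, formalizes the bookkeeping.

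For the second identity the argument collapses to a single firing step. When $B_\ell^-$ meets a factor $E_{ij}$, a commutator contributes only if $i=\ell$, and absorbing one copy of $E_{\ell j}$ yields the coefficient $\gamma_{\ell j}$ together with $B_j^-$ for some $j<\ell$. The new operator $B_j^-$ then passes only through factors $E_{i',c}$ whose row index satisfies $i'\geq\ell>j$, and the relation $[E_{i',c},B_j^-]=-\delta_{i'j}B_c^-=0$ shows no further commutators fire, so $B_j^-$ slides freely to the right end of the product. Adding the pass-through contribution $j=\ell$ with coefficient $1=\gamma_{\ell\ell}^{1-\delta_{\ell\ell}}$ recovers the second identity.

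The main obstacle in both cases is verifying that the residual product after the absorptions really is the PBW monomial $E^{\gamma-e_J}$ rather than some rearranged version. This is automatic because each commutator substitution acts locally, decrementing a single power $E_{ij}^m\mapsto E_{ij}^{m-1}$ in place, and because each newly created $B^\pm$ either slides through freely (the $-$ case) or only fires its next commutator in a strictly later row-block (the $+$ case). Consequently no $E$-factor is ever reordered, and the PBW structure of $E^{\gamma-e_J}$ is preserved throughout the calculation.
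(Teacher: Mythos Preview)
Your proposal is correct and follows precisely the route the paper itself indicates: the paper foregoes an explicit proof and states only that the lemma results from ``simple yet tedious calculations using the relations \eqref{sec2_eq_relations-osp} and \eqref{sec2_eq_relations-gl}'', which is exactly the commutator-pushing argument you carry out. Your identification of the key mechanism---that in the $+$ case each firing raises the index so that the next possible interaction occurs only in a strictly later row-block of the PBW product, while in the $-$ case the single firing produces an operator that commutes with all remaining factors---is the correct combinatorial reason the residual monomial is the PBW-ordered $E^{\gamma-e_J}$ with no reordering needed.
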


\begin{lemm}
\label{sec4_lemm_action_step3}
Let $\gamma\in M_{nn}(\N_0)$ and $I\in \mathcal{I}_{ij}(s)$. Then we have
\begin{equation}
E^\gamma E^{e_I}
	=
	\sum_{v_2=i_2}^n\cdots \sum_{v_t=i_s}^n
	\prod_{u=2}^s
	\gamma_{v_ui_u}^{(1-\delta_{v_ui_u})}
	E^{\gamma+\sum_{u=2}^s e_{v_ui_{u-1}}-e_{v_ui_u}}.
	\end{equation}
	\end{lemm}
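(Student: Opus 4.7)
The plan is to prove the identity by induction on $s$. The case $s=1$ is trivial, since $E^{e_I}=1$, the product on the right is empty, and the right-hand side reduces to $E^\gamma$. The substance lies in the base case $s=2$, where $I=(i_1,i_2)$ with $i_1<i_2$ and $E^{e_I}=E_{i_2 i_1}$. I would commute $E_{i_2 i_1}$ leftward through the row blocks of
\begin{equation*}
E^\gamma=\prod_{k=2,\dots,n}^{\rightarrow}\bigl(E_{k1}^{\gamma_{k1}}\cdots E_{k,k-1}^{\gamma_{k,k-1}}\bigr).
\end{equation*}
Using the relations \eqref{sec2_eq_relations-gl}, for $k>i_2$ one has $[E_{kj},E_{i_2 i_1}]=\delta_{j,i_2}E_{k,i_1}-\delta_{k,i_1}E_{i_2,j}$, where the second term vanishes because $k>i_2>i_1$ forces $k\neq i_1$. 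A short induction on the exponent then gives $E_{k,i_2}^m E_{i_2 i_1}=E_{i_2 i_1}E_{k,i_2}^m+m\,E_{k,i_2}^{m-1}E_{k,i_1}$, using that $E_{k,i_1}$ and $E_{k,i_2}$ commute. Sliding $E_{i_2 i_1}$ past the rows $k=n,n-1,\dots,i_2+1$ thus yields at each row either a pure slide-past (which leaves that row unchanged) or a single absorption at some row $v_2>i_2$ carrying coefficient $\gamma_{v_2 i_2}$ and exponent change $+e_{v_2 i_1}-e_{v_2 i_2}$. A full slide-past then inserts $E_{i_2 i_1}$ into row $i_2$, giving exponent change $+e_{i_2 i_1}$. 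Since diagonal entries of $\gamma$ are ignored by $E^\gamma$, both cases merge into $\sum_{v_2=i_2}^{n}\gamma_{v_2 i_2}^{(1-\delta_{v_2 i_2})}E^{\gamma+e_{v_2 i_1}-e_{v_2 i_2}}$, which is the $s=2$ claim.

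For the inductive step with $s\geq 3$, I would split $E^{e_I}=E_{i_2 i_1}\cdot E^{e_{I''}}$ with $I''=(i_2,\dots,i_s)\in\mathcal{I}_{i_2,i_s}(s-1)$. The $s=2$ step rewrites $E^\gamma E_{i_2 i_1}$ as a sum of terms $\gamma_{v_2 i_2}^{(1-\delta_{v_2 i_2})}E^{\gamma'}$ with $\gamma':=\gamma+e_{v_2 i_1}-e_{v_2 i_2}$. Applying the inductive hypothesis to each $E^{\gamma'}E^{e_{I''}}$ produces a multiple sum in $v_3,\dots,v_s$. The essential point is that for every $u\geq 3$ the index $i_u$ is strictly greater than $i_2$, so $i_u\notin\{i_1,i_2\}$; hence $\gamma'$ and $\gamma$ agree in column $i_u$, and the coefficients $(\gamma')_{v_u i_u}^{(1-\delta_{v_u i_u})}$ may be replaced by $\gamma_{v_u i_u}^{(1-\delta_{v_u i_u})}$. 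Reindexing the summations collectively over $v_2,\dots,v_s$ then yields the stated identity.

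The main obstacle is the careful bookkeeping in the $s=2$ step. One must verify that the only nonzero commutator among the factors of row $k$ with $E_{i_2 i_1}$ comes from $E_{k,i_2}$, that the insertion of $E_{i_2 i_1}$ into row $i_2$ respects the product order of \eqref{sec2_eq_E-gamma-matrix} (which is automatic because the operators $E_{i_2,j}$ with distinct second index commute), and that the slide-past and absorption contributions are uniformly captured in a single sum by exploiting the convention that the diagonal entries of $\gamma$ play no role in $E^\gamma$.
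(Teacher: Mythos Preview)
Your argument is correct. The paper does not give an explicit proof of this lemma, stating only that it ``results from simple yet tedious calculations using the relations \eqref{sec2_eq_relations-osp} and \eqref{sec2_eq_relations-gl}''; your induction on $s$ via commuting $E_{i_2i_1}$ leftward through the row blocks of $E^\gamma$ is exactly such a calculation, and all the bookkeeping you flag (the single nontrivial commutator per row, the absorption of $E_{i_2i_1}$ into row $i_2$, and the irrelevance of diagonal entries of $\gamma$) checks out. The one minor point worth making explicit is that when $v_2=i_2$ the modified matrix $\gamma'=\gamma+e_{i_2i_1}-e_{i_2i_2}$ may have a negative diagonal entry and thus fall outside $M_{nn}(\N_0)$, but since $E^{\gamma'}$ depends only on the strictly lower-triangular part this causes no difficulty in invoking the inductive hypothesis.
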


Given $\ell\in\{1,\dots,n\}$, $\gamma\in M_{nn}(\N_0)$ and $\lambda\in \mathbb{P}$ with $\ell(\lambda)\leq p$ we can use Lemma \ref{sec4_lemm_action_step1}, Proposition \ref{sec4_prop_h.w.-action} and  Lemma \ref{sec4_lemm_action_step3}, in that order, to obtain the following expansions.
\begin{equation}
\label{sec4_eq_action-b-plus}
\begin{split}
B_\ell^+ E^\gamma\Omega_\lambda
	=
\sum_{J\in\mathcal{I}_{\ell j}(t)}
&
\sum_{I\in\mathcal{I}_{ij}(s)}
\sum_{v_2=i_2}^n\cdots \sum_{v_t=i_s}^n
\left(
(-1)^{t-1}d_j^+(\lambda)\prod_{u=1}^{t-1} \gamma_{j_{u+1}j_u}\prod_{u=2}^s(\gamma-e_J)_{v_ui_u}^{1-\delta_{v_ui_u}}
\right)
	\times\\&\times	
	E^{\gamma-e_J+\sum_{u=2}^s e_{v_ui_{u-1}}-e_{v_ui_u}}\Omega_{\lambda+\epsilon_i},
	\end{split}
	\end{equation}
where we are implicitly also summing over $j\in\{\ell,\dots,n\}$, $t\in\{1,\dots,j-\ell+1\}$, $i\in\{1,\dots,j\}$ and $s\in\{1,\dots,j-i+1\}$, and
\begin{equation}
\label{sec4_eq_action-b-minus}
\begin{split}
B_\ell^- E^\gamma\Omega_\lambda
	=
\sum_{j=1}^\ell
&
\sum_{I\in\mathcal{I}_{ji}(s)}
\sum_{v_2=i_2}^n\cdots \sum_{v_t=i_s}^n
\left(
\gamma_{\ell j}^{1-\delta_{\ell j}}d_j^-(\lambda)\prod_{u=2}^s(\gamma-e_{\ell j})_{v_ui_u}^{1-\delta_{v_ui_u}}
\right)
	\times\\&\times	
	E^{\gamma-e_{\ell j}+\sum_{u=2}^s e_{v_ui_{u-1}}-e_{v_ui_u}}\Omega_{\lambda-\epsilon_i},
	\end{split}
	\end{equation}
where we are implicitly also summing over $i\in\{j,\dots,n\}$ and $s\in\{1,\dots,i-j+1\}$.
To discuss the implications of these expansions we need to take a closer look at the vectors 
\begin{equation}
E^{\gamma-e_J+\sum_{u=2}^s e_{v_ui_{u-1}}-e_{v_ui_u}}\Omega_{\lambda+\epsilon_i}
\quad\text{ and }\quad
E^{\gamma-e_{\ell j}+\sum_{u=2}^s e_{v_ui_{u-1}}-e_{v_ui_u}}\Omega_{\lambda-\epsilon_i}.
	\end{equation}
To determine whether or not these vectors are basis vectors, we need to analyse the exponent matrices using Lemma \ref{sec3_lemm_betweenness-like-condition}. To this end we introduce the following shorthand notation
\begin{equation}
\gamma^+:=\gamma-e_J+\sum_{u=2}^s e_{v_ui_{u-1}}-e_{v_ui_u}
	\end{equation}
and
\begin{equation}
\gamma^-:=\gamma-e_{\ell j}+\sum_{u=2}^s e_{v_ui_{u-1}}-e_{v_ui_u}.
	\end{equation}
We can modify these matrices to get matrices $\bar{\gamma}^\pm$ that are lower triangular and have column sums $\lambda\pm \epsilon_i$ as follows
\begin{equation}
\bar{\gamma}^\pm_{kl}:=
\begin{cases}
\gamma^\pm_{kl},& \text{ if } k>l,\\
\lambda_l\pm\delta_{il}-\sum_{m=1}^{l-1}\gamma^\pm_{ml}, & \text{ if } k=l,\\
0 & \text{ if } k<l,
	\end{cases}
	\end{equation}
for all $k,l\in\{1,\dots,n\}$. If $\bar{\gamma}^\pm$ has a negative diagonal entry, then $E^{\gamma^\pm}\Omega_{\lambda\pm\epsilon_i}=0$, if not, then $\bar{\gamma}^\pm$ is a lower triangular matrix in $M_{nn}(\N_0)$ with column sum $\lambda\pm\epsilon_i$. Using \eqref{sec2_eq_E-gamma-matrix} and Theorem \ref{sec3_theo_PBW-type-basis-vector-variable-case} we get
\begin{equation}
E^{\gamma^\pm}\Omega_{\lambda\pm\epsilon_i}
	=E^{\bar{\gamma}^\pm}\Omega_{\lambda\pm\epsilon_i}
	=\frac{(\lambda\pm\epsilon_i)!}{\diag(\bar{\gamma}^\pm)!}\Omega_{D(\bar{\gamma}^\pm)},
	\end{equation}
where $D(\bar{\gamma}^\pm)\in\mathbb{E}(\lambda\pm\epsilon_i)$.
To determine whether or not $E^{\bar{\gamma}^\pm}\Omega_{\lambda\pm\epsilon_i}$ is an element of the basis $\{E^{\gamma_A}\Omega_{\lambda_A}\}$ for $L(p)$, we need to check if $D(\bar{\gamma}^\pm)$ is a s.s. Young tableau in $\mathbb{Y}(\lambda\pm\epsilon_i)$ and $\ell(\lambda\pm\epsilon_i)\leq p$. The necessary and sufficient conditions for this are given in Lemma \ref{sec3_lemm_betweenness-like-condition}.

It is unfortunately not generally the case that $D(\bar{\gamma}^\pm)$ satisfies the conditions to be a s.s. Young tableau in $\mathbb{Y}(\lambda\pm\epsilon_i)$, even if $\gamma=\gamma_A$, for some $A\in \mathbb{Y}(\lambda)$.
If this was the case then the identities \eqref{sec4_eq_action-b-plus} and \eqref{sec4_eq_action-b-minus} would completely describe matrix elements of the action of $\mathfrak{osp}(1|2n)$ on the basis $\{E^{\gamma_A}\Omega_{\lambda_A}\}$ for $L(p)$.
Since this is not the case a different strategy is needed in order to obtain these matrix elements from \eqref{sec4_eq_action-b-plus} and \eqref{sec4_eq_action-b-minus}.
One possible strategy would be to work out how the vectors $E^{\bar{\gamma}^\pm}\Omega_{\lambda\pm\epsilon_i}$ expand into linear combinations of the vectors $E^{\gamma_B}\Omega_{\lambda\pm\epsilon_i}$, for $B\in\mathbb{Y}(\lambda\pm\epsilon_i)$, which form the PBW-type basis for the $\mathfrak{gl}(n)$-module $V(\lambda\pm\epsilon_i+\frac{p}{2})$ defined in Theorem \ref{sec2_theo_PBW-type-basis-u(n)}. Knowing these expansions is essentially equivalent to knowing the matrix elements of the action of $\mathfrak{gl}(n)$ on the PBW-type basis for $V(\lambda\pm\epsilon_i+\frac{p}{2})$. Such knowledge is unfortunately missing from the literature. In fact, the only basis for $V(\lambda\pm\epsilon_i+\frac{p}{2})$ for which matrix elements are known is a GZ-basis, see \cite{Molev-2002}.

In conclusion, the identities \eqref{sec4_eq_action-b-plus} and \eqref{sec4_eq_action-b-minus} do not give us the matrix elements of the action of $\mathfrak{osp}(1|2n)$ on the basis $\{E^{\gamma_A}\Omega_{\lambda_A}\}$ for $L(p)$. They do however connect this problem with the problem of finding such matrix elements for the action of $\mathfrak{gl}(n)$ on the PBW-type basis for $V(\lambda\pm\epsilon_i+\frac{p}{2})$.


\subsection{Raising operators and the Gel'fand-Zetlin-basis for $L(p)$}
\label{sec4.4}

The first basis that was obtained for the $\mathfrak{osp}(1|2n)$-module $L(p)$ was a GZ-basis related to the branching 
\begin{equation}
\label{sec4_eq_branching-chain}
\mathfrak{osp}(1|2n)\supset\mathfrak{gl}(n)\supset\cdots\supset \mathfrak{gl}(1).
	\end{equation}
This is the only basis for $L(p)$ for which the matrix elements of the action of $\mathfrak{osp}(1|2n)$ on the basis are known explicitly, see \cite{Lievens-Stoilova-VanderJeugt-2008}. The main disadvantage of this basis is that no expression is known for the basis elements in terms of operators from $U(\mathfrak{g})$ acting on the lowest weight vector of $L(p)$.
In this section we will use raising operators to obtain such expressions.
In Section \ref{sec5} we will express the vectors of the GZ-basis as polynomials in the $B_j^+$'s acting on $v_0$ for the case $n=3$. This is done using Proposition~\ref{sec5_prop_transition-matrix-n=3} which explicitly expands the vectors of the GZ-basis as linear combinations of the vector from the basis $\{E^{\gamma_A}\Omega_{\lambda_A}\}$.

A GZ-basis for $L(p)$ is a union of GZ-bases for the highest weight $\mathfrak{gl}(n)$-modules $V(\lambda+\frac{p}{2})$, for $\lambda\in\mathbb{P}$ with $\ell(\lambda)\leq p$, each respecting the branching $\mathfrak{gl}(n)\supset\cdots\supset \mathfrak{gl}(1)$. 
In \cite{Molev-2002} such GZ-bases for the $\mathfrak{gl}(n)$-modules $V(\lambda+\frac{p}{2})$ are constructed with the basis vectors being expressed using lowering operators from the Mickelsson-Zhelobenko algebras $Z(\mathfrak{gl}(m),\mathfrak{gl}(m-1))$, for $m\in\{2,\dots,n\}$. We briefly sketch this construction.

The extremal projector of $\mathfrak{gl}(m)$ is given by the formulas \eqref{sec4_eq_partial-extremal-projector} and \eqref{sec4_eq_extremal-projector} where the index $n$ is exchanged for $m-1$. The Mickelsson-Zhelobenko algebra $Z(\mathfrak{gl}(m),\mathfrak{gl}(m-1))$ is then generated by the raising operators 
\begin{equation}
\label{sec4_eq_gl-raising}
\mathbf{p}E_{jm}
	=
	E_{jm}
	+
	\sum_{i=1}^{j-1}\sum_{s=2}^{j-i+1}\sum_{I\in\mathcal{I}_{ij}(s)}
	E_{i_si_{s-1}}\cdots E_{i_2i_1}E_{i_1 m}
	\frac{1}{\prod_{\ell\in I,\ell\neq j}(h_j-h_\ell)}
	\end{equation}
and lowering operators 
\begin{equation}
\label{sec4_eq_gl-lowering}
\mathbf{p}E_{mj}
	=
	E_{mj}
	+
	\sum_{i=j+1}^{m-1}\sum_{s=2}^{i-j+1}\sum_{I\in\mathcal{I}_{ji}(s)}
	E_{i_2i_1}\cdots E_{i_si_{s-1}}E_{m i_s}
	\frac{1}{\prod_{\ell\in I,\ell\neq j}(h_j-h_\ell)},
	\end{equation}
for $j\in\{1,\dots,m-1\}$.
Similarly to the identities given in Theorem \ref{sec4_theo_mickelsson-zhelobenko-generator} these identities hold only modulo the ideal in $Z(\mathfrak{gl}(m),\mathfrak{gl}(m-1))$ corresponding to the ideal $\mathfrak{J}'$ in $Z(\mathfrak{osp}(1|2n),\mathfrak{gl}(n))$. For more details, see \cite{Molev-2002}.

By multiplications with the right denominators we can obtain raising and lowering operators in the Mickelsson algebra $S(\mathfrak{gl}(m),\mathfrak{gl}(m-1))$:
\begin{equation}
\label{sec4_eq_gl-raising-lowering}
\begin{split}
y_{jm}&=\mathbf{p}E_{jm}(h_j-h_1)\cdots(h_j-h_{j-1})\\
y_{mj}&=\mathbf{p}E_{mj}(h_j-h_{j+1})\cdots(h_j-h_{m-1}),
	\end{split}
	\end{equation}
for $j\in\{1,\dots,m-1\}$.
The elements of the GZ-basis for $V(\lambda+\frac{p}{2})$ constructed in \cite{Molev-2002} can be indexed by s.s.\ Young tableaux $A\in\mathbb{Y}(\lambda)$ with $\ell(\lambda_A)\leq p$. The basis element corresponding to $A$ is then 
\begin{equation}
\label{sec4_eq_GZ-basis-gl}
	\prod_{k=2,\dots,n}^\rightarrow 
	\left(y_{k1}^{(\gamma_A)_{k1}}\cdots y_{k,k-1}^{(\gamma_A)_{k,k-1}}\right)
	\xi_\lambda,
	\end{equation}
where $\xi_\lambda$ is the highest weight vector of $V(\lambda+\frac{p}{2})$. The arrow refers to the ordering of the product ($k=2$ is leftmost and $k=n$ is rightmost).
We can now construct a GZ-basis for $L(p)$.

\begin{theo}
\label{sec4_theo_GZ-basis}
Let $v_0$ denote the lowest weight vector of $L(p)$. Then $L(p)$ has a GZ-basis consisting of the vectors 
\begin{equation}
v_A:= \prod_{k=2,\dots,n}^\rightarrow 
	\left(y_{k1}^{(\gamma_A)_{k1}}\cdots y_{k,k-1}^{(\gamma_A)_{k,k-1}}\right)
	(z_n^+)^{\lambda_n}\cdots (z_1^+)^{\lambda_1}v_0,
	\end{equation}
for all $A\in\mathbb{Y}(\lambda)$ and $\lambda\in\mathbb{P}$ with $\ell(\lambda)\leq p$. Furthermore, the vector $(z_n^+)^{\lambda_n}\cdots (z_1^+)^{\lambda_1}v_0$ is a $\mathfrak{gl}(n)$-highest weight vector of weight $\lambda+\frac{p}{2}$.
	\end{theo}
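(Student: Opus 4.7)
The plan is to first prove the ``furthermore'' assertion, and then deduce the basis statement from it together with the GZ-basis construction for the summands $V(\lambda+\frac{p}{2})$ recalled earlier in the section.

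For the ``furthermore'' assertion, I would proceed by iterated application of Corollary~\ref{sec4_coro_action-raising-lowering-h.w.-vector} to $v_0=\Omega_0$ (equality holding by the definitions in Section~\ref{sec3}). Applying $(z_1^+)^{\lambda_1}$ yields a scalar multiple of $\Omega_{(\lambda_1,0,\ldots,0)}$; then $(z_2^+)^{\lambda_2}$ yields a scalar multiple of $\Omega_{(\lambda_1,\lambda_2,0,\ldots,0)}$, and so on, producing at the end a scalar multiple of $\Omega_\lambda$. Since $\Omega_\lambda$ is a non-zero $\mathfrak{gl}(n)$-highest weight vector of weight $\lambda+\frac{p}{2}$ whenever $\ell(\lambda)\leq p$ by Proposition~\ref{sec3_prop_highest-weight-vector}, the desired conclusion will follow.

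The one ingredient requiring care is the non-vanishing of the overall scalar. At each step we apply $z_j^+$ to $\Omega_\mu$ with $\mu=(\lambda_1,\ldots,\lambda_{j-1},k,0,\ldots,0)$ and $0\leq k<\lambda_j$; since $k+1\leq\lambda_j\leq\lambda_{j-1}$, the weight $\mu+\epsilon_j$ remains a partition of length at most $p$, so Corollary~\ref{sec4_coro_action-raising-lowering-h.w.-vector} returns the non-trivial branch with scalar $d_j^+(\mu)\prod_{\ell=1}^{j-1}(\lambda_\ell-k-\ell+j)$. The product factor is strictly positive by the partition inequalities $\lambda_\ell\geq\lambda_j>k$ and $j>\ell$. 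Non-vanishing of $d_j^+(\mu)$ is verified from its explicit formula in Proposition~\ref{sec4_prop_h.w.-action}: using $\mu_j-\mu_\ell=k-\lambda_\ell\leq -1$ and $\ell<j$, a short parity case analysis of $[\mu_j-\mu_\ell]_2$ shows that neither the numerator $\mu_j-\mu_\ell-j+\ell+1$ nor the denominator $\mu_j-\mu_\ell-j+\ell+[\mu_j-\mu_\ell]_2$ of any factor can vanish. This is the only delicate point in the proof.

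For the basis statement, Theorem~\ref{sec2_theo_decomposition-into-gl(n)-modules} decomposes $L(p)$ as the multiplicity-free sum $\bigoplus V(\lambda+\frac{p}{2})$ over partitions $\lambda$ of length at most $p$. Each summand $V(\lambda+\frac{p}{2})$ is generated by its (unique-up-to-scalar) $\mathfrak{gl}(n)$-highest weight vector, which by the previous paragraph we may take to be $\xi_\lambda:=(z_n^+)^{\lambda_n}\cdots(z_1^+)^{\lambda_1}v_0$. The Molev construction recalled in \eqref{sec4_eq_GZ-basis-gl} then produces a GZ-basis for $V(\lambda+\frac{p}{2})$ indexed by $A\in\mathbb{Y}(\lambda)$, whose elements are precisely the vectors $v_A$. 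Taking the union over all admissible $\lambda$ yields the desired GZ-basis for $L(p)$; everything beyond the $d_j^+$ non-vanishing is assembly of results already in hand.
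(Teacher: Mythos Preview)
Your proposal is correct and follows essentially the same route as the paper: the paper does not give a formal proof of this theorem but immediately records the identity \eqref{sec4_eq_gl-h.w.-vector-raising-operators}, obtained exactly as you describe by iterating Corollary~\ref{sec4_coro_action-raising-lowering-h.w.-vector} from $v_0=\Omega_0$, and then relies on the Molev construction \eqref{sec4_eq_GZ-basis-gl} together with the decomposition of Theorem~\ref{sec2_theo_decomposition-into-gl(n)-modules}. Your explicit check that the accumulated scalar is non-zero (via the parity analysis of $d_j^+(\mu)$) makes precise a point the paper leaves implicit in the closed formula \eqref{sec4_eq_gl-h.w.-vector-raising-operators}.
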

Using Corollary \ref{sec4_coro_action-raising-lowering-h.w.-vector} we can state the relationship between $(z_n^+)^{\lambda_n}\cdots (z_1^+)^{\lambda_1}v_0$ and $\Omega_\lambda$ as follows.
\begin{equation}
\label{sec4_eq_gl-h.w.-vector-raising-operators}
\begin{split}
(z_n^+)^{\lambda_n}\cdots(z_1^+)^{\lambda_1}v_0
	&=
	\left(\prod_{j=1}^n\frac{(-1)^{\frac{\lambda_j(\lambda_j+1)}{2}}}{\lambda_j!j^{\lambda_j}}\prod_{k=0}^{\lambda_j-1}\prod_{\ell=1}^{j-1}(k-\lambda_\ell-j+\ell+1-[k-\lambda_\ell ]_2)\right)\Omega_\lambda.
	\end{split}
	\end{equation}
The GZ-basis for $L(p)$ constructed in Theorem \ref{sec4_theo_GZ-basis} differs from the one obtained in \cite{Lievens-Stoilova-VanderJeugt-2008} by a scalar factor on each basis vector. These scalars can be obtained by making $L(p)$ a Hilbert space. This is done using the Hermitian inner product $\langle \cdot,\cdot\rangle$ defined by the relations 
\begin{equation}
\langle v_0,v_0\rangle =1 
\quad \text{ and } \quad 
\langle B_j^\pm v_A, v_B\rangle = \langle v_A, B_j^\mp v_B\rangle,
	\end{equation}
for all $j\in\{1,\dots,n\}$ and $A,B\in\mathbb{Y}(\lambda)$ with $\lambda\in\mathbb{P}$ and $\ell(\lambda)\leq p$. 
The GZ-basis constructed in \cite{Lievens-Stoilova-VanderJeugt-2008} is then an orthonormal basis for $L(p)$ with respect to $\langle \cdot,\cdot\rangle$ and the elements of this basis are given by 
\begin{equation}
\label{sec4_eq_normalization-GZ-basis}
|m_A):=\frac{1}{\sqrt{\langle v_A,v_A\rangle}}v_A,
	\end{equation}
for all $A\in\mathbb{Y}(\lambda)$ with $\lambda\in\mathbb{P}$ and $\ell(\lambda)\leq p$. Of course these observations mean that the GZ-basis obtained in Theorem \ref{sec4_theo_GZ-basis} is orthogonal with respect to $\langle \cdot,\cdot\rangle$.
In \cite{Lievens-Stoilova-VanderJeugt-2008} the basis elements $|m_A)$ are parametrized by GZ-patterns and not s.s. Young tableaux. These parametrizations are equivalent and the entries $m_{ij}$ of the GZ-pattern corresponding to $|m_A)$ can be calculated as follows using the exponent matrix $\gamma_A$.
\begin{equation}
m_{ij}=\sum_{k=i}^j (\gamma_A)_{ki}, \quad (1\leq i\leq j\leq n).
	\end{equation}

We end this section by presenting a result that relate the basis $\{E^{\gamma_A}\Omega_{\lambda_A}\}$ with the GZ-basis constructed in Theorem \ref{sec4_theo_GZ-basis}.
This result is an extrapolation of Theorem A from \cite{Molev-Yakimova-2018} which relates the PBW-type basis for $V(\lambda+\frac{p}{2})$ with the GZ-basis for $V(\lambda+\frac{p}{2})$. 
Recall that these bases are given in Theorem \ref{sec2_theo_PBW-type-basis-u(n)} and \eqref{sec4_eq_GZ-basis-gl} respectively.

\begin{theo}
\label{sec4_theo_triangular-transition-matrix}
There exists a total order $>$ on the index set $\{A\in \mathbb{Y}(\lambda): \lambda\in\mathbb{P},\ell(\lambda)\leq p\}$, such that 
\begin{equation}
v_A = \sum_{B\geq A} T_{AB} E^{\gamma_B}\Omega_{\lambda_B},
	\end{equation}
$T_{AB}\in\C$ and $T_{AA}\neq 0$, for all $A\in\mathbb{Y}(\lambda)$ and $\lambda\in\mathbb{P}$ with $\ell(\lambda)\leq p$.
	\end{theo}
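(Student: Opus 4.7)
The plan is to reduce Theorem \ref{sec4_theo_triangular-transition-matrix} to Theorem~A of \cite{Molev-Yakimova-2018}, which is the analogous triangular comparison between the PBW-type basis and the GZ-basis inside a single simple finite-dimensional $\mathfrak{gl}(n)$-module $V(\lambda+\frac{p}{2})$. The multiplicity-free decomposition of Theorem \ref{sec2_theo_decomposition-into-gl(n)-modules} allows us to do this component by component. Fix $\lambda\in\mathbb{P}$ with $\ell(\lambda)\leq p$. By Proposition \ref{sec3_prop_highest-weight-vector}, $\Omega_\lambda$ is a nonzero $\mathfrak{gl}(n)$-highest weight vector of weight $\lambda+\frac{p}{2}$ in $L(p)$, hence spans the highest-weight line of the component $V(\lambda+\frac{p}{2})\subset L(p)$. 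The identity \eqref{sec4_eq_gl-h.w.-vector-raising-operators}, combined with the fact that $(z_n^+)^{\lambda_n}\cdots(z_1^+)^{\lambda_1}v_0$ is nonzero (otherwise all vectors $v_A$ with $\lambda_A=\lambda$ in Theorem \ref{sec4_theo_GZ-basis} would vanish), yields $(z_n^+)^{\lambda_n}\cdots(z_1^+)^{\lambda_1}v_0 = c_\lambda\,\Omega_\lambda$ for an explicit nonzero scalar $c_\lambda$.

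Next I would make precise the match between the two bases on each summand. On the one hand, Theorem \ref{sec2_theo_PBW-type-basis-u(n)} applied to the component $V(\lambda+\frac{p}{2})\subset L(p)$, taking $\Omega_\lambda$ as the distinguished highest weight vector (which is allowed because $\xi_\lambda$ in Corollary \ref{sec2_coro_PBW-type-basis-osp} is only defined up to scalar), produces exactly the PBW-type basis $\{E^{\gamma_A}\Omega_\lambda : A\in\mathbb{Y}(\lambda)\}$. On the other hand, by the definition given in Theorem \ref{sec4_theo_GZ-basis},
\begin{equation*}
v_A \;=\; c_\lambda\,\prod_{k=2,\dots,n}^{\rightarrow}\!\bigl(y_{k1}^{(\gamma_A)_{k1}}\cdots y_{k,k-1}^{(\gamma_A)_{k,k-1}}\bigr)\Omega_\lambda.
\end{equation*}
Since each $y_{kj}$ lies in the Mickelsson algebra $S(\mathfrak{gl}(k),\mathfrak{gl}(k-1))\subset U(\mathfrak{gl}(n))$ and acts within the $\mathfrak{gl}(n)$-submodule generated by $\Omega_\lambda$, the right-hand side is precisely $c_\lambda$ times the Molev--Yakimova GZ-basis vector \eqref{sec4_eq_GZ-basis-gl} inside $V(\lambda+\frac{p}{2})$ associated to $A$.

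Theorem~A of \cite{Molev-Yakimova-2018} then supplies, for each $\lambda$, a total order $>_\lambda$ on $\mathbb{Y}(\lambda)$ and coefficients $T^{(\lambda)}_{AB}\in\C$ with $T^{(\lambda)}_{AA}\neq 0$ such that the GZ-basis vector at $A$ expands as $\sum_{B\geq_\lambda A} T^{(\lambda)}_{AB} E^{\gamma_B}\Omega_\lambda$. Multiplying by $c_\lambda$ and setting $T_{AB}:=c_\lambda T^{(\lambda)}_{AB}$ gives the desired expansion of $v_A$ with $T_{AA}\neq 0$. To assemble a single total order on $\bigcup_{\ell(\lambda)\leq p}\mathbb{Y}(\lambda)$ I would fix any total order on the set of admissible partitions and then concatenate the orders $>_\lambda$ accordingly; because $v_A$ lives in $V(\lambda_A+\frac{p}{2})$ and therefore only involves vectors $E^{\gamma_B}\Omega_{\lambda_B}$ with $\lambda_B=\lambda_A$, the full transition matrix is block diagonal in $\lambda$, with each block upper triangular with nonzero diagonal, hence upper triangular globally. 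The main technical point, and what I expect to be the chief obstacle, is verifying the identification in the previous paragraph: one must check that the intrinsic action of the Mickelsson algebra elements $y_{kj}$ on the abstract simple $\mathfrak{gl}(n)$-module $V(\lambda+\frac{p}{2})$ agrees with their action as elements of $U(\mathfrak{osp}(1|2n))$ on the component $V(\lambda+\frac{p}{2})\subset L(p)$; this reduces to observing that the Mickelsson ideal for $(\mathfrak{gl}(k),\mathfrak{gl}(k-1))$ acts by zero on $\mathfrak{gl}(k-1)$-highest weight vectors in any $\mathfrak{gl}(k)$-module, regardless of the ambient algebra.
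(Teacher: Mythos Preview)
Your proposal is correct and follows exactly the approach the paper itself takes: the paper states the theorem as ``an extrapolation of Theorem A from \cite{Molev-Yakimova-2018}'' and offers no further proof beyond referring to that reference for the ordering and to Proposition~\ref{sec5_prop_transition-matrix-n=3} for explicit coefficients when $n=3$. Your reduction via the multiplicity-free decomposition of Theorem~\ref{sec2_theo_decomposition-into-gl(n)-modules}, the identification of $(z_n^+)^{\lambda_n}\cdots(z_1^+)^{\lambda_1}v_0$ with $c_\lambda\Omega_\lambda$ via \eqref{sec4_eq_gl-h.w.-vector-raising-operators}, and the block-diagonal assembly of the orders $>_\lambda$ spell out precisely the details the paper leaves implicit.
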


Theorem \ref{sec4_theo_triangular-transition-matrix} tells us that there is a upper triangular transition matrix between the two bases. Details regarding the ordering $<$ of $\{A\in \mathbb{Y}(\lambda): \lambda\in\mathbb{P},\ell(\lambda)\leq p\}$ can be found in \cite{Molev-Yakimova-2018}. The coefficients $T_{AB}$ are calculated explicitly in Proposition~\ref{sec5_prop_transition-matrix-n=3} for $n=3$.

Together Theorem \ref{sec4_theo_triangular-transition-matrix}, Corollary \ref{sec3_coro_PBW-bases-Omega_A} and \eqref{sec3_eq_def-Omega_A} present a way to express any GZ-basis vector $v_A$ as a polynomial in the $B_j^+$'s acting on the $\mathfrak{osp}(1|2n)$-lowest weight vector $v_0$. Recall here that the $B_j^+$'s represent parabosonic creation operators in the context of parabosonic field theories.

\section{Example: The case $n=3$}
\label{sec5}

We will now study the case $n=3$ as an illustrative example of the results obtained in this paper. In this section we will emphasize the connections with the theory of parabosons which has been noted at various points throughout the paper. From this perspective $\mathfrak{osp}(1|6)$ is the Lie superalgebra generated by the parabosonic creation and annihilation operators $B_1^+$, $B_2^+$, $B_3^+$, $B_1^-$, $B_2^-$ and $B_3^-$, $L(p)$ is the Fock space of $3$ parabosonic particles of order~$p$ and $v_0$ is the vacuum state in $L(p)$.

In Sections \ref{sec2} and \ref{sec3} we constructed a new basis for the Fock space $L(p)$ consisting of the vectors 
\begin{equation}
\label{sec5_eq_basis-element-identity}
E^{\gamma_A} \Omega_{\lambda_A}=\frac{\lambda_A!}{\diag(\gamma_A)!}\Omega_A
	\end{equation}
for all $A\in\mathbb{Y}$ with $\ell(\lambda_A)\leq p$, that is, for all s.s. Young tableaux with at most $p$ rows. When $n=3$ and $p\geq 3$ any such tableau $A$ takes either of the follows two forms
\begin{equation}
\ytableausetup{aligntableaux=top,smalltableaux}
A=\underbrace{\ytableaushort{1,2,3}\cdots\ytableaushort{1,2,3}}_{k_{123}}
	\underbrace{\ytableaushort{1,2}\cdots\ytableaushort{1,2}}_{k_{12}}
	\underbrace{\ytableaushort{1,3}\cdots\ytableaushort{1,3}}_{k_{13}}
	\underbrace{\ytableaushort{1}\cdots\ytableaushort{1}}_{k_{1}}
	\underbrace{\ytableaushort{2}\cdots\ytableaushort{2}}_{k_{2}}
	\underbrace{\ytableaushort{3}\cdots\ytableaushort{3}}_{k_{3}}
	\end{equation}
or 
\begin{equation}
\ytableausetup{aligntableaux=top,smalltableaux}
A=\underbrace{\ytableaushort{1,2,3}\cdots\ytableaushort{1,2,3}}_{k_{123}}
	\underbrace{\ytableaushort{1,2}\cdots\ytableaushort{1,2}}_{k_{12}}
	\underbrace{\ytableaushort{1,3}\cdots\ytableaushort{1,3}}_{k_{13}}
	\underbrace{\ytableaushort{2,3}\cdots\ytableaushort{2,3}}_{k_{23}}
	\underbrace{\ytableaushort{2}\cdots\ytableaushort{2}}_{k_{2}}
	\underbrace{\ytableaushort{3}\cdots\ytableaushort{3}}_{k_{3}},
	\end{equation}
where $k_{i_1,\dots,i_s}\in \N_0$, for $s\in\{1,2,3\}$ and $1\leq i_1<\cdots<i_s\leq 3$, is the number of columns in $A$ with entries $i_1,\dots,i_s$.

The description of the basis elements of $L(p)$ as given by the identity  \eqref{sec5_eq_basis-element-identity} comes from Corollary \ref{sec3_coro_PBW-bases-Omega_A}. 
The first expression $E^{\gamma_A} \Omega_{\lambda_A}$ illustrates the relationship of the basis with the branching $\mathfrak{osp}(1|6)\supset \mathfrak{gl}(3)$. Here $\Omega_\lambda$ is a $\mathfrak{gl}(3)$-highest weight vector in  $L(p)$, $E^{\gamma_A}$ is a monomial in the $\mathfrak{gl}(3)$-positive root vectors $E_{ij}$, for $1\leq j<i\leq 3$, and $\gamma_A$ is the exponent matrix of $A$ which has entries
\begin{equation}
(\gamma_A)_{ij}=\#\{ \text{$i$'s in the $j$'th row of $A$} \}.
	\end{equation}
Using the shorthand $\lambda=\lambda_A$ and $\gamma=\gamma_A$ we can write the vector $E^{\gamma_A} \Omega_{\lambda_A}$ in terms of the paraboson creation and annihilation operators using the identities $\Omega_\lambda=\lambda!\omega_\lambda$ and $E_{ij}=\frac{1}{2}\{B_i^+,B_j^-\}$ together with \eqref{sec2_eq_E-gamma-matrix}, \eqref{sec3_eq_def-omega_A} and \eqref{sec3_eq_def-multi-bracket}
\begin{equation}
\label{sec5_eq_basis-vector-expression-1}
\begin{split}
E^{\gamma} \Omega_{\lambda}
	= 
	E_{21}^{\gamma_{21}}E_{31}^{\gamma_{31}}E_{32}^{\gamma_{32}}\Omega_\lambda
	&=
	\frac{\lambda_1!\lambda_2!\lambda_3!}{2^{\gamma_{21}+\gamma_{31}+\gamma_{32}}}\{B_2^+,B_1^-\}^{\gamma_{21}}\{B_3^+,B_1^-\}^{\gamma_{31}}\{B_3^+,B_2^-\}^{\gamma_{32}}
	\times
	\\&\qquad\times
	[B_1^+,B_2^+,B_3^+]^{\lambda_3}[B_1^+,B_2^+]^{\lambda_2-\lambda_3}(B_1^+)^{\lambda_1-\lambda_2}v_0.
	\end{split}
	\end{equation}
The second expression given in \eqref{sec5_eq_basis-element-identity} gives a basis element corresponding to the tableau $A$ as a polynomial in the parabosonic creation operators $B_j^+$, for $j\in\{1,2,3\}$, acting on the vacuum vector $v_0\in L(p)$. 
To make this explicit we keep the shorthand $\lambda=\lambda_A$ and use \eqref{sec3_eq_def-omega_A}, \eqref{sec3_eq_def-Omega_A} and \eqref{sec3_eq_def-multi-bracket} to write
\begin{equation}
\label{sec5_eq_basis-vector-expression-2}
\begin{split}
\Omega_A&=
	\sum_{\tau\in S_\lambda} \omega_{A^\tau}
	\\&=\sum_{\tau\in S_{\lambda}}[B_{A^\tau(1,1)}^+,\dots,B_{A^\tau(\lambda_1',1)}^+]\cdots [B_{A^\tau(1,\ell(\lambda'))}^+,\dots,B_{A^\tau(\lambda_{\ell(\lambda')}',\ell(\lambda'))}^+]v_0.
	\end{split}
	\end{equation}
Here $A^\tau$ is the row-permuted Young tableau obtained by permuting $A$ with $\tau$ and $A^\tau(k,l)$ is the entry of $A^\tau$ in the $k$'th row and $l$'th column.

To illustrate the formulas \eqref{sec5_eq_basis-vector-expression-1} and \eqref{sec5_eq_basis-vector-expression-2} we consider the following example. If  
\begin{equation}
\ytableausetup{centertableaux,smalltableaux}
A=\ytableaushort{1133,22} ,
\quad\text{ then }\quad 
\gamma_A:=\left(
\begin{matrix}
2&0&0\\
0&2&0\\
2&0&0
\end{matrix}
\right)
\quad \text{ and } \quad 
\lambda_A=(4,2,0).
	\end{equation}
Furthermore,
\begin{equation}
E^{\gamma_A}\Omega_{\lambda_A}
	=E_{31}^2\Omega_{(4,2,0)}
	=12 \{B_3^+,B_1^-\}^2[B_1^+,B_2^+]^2(B_1^+)^2v_0
	\end{equation}
and
\begin{equation}
\begin{split}
\Omega_A
	&=
	\sum_{\tau\in S_{(4,2,0)}} \omega_{A^\tau}
	\\&=
	8\omega_{\ytableaushort{1133,22}}
	+8\omega_{\ytableaushort{1313,22}}
	+8\omega_{\ytableaushort{3113,22}}
	+8\omega_{\ytableaushort{1331,22}}
	+8\omega_{\ytableaushort{3131,22}}
	+8\omega_{\ytableaushort{3311,22}}	
	\\&=
	8[B_1^+,B_2^+]^2(B_3^+)^2v_0
	+8[B_1^+,B_2^+][B_3^+,B_2^+]B_1^+B_3^+v_0
	\\&\qquad\qquad 
	+8[B_3^+,B_2^+][B_1^+,B_2^+]B_1^+B_3^+v_0
	+8[B_1^+,B_2^+][B_3^+,B_2^+]B_3^+B_1^+v_0
	\\&\qquad\qquad 
	+8[B_3^+,B_2^+][B_1^+,B_2^+]B_3^+B_1^+v_0
	+8[B_3^+,B_2^+]^2(B_1^+)^2v_0
	.
	\end{split}
	\end{equation}

In Section \ref{sec4} we studied the Mickelsson-Zhelobenko algebra $Z(\mathfrak{osp}(1|2n),\mathfrak{gl}(n))$, which is $Z(\mathfrak{osp}(1|6),\mathfrak{gl}(3))$ when $n=3$. According to Theorem \ref{sec4_theo_mickelsson-zhelobenko-generator} and \eqref{sec4_eq_mickelsson-zhelobenko-generator-identity-3} this algebra is generated by the raising and lowering operators $z_j^\pm$ and $z_{ij}^\pm$, for $1\leq i\leq j\leq 3$.
The raising and lowering operators $z_1^+$, $z_2^+$, $z_3^+$, $z_1^-$, $z_2^-$ and $z_3^-$ can be written explicitly as follows modulo the ideal $\mathfrak{J}'$.
\begin{equation}
\label{sec5_mickelsson-algebra-generators-explicit-n=3}
\begin{split}
z_1^+&= B_1^+
	\\
z_2^+&= B_2^+(h_1-h_2) -E_{21}B_1^+
	\\
z_3^+&= B_3^+(h_1-h_3)(h_2-h_3) -E_{32}B_2^+(h_1-h_3) -E_{31}B_1^+(h_2-h_3-1) + E_{21}E_{32}B_1^+
	\\
z_1^-&= B_1^-(h_1-h_2)(h_1-h_3) + E_{21}B_2^-(h_1-h_3) +E_{31}B_3^-(h_1-h_2) + E_{21}E_{32}B_3^-
	\\
z_2^-&= B_2^-(h_2-h_3) +E_{32}B_3^-
	\\
z_3^-&= B_3^-
	\end{split}
	\end{equation}

In Section \ref{sec4.3} we used these raising and lowering operators in the proof of Proposition \ref{sec4_prop_h.w.-action}, which gives the actions of the paraboson creation and annihilation operators $B_j^\pm$ on any $\mathfrak{gl}(n)$-highest weight vector $\Omega_\lambda$ as a linear combination of vectors of the form $E^{e_I}\Omega_{\lambda\pm\epsilon_i}$. It was noted that these linear combinations are not in general expansions of $B_j^\pm\Omega_\lambda$ into linear combinations of the vectors from the basis $\{E^{\gamma_A}\Omega_{\lambda_A} :A\in\mathbb{Y}, \ell(\lambda_A)\leq p\}$. The reasons for this were discussed in detail at the end of Section \ref{sec4.3}. 
When $n=3$ we may apply some small modifications to the expansions from Proposition \ref{sec4_prop_h.w.-action} to get proper basis expansions of the vectors $B_j^\pm\Omega_\lambda$. 

\begin{prop}
\label{sec5_prop_action-h.w.-n=3}
Let $n=3$ and $\lambda\in\mathbb{P}$. Then the vectors $B_j^\pm\Omega_\lambda$ have the following expansions into linear combinations of the vectors from the basis $\{E^{\gamma_A}\Omega_{\lambda_A} :A\in\mathbb{Y}, \ell(\lambda_A)\leq p\}$ for $L(p)$.
\begin{equation}
\label{sec5_eq_action-h.w.-n=3-1}
\begin{split}
B_1^+\Omega_\lambda
	&= 
	d_1^+(\lambda)\Omega_{\lambda+\epsilon_1}
	\\
B_2^+\Omega_\lambda
	&= 
	d_2^+(\lambda)\Omega_{\lambda+\epsilon_2} +\frac{d_1^+(\lambda)}{\lambda_1-\lambda_2+1}E_{21}\Omega_{\lambda+\epsilon_1}
	\\
B_3^+\Omega_\lambda
	&=
	d_3^+(\lambda)\Omega_{\lambda+\epsilon_3} + \frac{d_2^+(\lambda)}{\lambda_2-\lambda_3+1}E_{32}\Omega_{\lambda+\epsilon_2} + \frac{d_1^+(\lambda)(\lambda_1-\lambda_2+2)}{(\lambda_1-\lambda_2+1)(\lambda_1-\lambda_3+2)}E_{31}\Omega_{\lambda+\epsilon_1} 
	\\&\quad
	+ \frac{d_1^+(\lambda)}{(\lambda_1-\lambda_2+1)(\lambda_1-\lambda_3+2)}E_{21}E_{32}\Omega_{\lambda+\epsilon_1}
	\\\\
B_1^-\Omega_\lambda	&=
	d_1^-(\lambda)\Omega_{\lambda-\epsilon_1} - \frac{d_2^-(\lambda)}{\lambda_1-\lambda_2+1}E_{21}\Omega_{\lambda-\epsilon_2} -\frac{(1-\delta_{\lambda_1\lambda_2})d_3^-(\lambda)}{\lambda_1-\lambda_3+2}E_{31}\Omega_{\lambda-\epsilon_3}
	\\&\quad
	+ \frac{d_3^-(\lambda)(1+\delta_{\lambda_1\lambda_2}(\lambda_1-\lambda_2+1))}{(\lambda_1-\lambda_2+1)(\lambda_1-\lambda_3+2)}E_{21}E_{32}\Omega_{\lambda-\epsilon_3}
	\\
B_2^-\Omega_\lambda
	&=
	d_2(\lambda)\Omega_{\lambda-\epsilon_2} - \frac{d_3^-(\lambda)}{\lambda_2-\lambda_3+1}E_{32}\Omega_{\lambda-\epsilon_3}
	\\
B_3^-\Omega_\lambda
	&=
	d_3^-(\lambda)\Omega_{\lambda-\epsilon_3}.
	\end{split}
	\end{equation}
Here $\Omega_{\lambda\pm\epsilon_i}:=0$ if $\lambda\pm\epsilon_i\notin\mathbb{P}$.
	\end{prop}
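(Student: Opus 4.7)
The plan is to take the general expansions of Proposition~\ref{sec4_prop_h.w.-action} as a starting point and specialize them to $n=3$, then rewrite any term that is not already of the form $E^{\gamma_A}\Omega_{\lambda_A}$. First, I would enumerate the index sets $\mathcal{I}_{ij}(s)$ and $\mathcal{I}_{ji}(s)$ for $i,j\in\{1,2,3\}$ (each contains at most two elements) and substitute them into \eqref{sec4_eq_h.w.-action-1}--\eqref{sec4_eq_h.w.-action-2}, so that each of the six expansions $B_j^\pm\Omega_\lambda$ becomes a short, completely explicit linear combination of vectors of the form $E^{e_I}\Omega_{\lambda\pm\epsilon_i}$ with coefficients expressed in terms of $d_i^\pm(\lambda)$ and the linear factors $\lambda_i-\lambda_\ell-i+\ell$.

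For each such vector I would then check, via the semistandard criterion of Lemma~\ref{sec3_lemm_betweenness-like-condition} applied to the lower-triangular completion of $e_I$ with column sum $\lambda\pm\epsilon_i$, whether it is already (up to a scalar) a basis element of $L(p)$. For the five operators $B_1^+$, $B_2^+$, $B_3^+$, $B_2^-$, $B_3^-$ a short case analysis shows that every surviving term is either zero (precisely when $\lambda\pm\epsilon_i\notin\mathbb{P}$, so $\Omega_{\lambda\pm\epsilon_i}=0$) or already a basis vector. Hence for these five operators the claimed formulas follow by a direct rearrangement of the raw output, together with the sign identities such as $\lambda_2-\lambda_1-1=-(\lambda_1-\lambda_2+1)$.

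The main obstacle is the formula for $B_1^-\Omega_\lambda$. Here the term $E_{31}\Omega_{\lambda-\epsilon_3}$ fails the semistandard criterion precisely when $\lambda_1=\lambda_2$: the column of length two in position $\lambda_1$ of the associated tableau would contain two equal entries, so strict increase along that column is violated. To resolve this I would invoke the commutator identity $[E_{32},E_{21}]=E_{31}$ in $\mathfrak{gl}(3)$, together with the observation that $E_{21}\Omega_{\lambda-\epsilon_3}=0$ when $\lambda_1=\lambda_2$. The latter follows from Lemma~\ref{sec3_lemm_u(n)-action-on-Omega}, since the resulting tableau has a column with two equal entries so every $\omega$-summand in \eqref{sec3_eq_def-Omega_A} vanishes (equivalently, it reflects the $\mathfrak{sl}(2)$-module structure attached to the simple root $\epsilon_1-\epsilon_2$). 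Thus in the equal-parts case one may substitute $E_{31}\Omega_{\lambda-\epsilon_3}=-E_{21}E_{32}\Omega_{\lambda-\epsilon_3}$ and absorb the resulting contribution into the coefficient of the remaining basis vector.

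Finally, the two cases $\lambda_1=\lambda_2$ and $\lambda_1\neq\lambda_2$ are merged into the single closed form of the statement via the factors $(1-\delta_{\lambda_1\lambda_2})$ and $(1+\delta_{\lambda_1\lambda_2}(\lambda_1-\lambda_2+1))$. The most error-prone step is keeping track of the scalar normalizations coming from Lemma~\ref{sec3_lemm_u(n)-action-on-Omega} and Theorem~\ref{sec3_theo_PBW-type-basis-vector-variable-case} while performing this merger, so I expect that bookkeeping to be the main technical difficulty; the rest of the argument is a direct specialization of machinery already established in Sections~\ref{sec3} and~\ref{sec4}.
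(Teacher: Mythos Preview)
Your proposal is correct and follows essentially the same route as the paper: specialize Proposition~\ref{sec4_prop_h.w.-action} to $n=3$, observe that only the $B_1^-$ expansion needs adjustment, and in the case $\lambda_1=\lambda_2$ use $E_{31}=[E_{32},E_{21}]$ together with $E_{21}\Omega_{\lambda-\epsilon_3}=0$ to rewrite $E_{31}\Omega_{\lambda-\epsilon_3}=-E_{21}E_{32}\Omega_{\lambda-\epsilon_3}$, then verify via Lemma~\ref{sec3_lemm_betweenness-like-condition} that every surviving term is a basis vector. The only cosmetic difference is that the paper justifies $E_{21}\Omega_{\lambda-\epsilon_3}=0$ by the highest-weight identity $E_{i+1,i}^{\mu_i-\mu_{i+1}+1}\Omega_\mu=0$ rather than Lemma~\ref{sec3_lemm_u(n)-action-on-Omega}, but these are equivalent here.
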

\begin{proof}
The only difference between these expansions and those in Proposition \ref{sec4_prop_h.w.-action} appear in the expansion of $B_1^-\Omega_\lambda$. To obtain the one from the other it is sufficient to note that if $\lambda_1=\lambda_2$, then 
\begin{equation}
\label{sec5_eq_action-h.w.-n=3-proof-1}
E_{31}\Omega_{\lambda-\epsilon_3}
	=E_{32}E_{21}\Omega_{\lambda-\epsilon_3}-E_{21}E_{32}\Omega_{\lambda-\epsilon_3}
	=-E_{21}E_{32}\Omega_{\lambda-\epsilon_3}.
	\end{equation}
Here we got the second identity in \eqref{sec5_eq_action-h.w.-n=3-proof-1} by noting the following identities are satisfied by any $\mathfrak{gl}(n)$-highest weight vector $\Omega_\mu$.
\begin{equation}
E_{i+1,i}^{\mu_i-\mu_{i+1}+1}\Omega_\mu=0, \quad (1\leq i \leq n-1).
	\end{equation}
With this we have proven that the expansions in \eqref{sec5_eq_action-h.w.-n=3-1} are satisfied. It only remains to be proven that the expansions are linear combinations of the vectors from the basis $\{E^{\gamma_A}\Omega_{\lambda_A} :A\in\mathbb{Y}, \ell(\lambda_A)\leq p\}$. This is done using Lemma \ref{sec3_lemm_betweenness-like-condition}. Consider for example the term 
\begin{equation}
\label{sec5_eq_action-h.w.-n=3-2}
-\frac{(1-\delta_{\lambda_1\lambda_2})d_3^-(\lambda)}{\lambda_1-\lambda_3+2}E_{31}\Omega_{\lambda-\epsilon_3}
	\end{equation}
in the expansion of $B_1^-\Omega_\lambda$. The coefficient in this term is zero when $\lambda_1=\lambda_2$, so we only need to prove that $E_{31}\Omega_{\lambda-\epsilon_3}$ is a basis vector when $\lambda_1>\lambda_2$.
By noting that 
\begin{equation}
E_{31}\Omega_{\lambda-\epsilon_3}=E^{(\lambda_1-1)e_{11}+\lambda_2e_{22} +(\lambda_3-1)e_{33}+e_{31}}\Omega_{\lambda-\epsilon_3}
	\end{equation}
we can apply Lemma \ref{sec3_lemm_betweenness-like-condition} to the matrix $\gamma=(\lambda_1-1)e_{11}+\lambda_2e_{22} +(\lambda_3-1)e_{33}+e_{31}$. 
Lemma \ref{sec3_lemm_betweenness-like-condition} tells us that $D(\gamma)$ is a s.s. Young tableau of shape $\lambda-\epsilon_3$ when $\lambda_1>\lambda_2$. Consequently, the term \eqref{sec5_eq_action-h.w.-n=3-2} is zero or a scalar multiple of a basis vector.
A similar line of reasoning can be applied to each term in the expansions \eqref{sec5_eq_action-h.w.-n=3-1}. This proves that the expansions are indeed linear combinations of basis vectors.
	\end{proof}

In Section \ref{sec4.4} we constructed a GZ-basis for the module $L(p)$ and described properties of the transition matrix relating it to the basis $\{E^{\gamma_A}\Omega_{\lambda_A} :A\in\mathbb{Y}, \ell(\lambda_A)\leq p\}$, see Theorem \ref{sec4_theo_GZ-basis} and Theorem \ref{sec4_theo_triangular-transition-matrix}. The elements of the GZ-basis were expressed using the raising operators $z_j^+$ from the Mickelsson-Zhelobenko algebra $Z(\mathfrak{osp}(1|2n),\mathfrak{gl}(n))$ together with the lowering operators $y_{mj}$ from the Mickelsson-Zhelobenko algebras $Z(\mathfrak{gl}(m),\mathfrak{gl}(m-1))$, for $m\in\{2,\dots,n\}$.
The raising and lowering operators generating the algebras $Z(\mathfrak{gl}(2),\mathfrak{gl}(1))$ and $Z(\mathfrak{gl}(3),\mathfrak{gl}(2))$ can be expressed as follows using \eqref{sec4_eq_gl-raising}, \eqref{sec4_eq_gl-lowering} and \eqref{sec4_eq_gl-raising-lowering}.
\begin{equation}
\label{sec5_eq-gl-raising-lowering-identities}
\begin{split}
y_{12}&=E_{12}
	\\
y_{13}&=E_{13}
	\\
y_{23}&=E_{23}(h_2-h_1)+E_{21}E_{13}
	\\
y_{21}&=E_{21}
	\\
y_{31}&=E_{31}(h_1-h_2)+E_{21}E_{32}
	\\
y_{32}&=E_{32}.
	\end{split}
	\end{equation}
It is important to note that these identities hold only modulo an appropriate ideal given in the definition of $Z(\mathfrak{gl}(m),\mathfrak{gl}(m-1))$. 
The appearance of this ideal may be disregarded when we interpret the elements of $Z(\mathfrak{gl}(m),\mathfrak{gl}(m-1))$ as operators acting on the space of $\mathfrak{gl}(m-1)$-highest weight vectors in $L(p)$. This is because the elements of the ideal act as the zero operator.

When $n=3$, the vectors of the GZ-basis for $L(p)$ defined in Theorem \ref{sec4_theo_GZ-basis} are given by
\begin{equation}
v_A:= y_{21}^{\gamma_{21}}y_{31}^{\gamma_{31}}y_{32}^{\gamma_{32}}(z_3^+)^{\lambda_3}(z_2^+)^{\lambda_2}(z_1^+)^{\lambda_1}v_0,
	\end{equation}
for $A\in\mathbb{Y}$ with $\ell(\lambda_A)\leq p$. Here we have used the shorthand notations $\gamma=\gamma_A$ and $\lambda=\lambda_A$.

We can now make two important observations regarding the vectors in the GZ-basis for $L(p)$. These are stated in Proposition~\ref{sec5_prop_transition-matrix-n=3}. Firstly, we can use the identities in \eqref{sec5_eq-gl-raising-lowering-identities} to give explicitly the expansion of the any GZ-basis vector as a linear combination of vectors from the basis $\{E^{\gamma_A}\Omega_{\lambda_A} :A\in\mathbb{Y}, \ell(\lambda_A)\leq p\}$. Secondly, we can use this expansion together with Corollary \ref{sec3_coro_PBW-bases-Omega_A} to explicitly express any GZ-basis vector as a linear combination of the vectors $\Omega_A$. Since each vector $\Omega_A$ is defined in \eqref{sec3_eq_def-Omega_A} as a polynomial of parabosonic creation operators $B_j^+$ acting on the vacuum $v_0$, then the second statement in Proposition~\ref{sec5_prop_transition-matrix-n=3} gives explicit descriptions of the GZ-basis vectors as polynomials of parabosonic creation operators $B_j^+$ acting on the vacuum $v_0$.

\begin{prop}
\label{sec5_prop_transition-matrix-n=3}
Let $n=3$ and $A\in\mathbb{Y}$ with $\ell(\lambda_A)\leq p$. Then 
\begin{align}
v_A
	&= \sum_{\ell=0}^{\gamma_{31}} d(\lambda){\gamma_{31} \choose \ell}(\lambda_1-\lambda_2+2-\gamma_{31}+\gamma_{32}+\ell)_{\gamma_{31}-\ell}E^{\gamma(\ell)}\Omega_\lambda
	\label{sec5_eq_transition-matrix-n=3-1}
	\\
	&= \sum_{\ell=0}^{\gamma_{31}} \frac{d(\lambda)\lambda!}{\diag(\gamma(\ell))!}{\gamma_{31} \choose \ell}(\lambda_1-\lambda_2+2-\gamma_{31}+\gamma_{32}+\ell)_{\gamma_{31}-\ell}\Omega_{D(\gamma(\ell))},
	\label{sec5_eq_transition-matrix-n=3-2}
	\end{align}
where $\gamma(\ell):=\gamma+\ell(e_{21}-e_{31}+e_{32}-e_{22})$ and 
\begin{equation}
d(\lambda):=\prod_{j=1}^3\frac{(-1)^{\frac{\lambda_j(\lambda_j+1)}{2}}}{\lambda_j!j^{\lambda_j}}\prod_{k=0}^{\lambda_j-1}\prod_{\ell=1}^{j-1}(k-\lambda_\ell-j+\ell+1-[k-\lambda_\ell ]_2).
	\end{equation}
Here $D(\gamma(\ell))\in\mathbb{Y}(\lambda)$ when $\ell\leq\lambda_2-\lambda_3-\gamma_{32}$ and $E^{\gamma(\ell)}\Omega_\lambda=0=\Omega_{D(\gamma(\ell))}$ when $\ell>\lambda_2-\lambda_3-\gamma_{32}$.
	\end{prop}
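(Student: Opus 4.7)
The plan is to compute $v_A$ directly from its definition
\begin{equation*}
v_A = y_{21}^{\gamma_{21}} y_{31}^{\gamma_{31}} y_{32}^{\gamma_{32}} (z_3^+)^{\lambda_3}(z_2^+)^{\lambda_2}(z_1^+)^{\lambda_1}v_0,
\end{equation*}
reducing it step by step. First I would invoke~\eqref{sec4_eq_gl-h.w.-vector-raising-operators} to rewrite the rightmost product as $d(\lambda)\Omega_\lambda$, and use~\eqref{sec5_eq-gl-raising-lowering-identities} to substitute $y_{21}=E_{21}$ and $y_{32}=E_{32}$. This reduces the task to computing $y_{31}^{\gamma_{31}} E_{32}^{\gamma_{32}}\Omega_\lambda$ and then premultiplying by $E_{21}^{\gamma_{21}}$; the latter step just concatenates $E_{21}$-exponents and produces the operator $E^{\gamma(\ell)}$ of~\eqref{sec2_eq_E-gamma-matrix}, because the eventual summands all carry $E_{21}^{\ell}$ at their left end.

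The heart of the argument is a finite induction on $k$ establishing
\begin{equation*}
y_{31}^k E_{32}^{\gamma_{32}}\Omega_\lambda = \sum_{\ell=0}^{k} \binom{k}{\ell}(\alpha_k+\ell)_{k-\ell}\,T_{k,\ell},\qquad T_{k,\ell}:=E_{21}^\ell E_{31}^{k-\ell}E_{32}^{\gamma_{32}+\ell}\Omega_\lambda,
\end{equation*}
with $\alpha_k:=\lambda_1-\lambda_2+2-k+\gamma_{32}$. The required inputs are the elementary commutators $[E_{31},E_{21}]=[E_{31},E_{32}]=0$ and $[E_{32},E_{21}]=E_{31}$, which yield $E_{32}E_{21}^\ell=E_{21}^\ell E_{32}+\ell E_{21}^{\ell-1}E_{31}$ by a one-line subinduction, together with the weight computation showing that every $T_{k,\ell}$ has weight $(\lambda_1-k,\lambda_2-\gamma_{32},\lambda_3+\gamma_{32}+k)+\tfrac{p}{2}$, so that $(h_1-h_2)$ acts on $T_{k,\ell}$ by the $\ell$-independent scalar $D_k:=\alpha_k-1$. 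Applying $y_{31}=E_{31}(h_1-h_2)+E_{21}E_{32}$ to $T_{k,\ell}$ then yields the local recursion $y_{31}T_{k,\ell}=(D_k+\ell)T_{k+1,\ell}+T_{k+1,\ell+1}$, and combining this with Pascal's identity collapses the inductive step onto the two Pochhammer identities $(\alpha_k+\ell)_{k-\ell}(\alpha_k+\ell-1)=(\alpha_k+\ell-1)_{k+1-\ell}=(\alpha_{k+1}+\ell)_{k+1-\ell}$, both immediate from the definition of $(x)_k$. Specialising to $k=\gamma_{31}$ and premultiplying by $E_{21}^{\gamma_{21}}$ produces~\eqref{sec5_eq_transition-matrix-n=3-1}.

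Equation~\eqref{sec5_eq_transition-matrix-n=3-2} then follows from~\eqref{sec5_eq_transition-matrix-n=3-1} by applying Theorem~\ref{sec3_theo_PBW-type-basis-vector-variable-case} to each matrix $\gamma(\ell)$, which is lower triangular with column sum $\lambda$ by direct inspection. The final vanishing/existence assertion splits cleanly: $E^{\gamma(\ell)}\Omega_\lambda=0$ whenever $\ell>\lambda_2-\lambda_3-\gamma_{32}$ because the rightmost factor $E_{32}^{\gamma_{32}+\ell}$ annihilates $\Omega_\lambda$ via the standard relation $E_{i+1,i}^{\mu_i-\mu_{i+1}+1}\Omega_\mu=0$ already invoked in the proof of Proposition~\ref{sec5_prop_action-h.w.-n=3}; and the range $\ell\le\lambda_2-\lambda_3-\gamma_{32}$ for $D(\gamma(\ell))\in\mathbb{Y}(\lambda)$ comes from Lemma~\ref{sec3_lemm_betweenness-like-condition}, whose only non-trivial inequality for $\gamma(\ell)$ reduces to $\gamma(\ell)_{22}\ge\gamma(\ell)_{33}$, i.e., $\lambda_2-\gamma_{32}-\ell\ge\lambda_3$. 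The main obstacle is guessing the binomial-times-Pochhammer ansatz: it arises because the $E_{21}E_{32}$ summand of $y_{31}$ couples $\ell\mapsto\ell+1$ shifts via $[E_{32},E_{21}^\ell]$ with Pascal-style binomial growth, while the $E_{31}(h_1-h_2)$ summand contributes a pure Pochhammer shift through the weight-dependence of $(h_1-h_2)$; once those two mechanisms are disentangled the verification is routine.
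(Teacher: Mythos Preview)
Your proof is correct and follows essentially the same route as the paper: invoke \eqref{sec4_eq_gl-h.w.-vector-raising-operators} to produce $d(\lambda)\Omega_\lambda$, substitute the expressions \eqref{sec5_eq-gl-raising-lowering-identities} for the $y_{ij}$, expand the power $y_{31}^{\gamma_{31}}$ acting on $E_{32}^{\gamma_{32}}\Omega_\lambda$, and then apply Theorem~\ref{sec3_theo_PBW-type-basis-vector-variable-case} and Lemma~\ref{sec3_lemm_betweenness-like-condition} for the second identity and the final assertion. The only difference is one of detail: the paper simply writes $(E_{31}(h_1-h_2)+E_{21}E_{32})^{\gamma_{31}}E_{32}^{\gamma_{32}}\Omega_\lambda$ and jumps directly to the binomial--Pochhammer sum, whereas you supply the explicit induction (via the local recursion $y_{31}T_{k,\ell}=(D_k+\ell)T_{k+1,\ell}+T_{k+1,\ell+1}$) that justifies this step.
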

\begin{proof}
To obtain this result we use \eqref{sec4_eq_gl-h.w.-vector-raising-operators} together with \eqref{sec5_eq-gl-raising-lowering-identities} and Corollary \ref{sec3_coro_PBW-bases-Omega_A} to make the following calculation.
\begin{align}
v_A
	&= d(\lambda)y_{21}^{\gamma_{21}}y_{31}^{\gamma_{31}}y_{32}^{\gamma_{32}}\Omega_\lambda
	\\
	&= d(\lambda)E_{21}^{\gamma_{21}}(E_{31}(h_1-h_2)+E_{21}E_{32})^{\gamma_{31}}E_{32}^{\gamma_{32}}\Omega_\lambda
	\\
	&= \sum_{\ell=0}^{\gamma_{31}} d(\lambda){\gamma_{31} \choose \ell}(\lambda_1-\lambda_2+2-\gamma_{31}+\gamma_{32}+\ell)_{\gamma_{31}-\ell}E^{\gamma(\ell)}\Omega_\lambda
	\\
	&= \sum_{\ell=0}^{\gamma_{31}} \frac{d(\lambda)\lambda!}{\diag(\gamma)!}{\gamma_{31} \choose \ell}(\lambda_1-\lambda_2+2-\gamma_{31}+\gamma_{32}+\ell)_{\gamma_{31}-\ell}\Omega_{D(\gamma(\ell))}.
	\end{align}
Using Lemma \ref{sec3_lemm_betweenness-like-condition} we can show that $D(\gamma(\ell))\in\mathbb{Y}(\lambda)$ when $\ell\leq\gamma_{22}-\gamma_{33}=\lambda_2-\lambda_3-\gamma_{32}$. Since $\Omega_\lambda$ is a $\mathfrak{gl}(3)$-highest weight vector, we know that $E_{32}^k\Omega_\lambda=0$ when $\lambda_2-\lambda_3<k$. This implies that $E^{\gamma(\ell)}\Omega_\lambda=0$ when
$\lambda_2-\lambda_3<(\gamma(\ell))_{32}=\gamma_{32}+\ell$, so when $\ell> \lambda_2-\lambda_3-\gamma_{32}$. Since $E^{\gamma(\ell)}\Omega_\lambda$ and $\Omega_{D(\gamma(\ell))}$ are proportional we can conclude that $\Omega_{D(\gamma(\ell))}=0$ when $\ell> \lambda_2-\lambda_3-\gamma_{32}$.
	\end{proof}
This theorem gives explicitly the coefficients $T_{AB}$ appearing in  Theorem \ref{sec4_theo_triangular-transition-matrix}.
To illustrate Proposition~\ref{sec5_prop_transition-matrix-n=3} we apply it to the vectors corresponding to the s.s. Young tableaux of shape $\lambda=(4,2,0)$ and weight $\mu=(2,2,2)$. These tableaux are 
\begin{equation}
\ytableausetup{centertableaux,smalltableaux}
A(1)=\ytableaushort{1133,22},
\quad
A(2)=\ytableaushort{1123,23}
\quad\text{ and }\quad 
A(3)=\ytableaushort{1122,33}.
	\end{equation}
Using \eqref{sec5_eq_transition-matrix-n=3-1} we can express the GZ-basis vectors $v_{A(1)}$, $v_{A(2)}$ and $v_{A(3)}$ as linear combinations of the basis vectors $E^{\gamma_{A(1)}}\Omega_\lambda$, $E^{\gamma_{A(2)}}\Omega_\lambda$ and $E^{\gamma_{A(3)}}\Omega_\lambda$:

\begin{equation}
\begin{split}
v_{A(1)}
	&= -\frac{1}{2}E^{\gamma_{A(1)}}\Omega_\lambda-\frac{1}{2}E^{\gamma_{A(2)}}\Omega_\lambda-\frac{1}{12}E^{\gamma_{A(3)}}\Omega_\lambda
	\\
v_{A(2)}
	&= -\frac{1}{3}E^{\gamma_{A(2)}}\Omega_\lambda-\frac{1}{12}E^{\gamma_{A(3)}}\Omega_\lambda
	\\
v_{A(3)}
	&= -\frac{1}{12}E^{\gamma_{A(3)}}\Omega_\lambda.
	\end{split}
	\end{equation}
Using these identities we obtain the following expansions describing the inverse basis transition:
\begin{equation}
\begin{split}
E^{\gamma_{A(1)}}\Omega_\lambda
	&= -2v_{A(1)}+3v_{A(2)}-v_{A(3)}
	\\
E^{\gamma_{A(2)}}\Omega_\lambda
	&= -3v_{A(2)}+3v_{A(3)}
	\\
E^{\gamma_{A(3)}}\Omega_\lambda
	&= -12v_{A(3)}.
	\end{split}
	\end{equation}
Using \eqref{sec5_eq_transition-matrix-n=3-2} together with \eqref{sec3_eq_def-Omega_A} we can express the GZ-basis vectors $v_{A(1)}$, $v_{A(2)}$ and $v_{A(3)}$ as polynomials of parabosonic creation operators acting on the vacuum $v_0$:

\begin{equation}
\begin{split}
v_{A(1)}
	&= 
	-6\Omega_{A(1)}-\Omega_{A(2)}-2\Omega_{A(3)}
	\\&=
 	-48[B_1^+,B_2^+]^2(B_3^+)^2v_0
 	-16[B_1^+,B_3^+]^2(B_2^+)^2v_0
 	-16[B_2^+,B_3^+]^2(B_1^+)^2v_0
 	 \\&\qquad\qquad 
	-24[B_1^+,B_2^+][B_1^+,B_3^+]B_2^+B_3^+v_0
	-24[B_1^+,B_2^+][B_1^+,B_3^+]B_3^+B_2^+v_0
	\\&\qquad\qquad 
	+24[B_1^+,B_2^+][B_2^+,B_3^+]B_1^+B_3^+v_0
	+24[B_1^+,B_2^+][B_2^+,B_3^+]B_3^+B_1^+v_0
	\\&\qquad\qquad 
	-24[B_1^+,B_3^+][B_1^+,B_2^+]B_2^+B_3^+v_0
	-24[B_1^+,B_3^+][B_1^+,B_2^+]B_3^+B_2^+v_0
	\\&\qquad\qquad 
	+\ 8[B_1^+,B_3^+][B_2^+,B_3^+]B_1^+B_2^+v_0
	+\ 8[B_1^+,B_3^+][B_2^+,B_3^+]B_2^+B_1^+v_0
	\\&\qquad\qquad 
	+24[B_2^+,B_3^+][B_1^+,B_2^+]B_1^+B_3^+v_0
	+24[B_2^+,B_3^+][B_1^+,B_2^+]B_3^+B_1^+v_0
	\\&\qquad\qquad 
	+\ 8[B_2^+,B_3^+][B_1^+,B_3^+]B_1^+B_2^+v_0
	+\ 8[B_2^+,B_3^+][B_1^+,B_3^+]B_2^+B_1^+v_0.
	\end{split}
	\end{equation}
\begin{equation}
\begin{split}
v_{A(2)}
	&= 
	-8\Omega_{A(2)}-2\Omega_{A(3)}
	\\&=
 	-16[B_1^+,B_3^+]^2(B_2^+)^2v_0
 	+16[B_2^+,B_3^+]^2(B_1^+)^2v_0
 	 \\&\qquad\qquad 
	-16[B_1^+,B_2^+][B_1^+,B_3^+]B_2^+B_3^+v_0
	-16[B_1^+,B_2^+][B_1^+,B_3^+]B_3^+B_2^+v_0
	\\&\qquad\qquad 
	-16[B_1^+,B_2^+][B_2^+,B_3^+]B_1^+B_3^+v_0
	-16[B_1^+,B_2^+][B_2^+,B_3^+]B_3^+B_1^+v_0
	\\&\qquad\qquad 
	-16[B_1^+,B_3^+][B_1^+,B_2^+]B_2^+B_3^+v_0
	-16[B_1^+,B_3^+][B_1^+,B_2^+]B_3^+B_2^+v_0
	\\&\qquad\qquad 
	-16[B_2^+,B_3^+][B_1^+,B_2^+]B_1^+B_3^+v_0
	-16[B_2^+,B_3^+][B_1^+,B_2^+]B_3^+B_1^+v_0.
	\end{split}
	\end{equation}
\begin{equation}
\begin{split}
v_{A(3)}
	&= 
	-2\Omega_{A(3)}
	\\&=
 	-16[B_1^+,B_3^+]^2(B_2^+)^2v_0
 	-16[B_2^+,B_3^+]^2(B_1^+)^2v_0
	\\&\qquad\qquad 
	-16[B_1^+,B_3^+][B_2^+,B_3^+]B_1^+B_2^+v_0
	-16[B_1^+,B_3^+][B_2^+,B_3^+]B_2^+B_1^+v_0
	\\&\qquad\qquad 
	-16[B_2^+,B_3^+][B_1^+,B_3^+]B_1^+B_2^+v_0
	-16[B_2^+,B_3^+][B_1^+,B_3^+]B_2^+B_1^+v_0.
	\end{split}
	\end{equation}

\appendix

\section{Appendix}
\label{appA}

In this appendix we present the calculation of the coefficients $d_{j}^\pm(\lambda)$ appearing in Proposition \ref{sec4_prop_h.w.-action}. To do this calculation we shall make use of the following technical lemma which gives certain useful identities in $U(\mathfrak{osp}(1|2n))$.
\begin{lemm}
\label{appA_lemm_technical}
For any $i_1,\dots,i_k\in\{1,\dots,n\}$ write, following~\eqref{sec3_eq_def-multi-bracket},
\begin{equation}
\label{appA_eq_technical-1}
[B_{i_1}^+,\dots,B_{i_k}^+]
	:=
	\sum_{\sigma\in S_k} \sgn(\sigma) B_{i_{\sigma(1)}}^+\dots B_{i_{\sigma(k)}}^+.
	\end{equation}
Then the following identities hold in $U(\mathfrak{osp}(1|2n))$.
\begin{align}
E_{ij}[B_{i_1}^+,\dots,B_{i_k}^+]
	&= 
	[B_{i_1}^+,\dots,B_{i_k}^+]E_{ij}
	+
	\sum_{t=1}^k 
	\delta_{i,i_t}
	[B_{i_1}^+,\dots,B_{i_{t-1}}^+,B_{i}^+,B_{i_{t-1}}^+,\dots,B_{i_k}^+],
\label{appA_eq_technical-4}
	\\
B_i^+[B_{i_1}^+,\dots,B_{i_k}^+]
	&= 
	(-1)^{k+1}
	[B_{i_1}^+,\dots,B_{i_k}^+]B_i^+
	+
	(-1)^k\frac{2}{k+1}
	[B_{i_1}^+,\dots,B_{i_k}^+,B_i^+]
\label{appA_eq_technical-2}
	\\
B_i^-[B_{i_1}^+,\dots,B_{i_k}^+]
	&= (-1)^k[B_{i_1}^+,\dots,B_{i_k}^+]B_i^-
	\nonumber
	\\&\quad+
	\sum_{t=1}^k (-1)^{t-1}2k[B_{i_1}^+,\dots,B_{i_{t-1}}^+,B_{i_{t+1}}^+,\dots,B_{i_k}^+]E_{i_ti}
	\label{appA_eq_technical-3}
	\\&\quad+
	\sum_{t=1}^k \delta_{ii_t}(-1)^t k(k-1)[B_{i_1}^+,\dots,B_{i_{t-1}}^+,B_{i_{t+1}}^+,\dots,B_{i_k}^+].
	\nonumber
	\end{align}

	\end{lemm}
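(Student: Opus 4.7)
All three identities in Lemma~A.1 are consequences of the defining relations \eqref{sec2_eq_relations-osp} of $\mathfrak{osp}(1|2n)$ together with careful combinatorial bookkeeping for the permutation sums in \eqref{appA_eq_technical-1}. The first identity \eqref{appA_eq_technical-4} is the simplest: since $E_{ij}=\tfrac12\{B_i^+,B_j^-\}$ is even, $[E_{ij},-]$ is a derivation of $U(\mathfrak{osp}(1|2n))$, and \eqref{sec2_eq_relations-osp} (with $\xi=+,\eta=-,\epsilon=+$) yields $[E_{ij},B_k^+]=\delta_{jk}B_i^+$. Apply the derivation to each monomial $B_{i_{\sigma(1)}}^+\cdots B_{i_{\sigma(k)}}^+$ of \eqref{appA_eq_technical-1}: at each factor position one picks up $\delta_{j,i_{\sigma(t)}}$ times the monomial with that factor replaced by $B_i^+$. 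Resumming over $\sigma\in S_k$ with the signs $\sgn(\sigma)$ immediately reproduces the claimed sum of multi-brackets in which one $B_{i_t}^+$ has been replaced by $B_i^+$.

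For identity \eqref{appA_eq_technical-2}, I would move $B_i^+$ past the $k$ odd factors on its right using $B_i^+B_l^+=-B_l^+B_i^++\{B_i^+,B_l^+\}$. The critical observation is that $\{B_i^+,B_l^+\}$ is a root vector of weight $\epsilon_i+\epsilon_l$ which is \emph{central} with respect to every $B_m^+$: this follows from \eqref{sec2_eq_relations-osp} with $\xi=\eta=\epsilon=+$, giving $[\{B_i^+,B_l^+\},B_m^+]=0$. Iterating the anticommutation therefore produces $(-1)^k[B_{i_1}^+,\dots,B_{i_k}^+]B_i^+$ plus additive corrections of the form $\{B_i^+,B_{i_{\sigma(t)}}^+\}$ sitting inside the product at arbitrary positions (centrality lets them slide freely). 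Once these corrections are antisymmetrised over $\sigma\in S_k$ and grouped with the signs, they recombine into a scalar multiple of the longer multi-bracket $[B_{i_1}^+,\dots,B_{i_k}^+,B_i^+]$. A short count of how many of the $(k+1)!$ permutations in $S_{k+1}$ place $B_i^+$ in each slot, together with the sign change from moving it there, supplies the prefactor $(-1)^k\,2/(k+1)$.

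Identity \eqref{appA_eq_technical-3} follows the same strategy but requires an extra stage. Moving $B_i^-$ past each $B_l^+$ uses $\{B_i^-,B_l^+\}=2E_{l,i}$, so the corrections are now the even operators $E_{l,i}$, which (contrary to the previous case) do \emph{not} commute with the $B_m^+$'s. In a first stage I would anticommute $B_i^-$ fully to the right in each monomial to obtain $(-1)^k[\dots]B_i^-$ together with ``primary'' corrections $2E_{i_{\sigma(t)},i}$ sitting at the former position of $B_{i_{\sigma(t)}}^+$; antisymmetrising over $S_k$ produces the middle sum of \eqref{appA_eq_technical-3}, with coefficient $2k$ accounting for the $k$ slot positions. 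In a second stage I would slide each $E_{i_{\sigma(t)},i}$ all the way to the right using $[E_{l,i},B_m^+]=\delta_{im}B_l^+$ (a special case of identity \eqref{appA_eq_technical-4}); whenever the operator passes a factor $B_m^+$ with $m=i$, a ``secondary'' correction is produced in which that factor is replaced by $B_l^+$ and two further factors have been consumed. Antisymmetrising these secondary contributions yields the third sum of the identity, with the coefficient $k(k-1)$ arising from $k$ primary positions times $k-1$ subsequent factors encountered.

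The main obstacle is precisely this last combinatorial accounting: one must verify that the two stages produce exactly the prefactors $(-1)^{t-1}2k$ and $(-1)^{t}k(k-1)$ stated in the lemma, and that the re-indexing after deleting $B_{i_{\sigma(t)}}^+$ interacts correctly with the sign $\sgn(\sigma)$ and with the multi-bracket antisymmetry on the remaining $k-1$ factors. A clean way to make this rigorous is to check the case $k=2$ by direct computation (as it already exhibits all three types of terms) and then to proceed by induction on $k$ using the standard multi-bracket recurrence $[X_1,\dots,X_m]=\sum_{s=1}^{m}(-1)^{s-1}X_s\,[X_1,\dots,\hat{X}_s,\dots,X_m]$ to reduce the general statement to the one-step moves just described.
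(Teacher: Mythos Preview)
The paper states Lemma~\ref{appA_lemm_technical} without proof, so there is no argument in the paper to compare against; your sketch is therefore being judged on its own merits. Your identification of the relevant relations is correct: \eqref{appA_eq_technical-4} follows exactly as you say from the derivation property and $[E_{ij},B_k^+]=\delta_{jk}B_i^+$, and for the other two identities the key inputs are indeed $[\{B_a^+,B_b^+\},B_c^+]=0$ and $\{B_i^-,B_l^+\}=2E_{li}$.

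There is, however, a genuine gap in your description of \eqref{appA_eq_technical-2}. Pushing $B_i^+$ through $k$ odd factors does give $(-1)^k[B_{i_1}^+,\dots,B_{i_k}^+]B_i^+$ plus anticommutator corrections, but the stated identity has $(-1)^{k+1}$ in front of that term. The corrections cannot therefore ``recombine into a scalar multiple of the longer multi-bracket'' as you claim: they must also supply a term $-2(-1)^k[B_{i_1}^+,\dots,B_{i_k}^+]B_i^+$. This is visible already at $k=1$, where the single correction $\{B_i^+,B_{i_1}^+\}$ is \emph{symmetric} in the two arguments and hence cannot be a multiple of the (ordinary, antisymmetric) commutator $[B_{i_1}^+,B_i^+]$. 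What actually happens is that the anticommutator terms, after using centrality, split into a piece proportional to the longer bracket \emph{and} a piece proportional to $[\dots]B_i^+$; the latter shifts the sign. Your induction suggestion at the end is the right way to make this precise: assume the identity for $k-1$, expand $[B_{i_1}^+,\dots,B_{i_k}^+]$ via the left-recurrence, apply the inductive hypothesis, and then use the left-expansion of $[B_{i_1}^+,\dots,B_{i_k}^+,B_i^+]$ to close the computation.

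For \eqref{appA_eq_technical-3} your two-stage outline is correct in spirit, but the verbal explanations of the coefficients $2k$ and $k(k-1)$ (``$k$ slot positions'', ``$k$ primary positions times $k-1$ subsequent factors'') do not survive scrutiny: the primary corrections carry position-dependent signs $(-1)^{t-1}$, and the secondary corrections replace a factor rather than deleting it, so one must track carefully how removing $B_{i_{\sigma(t)}}^+$ and re-indexing interacts with $\sgn(\sigma)$. Again the cleanest route is the induction you propose at the end: verify $k=2$ by hand and then use the multi-bracket recurrence to reduce $k$ to $k-1$.
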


\begin{prop}
Let $\lambda\in\mathbb{P}$ with $\ell(\lambda)\leq p$ and $j\in \{1,\dots,n\}$, then
\begin{equation}
d_{j}^+(\lambda)
	=
	\frac{(-1)^{\sum_{\alpha=j}^n(\alpha+1)(\lambda_\alpha-\lambda_{\alpha+1}+\delta_{\alpha j})}}{(\lambda_j+1)j}
	\bigg(
	\prod_{\ell=1}^{j-1}\frac{\lambda_j-\lambda_\ell-j+\ell+1}{\lambda_j-\lambda_\ell-j+\ell+[\lambda_j-\lambda_\ell]_2}
	\bigg)
	\end{equation}
and
\begin{equation}
d_{j}^-(\lambda)
	=
	\frac{(\lambda_j+1)j(\lambda_j+n-j+[\lambda_j]_2(p-n))}{(-1)^{\sum_{\alpha=j}^n(\alpha+1)(\lambda_\alpha-\lambda_{\alpha+1})}}
	\bigg(
	\prod_{\ell=j+1}^n\frac{\lambda_j-\lambda_\ell-j+\ell-1}{\lambda_j-\lambda_\ell-j+\ell-[\lambda_j-\lambda_\ell]_2}
	\bigg).
	\end{equation}
	\end{prop}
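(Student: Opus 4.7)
The scalars $d_j^\pm(\lambda)$ are defined by $(\mathbf{p}B_j^\pm)\Omega_\lambda = d_j^\pm(\lambda)\Omega_{\lambda\pm\epsilon_j}$; this identity is a consequence of Proposition~\ref{sec3_prop_highest-weight-vector} together with Lemma~\ref{sec4_lemm_raising-lowering-action-weight-spaces}, as already invoked in the proof of Proposition~\ref{sec4_prop_h.w.-action}. My plan is to evaluate the left-hand side directly, using the explicit realization
\[\Omega_\lambda=(\lambda_1!\cdots\lambda_n!)\,\omega_\lambda,\quad \omega_\lambda=C_n^{\lambda_n}C_{n-1}^{\lambda_{n-1}-\lambda_n}\cdots C_1^{\lambda_1-\lambda_2}v_0,\quad C_k:=[B_1^+,\ldots,B_k^+],\]
and then to extract the $\omega_{\lambda\pm\epsilon_j}$ component. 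The main tools are the multi-bracket propagation identities of Lemma~\ref{appA_lemm_technical} together with the explicit formulas for $\mathbf{p}B_j^\pm$ furnished by Theorem~\ref{sec4_theo_mickelsson-zhelobenko-generator}.

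I would begin with the base case $j=1$. Here $\mathbf{p}B_1^+=B_1^+$, and the $B_i^+$-identity of Lemma~\ref{appA_lemm_technical} collapses to $B_1^+C_k=(-1)^{k+1}C_kB_1^+$, because the would-be absorbed bracket $[B_1^+,\ldots,B_k^+,B_1^+]$ vanishes by antisymmetry. Thus $B_1^+$ commutes across $\omega_\lambda$ with total sign $\prod_{k=1}^n(-1)^{(k+1)(\lambda_k-\lambda_{k+1})}$ (with $\lambda_{n+1}:=0$) and appends an extra $B_1^+$ to the rightmost factor $(B_1^+)^{\lambda_1-\lambda_2}v_0$, turning $\omega_\lambda$ into $\omega_{\lambda+\epsilon_1}$. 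The factorial ratio $\Omega_{\lambda+\epsilon_1}=(\lambda_1+1)(\lambda_1!\cdots\lambda_n!)\omega_{\lambda+\epsilon_1}$ then yields $d_1^+(\lambda)=\prod_k(-1)^{(k+1)(\lambda_k-\lambda_{k+1})}/(\lambda_1+1)$, which matches the stated formula since the $\delta_{\alpha,1}$-correction in the exponent contributes the harmless even power $(-1)^{2}=1$.

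For general $j\geq 2$, I would feed the expansion of $\mathbf{p}B_j^+$ from Theorem~\ref{sec4_theo_mickelsson-zhelobenko-generator} into $\Omega_\lambda$. Each summand $E^{e_I}B_i^+\Omega_\lambda$ expands, via the $B_i^+$-identity of Lemma~\ref{appA_lemm_technical} and Lemma~\ref{sec3_lemm_u(n)-action-on-Omega}, into a highest-weight piece proportional to $\Omega_{\lambda+\epsilon_j}$ plus lower-weight debris. Lemma~\ref{sec4_lemm_raising-lowering-action-weight-spaces} forces the debris to cancel in the total sum, leaving $\mathbf{p}B_j^+\Omega_\lambda$ as a single scalar multiple of $\Omega_{\lambda+\epsilon_j}$. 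That scalar then assembles from (i) the signs produced by commuting each $B_i^+$ through the $C_k$ factors, (ii) the rational coefficients of the $\mathbf{p}B_j^+$-expansion specialized at $h_\ell=\lambda_\ell-\ell+1+p/2$, and (iii) the multinomial factors $\tfrac{2}{k+1}$ coming from the absorption step. The $-$ case proceeds in complete analogy, using the $B_i^-$-identity of Lemma~\ref{appA_lemm_technical} and the corresponding formula for $\mathbf{p}B_j^-$; the new factor $\lambda_j+n-j+[\lambda_j]_2(p-n)$ in $d_j^-(\lambda)$ tracks the extra $E_{tj}$ and $\delta_{jj_t}k(k-1)$ terms of the $B_i^-$-identity, which only contribute nontrivially once the chain of annihilators reaches $v_0$ and picks up the eigenvalue $p/2$ from $E_{jj}v_0$.

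The \textbf{main obstacle} will be collapsing the nested sum over the index set $\mathcal{I}_{ij}(s)$ into the closed-form product $\prod_{\ell=1}^{j-1}(\lambda_j-\lambda_\ell-j+\ell+1)/(\lambda_j-\lambda_\ell-j+\ell+[\lambda_j-\lambda_\ell]_2)$. I plan to handle this by induction on $j-i$ combined with a partial-fraction decomposition of the rational coefficient in the $\mathbf{p}B_j^\pm$-expansion; the parity symbol $[\lambda_j-\lambda_\ell]_2$ should emerge because the alternating signs $(-1)^{\lambda_k-\lambda_{k+1}}$ accumulated across commutations depend only on the parity of the exponents $\lambda_k-\lambda_{k+1}$.
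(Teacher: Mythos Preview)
Your route differs substantially from the paper's, and it has a genuine gap.

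The paper does not attempt to evaluate $\mathbf{p}B_j^\pm\Omega_\lambda$ by summing the expansion of Theorem~\ref{sec4_theo_mickelsson-zhelobenko-generator}. Instead it imports the Gel'fand--Zetlin matrix elements $c_j^\pm(\lambda)$ already computed in \cite{Lievens-Stoilova-VanderJeugt-2008}. Writing $\Omega_\lambda=\kappa(\lambda)\,|m_\lambda)$ for the normalized GZ highest-weight vector gives $d_j^\pm(\lambda)=\frac{\kappa(\lambda)}{\kappa(\lambda\pm\epsilon_j)}c_j^\pm(\lambda)$, and hence $d_j^+(\lambda)\,d_j^-(\lambda+\epsilon_j)=c_j^+(\lambda)^2$ since $c_j^-(\lambda+\epsilon_j)=c_j^+(\lambda)$. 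The paper then (i) reduces $d_j^+(\lambda)$ to $d_j^+(\mu)$ for the auxiliary partition $\mu=\sum_{\ell<j}(\lambda_\ell-\lambda_j)\epsilon_\ell$, by commuting $B_j^+$ past the $C_k$'s with $k\geq j$ using only signs; (ii) computes $d_j^-(\mu+\epsilon_j)=(-1)^{j+1}j(p-j+1)$ directly from Lemma~\ref{appA_lemm_technical}, which is easy because $\omega_{\mu+\epsilon_j}$ contains a single factor $C_j$; and (iii) recovers $d_j^+(\mu)=c_j^+(\mu)^2/d_j^-(\mu+\epsilon_j)$, then $d_j^-(\lambda)=c_j^-(\lambda)^2/d_j^+(\lambda-\epsilon_j)$. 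The closed-form product you are trying to manufacture thus comes ready-made from the known $c_j^\pm$, not from collapsing any $\mathcal{I}_{ij}$-sum.

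Here is the gap in your plan. Every term $E^{e_I}B_i^+\Omega_\lambda$ with $i<j$ in the expansion of $\mathbf{p}B_j^+$ has \emph{zero} $\Omega_{\lambda+\epsilon_j}$-component: by weight considerations $B_i^+\Omega_\lambda$ lies in $\bigoplus_{k\leq i}V(\lambda+\epsilon_k+\tfrac p2)$ (the weight $\lambda+\epsilon_i$ cannot occur in $V(\lambda+\epsilon_j+\tfrac p2)$ when $i<j$), and applying $E^{e_I}\in U(\mathfrak{t}^-)$ keeps it there. So your declared ``main obstacle''---collapsing the nested $\mathcal{I}_{ij}(s)$-sum by partial fractions---is a phantom; those terms contribute nothing. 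The entire coefficient $d_j^+(\lambda)$ equals the highest-weight component of $B_j^+\Omega_\lambda$ alone, and this is where the real difficulty sits. Your outline does not address it: once $B_j^+$ is commuted past $C_n^{\lambda_n-\lambda_{n+1}}\cdots C_j^{\lambda_j-\lambda_{j+1}}$ (pure signs) and enters $C_{j-1}^{\lambda_{j-1}-\lambda_j}$, the absorption terms from \eqref{appA_eq_technical-2} produce factors $C_j$ trapped \emph{between} copies of $C_{j-1}$, and further commutations past $C_k$ with $k<j-1$ create non-standard brackets $[B_1^+,\ldots,B_k^+,B_j^+]$ with a gap in the index set. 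Re-expressing the resulting vector in the basis $\{\Omega_B\}$ and isolating the $\Omega_{\lambda+\epsilon_j}$-coefficient is precisely the step the paper circumvents by invoking the GZ matrix elements.
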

\begin{proof}
The paper \cite{Lievens-Stoilova-VanderJeugt-2008} presents a GZ-basis for the $\mathfrak{osp}(1|2n)$-module $L(p)$.
In this basis the $\mathfrak{gl}(n)$-highest weight vectors are the basis vectors $|m_\lambda)$, for $\lambda\in\mathbb{P}$ with $\ell(\lambda)\leq p$, whose corresponding GZ-pattern has entries 
\begin{equation}
m_{ij}=\lambda_i, \quad (1\leq i \leq j\leq n),
	\end{equation}
in the notation of the paper.	

For any $\lambda\in\mathbb{P}$ with $\ell(\lambda)\leq p$, the vector $|m_\lambda)$ is non-zero and has weight $\lambda+\frac{p}{2}$, so by Proposition \ref{sec3_prop_highest-weight-vector} there must exist a non-zero coefficient $\kappa(\lambda)\in \C$ such that 
\begin{equation}
\Omega_\lambda = \kappa(\lambda)|m_\lambda), \quad  (\lambda\in\mathbb{P}, \ell(\lambda)\leq p).
	\end{equation}	
Making the choice $|m_0)=1=v_0$, it follows that $\kappa(0)=1$. Here $0\in\mathbb{P}$ is the empty partition.

For any $j\in\{1,\dots,n\}$ we denote the coefficients of the vectors $|m_{\lambda\pm\epsilon_j})$ in the GZ-basis expansions of $B_j^\pm|m_\lambda)$ by $c_{j}^\pm(\lambda)$. 
On the other hand, by using the expansions \eqref{sec4_eq_h.w.-action-1} and \eqref{sec4_eq_h.w.-action-2} we can see that the result of applying the projection $L(p)\to L(p)^+$ to $B_j^\pm\Omega_\lambda$ is $d_{j}^\pm(\lambda)\Omega_{\lambda\pm\epsilon_j}$. Here $L(p)^+$ is defined as in Section \ref{sec4.2}.
This implies that $d_{j}^\pm(\lambda)$ is the coefficient of $\Omega_{\lambda\pm\epsilon_j}$ in the expansion of $B_j^\pm\Omega_\lambda$ into a linear combination of elements of the basis $\{E^{\gamma_A}\Omega_{\lambda_A}\}$.
From this it follows that 
\begin{equation}
\label{appA_eq_coefficient-identity}
d_{j}^\pm(\lambda)=\frac{\kappa(\lambda)}{\kappa(\lambda\pm \epsilon_j)}c_{j}^\pm(\lambda),
	\end{equation}
for all $j\in\{1,\dots,n\}$ and $\lambda\in\mathbb{P}$ with $\ell(\lambda)\leq p$.

Using the matrix elements calculated in \cite{Lievens-Stoilova-VanderJeugt-2008}, we get for any $j\in\{1,\dots,n\}$ and $\lambda\in\mathbb{P}$ with $\ell(\lambda)\leq p$, that
\begin{equation}
c_{j}^+(\lambda)
	=
	\bigg(
	\prod_{\ell=1}^{j-1}\frac{\lambda_j-\lambda_\ell-j+\ell+1}{\lambda_j-\lambda_\ell-j+\ell}
	\bigg)^{\frac{1}{2}}
	F_j(\lambda_1,\dots,\lambda_n)
	\end{equation}
and 
\begin{equation}
c_{j}^-(\lambda)
	=
	\bigg(
	\prod_{\ell=1}^{j-1}\frac{\lambda_j-\lambda_\ell-j+\ell}{\lambda_j-\lambda_\ell-j+\ell-1}
	\bigg)^{\frac{1}{2}}
	F_j(\lambda_1,\dots,\lambda_{j-1},\lambda_j-1,\lambda_{j+1}\dots,\lambda_n),
	\end{equation}
where
\begin{equation}
F_j(\lambda_1,\dots,\lambda_n)
	=(-1)^{\lambda_{j+1}+\cdots+\lambda_n}
	(\lambda_j+n+1-j+[\lambda_j+1]_2(p-n))^{\frac{1}{2}}
	\bigg(
	\prod_{\substack{\ell=1,\\ \ell\neq i}}^n\frac{\lambda_j-\lambda_\ell-j+\ell}{\lambda_j-\lambda_\ell-j+\ell+[\lambda_j-\lambda_\ell]_2}
	\bigg)^{\frac{1}{2}}.
	\end{equation}
At this point we note that $c_{j}^-(\lambda)=c_{j}^+(\lambda-\epsilon_j)$. This observation will be valuable later on. For now we begin the initial steps in the calculation of $d_{j}^+(\lambda)$.
To help with these calculation we introduce the partition
\begin{equation}
\mu:= \sum_{\ell=1}^{j-1} (\lambda_\ell-\lambda_j)\epsilon_\ell.
	\end{equation}
	
Recall that $\Omega_\lambda=\lambda!\omega_\lambda=\lambda!\omega_{D(\gamma_\lambda)}$, where $D(\gamma_\lambda)$ is the s.s. Young tableau of shape $\lambda$ with entries $(D(\gamma_\lambda))(k,l)=k$, for $(k,l)\in\lambda$. Together \eqref{sec3_eq_def-omega_A} and \eqref{appA_eq_technical-1} then imply that
\begin{equation}
\label{appA_eq_omega_lambda-omega_mu}
\begin{split}
\Omega_\lambda
	=\lambda!\omega_\lambda
	&=
	\lambda!
	[B_1^+,\dots,B_n^+]^{\lambda_n-\lambda_{n+1}}\cdots [B_1^+,B_2^+]^{\lambda_2-\lambda_3}(B_1^+)^{\lambda_1-\lambda_2}v_0
	\\&=
	\frac{\lambda!}{\mu!}
	[B_1^+,\dots,B_n^+]^{\lambda_n-\lambda_{n+1}}\cdots[B_1^+,\dots,B_j^+]^{\lambda_j-\lambda_{j+1}}\Omega_\mu.
	\end{split}
	\end{equation}

Using the expression for $B_j^+\Omega_\lambda$ given in Proposition \ref{sec4_prop_h.w.-action} together with the identity \eqref{appA_eq_omega_lambda-omega_mu} and the formula \eqref{appA_eq_technical-2} one can obtain the following identity

\begin{equation}
\begin{split}
B_j^+\Omega_\lambda
	=
	\sum_{i=1}^j\sum_{s=1}^{j-i+1}\sum_{I\in\mathcal{I}_{ij}(s)}
	(-1)^{\sum_{\alpha=j}^n(\alpha+1)(\lambda_\alpha-\lambda_{\alpha+1})}
	\frac{\lambda_i-\lambda_j+1}{\lambda_i+1}
	d_I^+(\mu)
	\Omega_{\lambda+\epsilon_i},
	\end{split}
	\end{equation}
where
\begin{equation}
d_I^+(\mu)=d_{i}^+(\mu)\frac{\prod_{\ell\in I^\complement} (\mu_i-\mu_\ell-i+\ell+1)}{\prod_{\ell=i+1}^{j} (\mu_i-\mu_\ell-i+\ell)}.
	\end{equation}
This means in particular that 
\begin{equation}
d_{j}^+(\lambda)=\frac{(-1)^{\sum_{\alpha=j}^n(\alpha+1)(\lambda_\alpha-\lambda_{\alpha+1})}}{\lambda_j+1}d_{j}^+(\mu).
	\end{equation}
To calculate $d_{j}^+(\lambda)$ it is therefore enough to calculate $d_{j}^+(\mu)$. 
Recalling \eqref{appA_eq_coefficient-identity} and using that $c_{j}^-(\mu+\epsilon_j)=c_{j}^+(\mu)$ we get
\begin{equation}
d_{j}^+(\mu)
	= 
	\frac{\kappa(\mu)}{\kappa(\mu+\epsilon_j)} c_{j}^+(\mu)
	=
	\frac{c_{j}^-(\mu+\epsilon_j)}{d_{j}^-(\mu+\epsilon_j)} 
	c_{j}^+(\mu)
	=
	\frac{c_{j}^+(\mu)^2}{d_{j}^-(\mu+\epsilon_j)} 
	\end{equation}

Using \eqref{appA_eq_technical-3} the following calculation gives us $d_{j}^-(\mu+\epsilon_j)$:
\begin{equation}
\begin{split}
B_j^-\Omega_{\mu+\epsilon_j}
	&=
	(\mu+\epsilon_j)!
	B_j^-[B_1^+,\dots,B_j^+][B_1^+,\dots,B_{j-1}^+]^{(\mu+\epsilon_j)_{j-1}-(\mu+\epsilon_j)_j}\cdots (B_1^+)^{(\mu+\epsilon_j)_1-(\mu+\epsilon_j)_2}v_0
	\\&=
	(\mu+\epsilon_j)!
	(-1)^{j+1}j(p-j+1)
	[B_1^+,\dots,B_{j-1}^+]^{\mu_{j-1}-\mu_j}\cdots (B_1^+)^{\mu_1-\mu_2}v_0
	\\&=
	\frac{(\mu+\epsilon_j)!}{\mu!}
	(-1)^{j+1}j(p-j+1)
	\Omega_\mu
	\\&=
	(-1)^{j+1}j(p-j+1)
	\Omega_\mu,
	\end{split}
	\end{equation}
so 
\begin{equation}
d_{j}^-(\mu+\epsilon_j)=(-1)^{j+1}j(p-j+1).
	\end{equation}
We now have everything we need to calculate $d_{j}^+(\lambda)$:
\begin{equation}
\begin{split}
d_{j}^+(\lambda)
	&=
	\frac{(-1)^{\sum_{\alpha=j}^n(\alpha+1)(\lambda_\alpha-\lambda_{\alpha+1})}}{\lambda_j+1}
	d_{j}^+(\mu)
	\\&=
	\frac{(-1)^{\sum_{\alpha=j}^n(\alpha+1)(\lambda_\alpha-\lambda_{\alpha+1})}}{\lambda_j+1}
	\frac{c_{j}^+(\mu)^2}{d_{j}^-(\mu+\epsilon_j)}
	\\&=
	\frac{(-1)^{\sum_{\alpha=j}^n(\alpha+1)(\lambda_\alpha-\lambda_{\alpha+1}+\delta_{\alpha j})}}{(\lambda_j+1)j(p-j+1)}c_{j}^+(\mu)^2
	\\&=
	\frac{(-1)^{\sum_{\alpha=j}^n(\alpha+1)(\lambda_\alpha-\lambda_{\alpha+1}+\delta_{\alpha j})}}{(\lambda_j+1)j(p-j+1)}
	\bigg(
	\prod_{\ell=1}^{j-1}\frac{\mu_j-\mu_\ell-j+\ell+1}{\mu_j-\mu_\ell-j+\ell}
	\bigg)
	\times
	\\&\quad \times
	(\mu_j+n+1-j+[\mu_j+1]_2(p-n))^{\frac{1}{2}}
	\bigg(
	\prod_{\substack{\ell=1,\\ \ell\neq j}}^n\frac{\mu_j-\mu_\ell-j+\ell}{\mu_j-\mu_\ell-j+\ell+[\mu_j-\mu_\ell]_2}
	\bigg)
	\\&=
	\frac{(-1)^{\sum_{\alpha=j}^n(\alpha+1)(\lambda_\alpha-\lambda_{\alpha+1}+\delta_{\alpha j})}}{(\lambda_j+1)j}
	\bigg(
	\prod_{\ell=1}^{j-1}\frac{\lambda_j-\lambda_\ell-j+\ell+1}{\lambda_j-\lambda_\ell-j+\ell+[\lambda_j-\lambda_\ell]_2}
	\bigg).
	\end{split}
	\end{equation}
Similarly we can calculate $d_{j}^-(\lambda)$ as
\begin{equation}
\begin{split}
d_{j}^-(\lambda)
	&=
	\frac{c_{j}^-(\lambda)^2}{d_{j}^+(\lambda-\epsilon_j)}
	\\&=
	\frac{(\lambda_j+1)j(\lambda_j+n-j+[\lambda_j]_2(p-n))}{(-1)^{\sum_{\alpha=j}^n(\alpha+1)(\lambda_\alpha-\lambda_{\alpha+1})}}
	\bigg(
	\prod_{\ell=j+1}^n\frac{\lambda_j-\lambda_\ell-j+\ell-1}{\lambda_j-\lambda_\ell-j+\ell-[\lambda_j-\lambda_\ell]_2}
	\bigg).
	\end{split}
	\end{equation}
	\end{proof}

%

\section*{Acknowledgements}
The authors were supported by the EOS Research Project 30889451. The authors would like to thank Prof. Hendrik De Bie for valuable discussions and Alexis Langlois-Rémillard introducing us to the symbolic manipulation system FORM.

	\end{document}